\newcommand{\R}{\mathbb{R}}
\newcommand{\Rex}{\overline{\mathbb{R}}}
\DeclareMathOperator{\ri}{ri}
\let\epsilon\varepsilon
\newcommand{\N}{\mathbb{N}}
\DeclareMathOperator{\cl}{cl}
\DeclareMathOperator{\co}{co}
\DeclareMathOperator{\lin}{lin}
\DeclareMathOperator{\cco}{\overline{co}}
\DeclareMathOperator{\aff}{aff}
\DeclareMathOperator{\epi}{epi}
\DeclareMathOperator{\grafo}{gph}
\DeclareMathOperator{\dom}{dom}
\DeclareMathOperator{\inte}{int}
\DeclareMathOperator{\spn}{span}
\DeclareMathOperator{\sub}{\partial}
\begin{document}

\title{Qualification conditions-free characterizations of the $\varepsilon$-subdifferential of convex integral
	functions\thanks{This work is partially supported by CONICYT grants: Fondecyt projects Nº 1151003, 1190012, 1190110, and 1150909, Proyecto/Grant PIA AFB-170001, CONICYT-PCHA/doctorado Nacional/2014-21140621.}}
%\subtitle{}

\titlerunning{Subdifferential of  convex integral functionals}        % if too long for running head

\author{Rafael Correa, Abderrahim Hantoute, Pedro P\'erez-Aros   
}

%\authorrunning{Short form of author list} % if too long for running head
%
%\institute{Universidad de O'Higgins and DIM-CMM of Universidad de Chile
%	\email{rafael.correa@uoh.com}            \\
%	Center for Mathematical Modeling \at \\
%	Universidad de Chile\\
%	Instituto de Ciencias de la Ingenier\'ia\at 
%	\\Universidad de O'higgins \email{ahantoute@dim.uchile.cl}  
	
	%             \emph{Present address:} of F. Author  %  if needed
%}

%\authorrunning{Short form of author list} % if too long for running head

%\authorrunning{Short form of author list} % if too long for running head

\institute{Rafael Correa,	Universidad de O'Higgins and DIM-CMM of Universidad de Chile\\
		\email{rcorrea@dim.uchile.cl}            \\
	Abderrahim Hantoute,	Center for Mathematical Modeling  \\
		Universidad de Chile\\ 
		\email{ahantoute@dim.uchile.cl}  \\
	Pedro P\'erez-Aros,	Instituto de Ciencias de la Ingenier\'ia 
	\\Universidad de O'higgins \\
              \email{pedro.perez@uoh.com}           %  \\
%             \emph{Present address:} of F. Author  %  if needed
}

\date{Received: date / Accepted: date}
% The correct dates will be entered by the editor

\maketitle

\begin{abstract}
We provide formulae for the  $\varepsilon$-subdifferential of the integral function 
$
I_f(x):=\int_T f(t,x) d\mu(t),
$
where the integrand $f:T\times X \to \Rex$ is measurable in $(t,x)$ and convex in $x$. The state variable lies in a locally  convex space, possibly non-separable, while $T$ is given a structure of a nonnegative complete $\sigma$-finite measure space $(T,\mathcal{A},\mu)$. The resulting characterizations are given in terms of the $\epsilon$-subdifferential of  the data functions involved in the integrand, $f$, without requiring any qualification conditions. We also derive new formulas when some usual continuity-type conditions are in force. These results are new even for the finite sum of convex functions and for the finite-dimensional setting.

\keywords{Normal integrands, convex integral functionals,  conjugate functions, subdifferential and $\epsilon$-subdifferential.}
% \PACS{PACS code1 \and PACS code2 \and more}
% \subclass{MSC code1 \and MSC code2 \and more}
\end{abstract}
\section{Introduction}

Several problems in applied mathematics such as calculus of variations, optimal control theory and  stochastic programming, among others, rely on the study of integral functions and functionals given by  the following expression
\begin{equation}\label{DefinitionIntegrand}
	\displaystyle x(\cdot) \in \mathfrak{X} \to \hat{I}_f(x(\cdot)):=\int_T
	f(t,x(t)) d\mu(t),
\end{equation}
for a functional space $\mathfrak{X} $ and an integrand function $f$ that are given by the data of the underlying system. 
Problems which consider this class of functionals  represent a tremendous territory for developing variational
analysis, and indeed it is especially under this class of problems where the theory has traditionally been organized. Models  which consider integrals  with respect
to time are common in the study of dynamical systems and problems of optimal
control. Also, when the problem involves uncertainty, as in stochastic programming problems, the design of such mathematical models is represented by a probability space, so that the problem is modeled using the integral expression.  Applications to stochastic programming problems often concern the study of density distributions, which can also be presented under integration with respect to the Lebesgue measure.

In classical studies, such as in the calculus of variations, the integrand $f(t, x)$ is usually supposed to be continuous in $t$ and $x$, jointly, or even with some order of differentiability. Later, integrands with finite values and  satisfying the Carath\'eodory condition, that is, continuity in $x$ and measurability in $t$, have been considered. It can be easily noticed in all of these cases that for every measurable function $x(\cdot)$, the function $t \to f(t,x(t))$ is at least measurable and, hence, \eqref{DefinitionIntegrand} can be  well-defined using the convention adopted for the extended-real line. However, new mathematical models, especially the emergence of the modern control theory, lead to the consideration of general integrands with possibly infinite values. In this way, important kinds of constraints can most efficiently be represented. Such integrands require a distinctly new theoretical approach, where questions of measurability, meaning of the integral and the existence of measurable selections are prominent and are reflected in the  concept of normal integrands.

As is traditional in optimization and generally in variational analysis, one could replace the continuity of $f_t:=f(t,\cdot)$ with the weaker property of lower semicontinuity, but maintaining the measurability of $f$ with respect to $t$. Nevertheless, this is not enough to ensure the measurability of   $t \to f(t,x(t))$ for any measurable function $x(\cdot)$. As an example, put $T=[0,1]$ and let $\mathcal{A}$ be the $\sigma$-Algebra of Lebesgue measurable sets in $[0,1]$. If $D$ is a non-measurable set in $[0,1]$, and 
$$
f(t,x):=0 \text{ if }t=x\in D \text{, and }f(t,x):=1 \text{ otherwise},
$$ 
then the measurability and the lower semicontinuity of $f$ hold trivially. However, for $x(t)=t$ we lack the measurability of the function $t \to f(t,x(t))$. This example shows that although the lower semicontinuity assumption in $x$ is certainly right, the assumption of measurability in $t$ for each fixed $x$ is not adequate.
The way out of this impasse was found by Rockafellar \cite{MR0236689}, using the concept of normal convex integrands, which is an equivalent definition to the one presented  in Section \ref{Preliminaryresults} (see, e.g., \cite[Proposition 2D]{MR0512209}, or \cite[Proposition 14.39]{MR1491362}). Such integrands are such that $f(t, \cdot)$ is proper and lower semi-continuous for each $t$, and there exists a countable collection $U$ of measurable functions $u$ from $T$ to $\R^n$ with the following properties: \textbf{(a)} for each $u\in U$, $f(t,u(t))$ is measurable in $t$; \textbf{(b)} for each $t$, $U_t \cap \dom f_t$ is dense in $\dom f_t$, where $U_t=\{ u(t): u\in U \}$.

The notion of convex normal integrands provides the link that allows the connection of measurable multifunctions and  the subdifferential theory.  The preservation of the measurability  of multifunctions under a broad variety of operations,  including countable intersections and unions, sums, Painlev\'e-Kuratowski  limits, and so on, as well as the validity of Castaing's representation theorem (\cite{MR0223527,MR0467310}), have made this theory very popular in various problems of applied mathematics. Castaing's representation theorem is intrinsically related  to the possibility of extending the definition of the classical integration to the one of set-valued mappings, using measurable and integrable selections. In our case, we shall deal with the subdifferential mapping to get similar results as in the Leibniz integral rule.

Some of the classical results on integral functions and functionals can be found in Castaing-Valadier \cite{MR0467310}, Ioffe-Levin \cite{MR0372610}, Ioffe-Tikhomirov \cite{0036-0279-23-6-R02}, Levin \cite{0036-0279-30-2-R03} and Rockafellar \cite{MR0236689,MR0310612,MR0390870}. Other recent works are Borwein-Yao \cite{MR3235316},  Ioffe \cite{MR2291564}, Lopez-Thibault \cite{MR2444461}, and Mordukhovich-Sagara \cite{BorisINTErgal}  among others. A summary of the theory of measurability and integral functionals in finite-dimensions can be found in \cite{MR0512209,MR1491362}, and in \cite{MR0467310,MR0499053,MR0486391,MR577971} for infinite-dimensional  spaces.

The aim of this research is  to give formulae for the $\epsilon$-subdifferential of the convex integral function $I_f$, given by 
\begin{equation}\label{DefinitionIntegrand2}
	\displaystyle x\in X \to I_f(x):=\int_T
	f(t,x) d\mu(t),
\end{equation}
that is, when the space  $\mathfrak{X}$ in  \eqref{DefinitionIntegrand}  is the space of constant functions. This particular case is also known as the continuous sum. A well-known formula, given under certain continuity assumptions by Ioffe-Levin \cite{MR0372610} for the finite-dimensional setting, establishes that 
\begin{equation}\label{equationIoffelevinchapter4}
	\sub I_f(x)=\int_T \sub f_t(x) d\mu(t) + N_{\dom I_f}(x), \text{ for all } x\in X,
\end{equation} 
where the set $\int_T \sub f_t(x) d\mu(t)$ is understood in the sense of Aumman's integral (see  Definition \ref{AUMMANINTEGRAL}). One can compare \eqref{equationIoffelevinchapter4} with its discrete (finite) counterpart, which states that for every two proper convex functions $f_1$ and $f_2 $ such that $f_1$ is continuous at some point of $\dom f_1 \cap \dom f_2$ one has  
$$
\sub (f_1 + f_2)(x)=\sub f_1(x) + \sub f_2(x).
$$
However, in the absence of continuity assumptions, Hiriart-Urruty and Phelps formula ensures that  (see, e.g., \cite{MR1330645})
$$
\sub (f_1 + f_2)(x)=\bigcap_{\varepsilon> 0}  \cl(\sub_{\varepsilon} f_1(x) + \sub_{\varepsilon} f_2(x)),
$$
provided that $f_1$ and $f_2$ are lower semi-continuous proper convex functions.
At this step, a natural question is to give similar formulae for the subdifferential of integral functions as the last one above. It feels natural to think about a generalization of \eqref{equationIoffelevinchapter4} in the following form,
\begin{equation}\label{equationIoffelevin2chapter4}
	\sub I_f(x)=\bigcap\limits_{\eta > 0} \cl \bigg\{ \int_T \sub_\eta f_t(x) d\mu(t) + N_{\dom I_f}(x)\bigg\},
\end{equation} 
but unfortunately such an expression does not hold in general. Formula (\ref{equationIoffelevin2chapter4}) may fail even for smooth integrands, where $\sub I_f(x)$ is nonempty and at the same time $\int_T \sub_{\eta} f_t(x) d\mu(t) $ is empty (see Example \ref{ejemploimportante}).

Our main goal in this paper is to provide general formulae for the (exact) subdifferential and the $\epsilon$-subdifferential of the convex integral functional  $I_f$, defined in an arbitrary locally convex space, in the absence of qualification conditions. Other characterizations given under appropriate continuity conditions are investigated in \cite{INTECONV2}, while the nonconvex case is treated in \cite{INTENONCONV}. 
%Simpler formulas For sake of brevity, we have divided  the investigation  of the subdifferential of the convex integral functional $I_f$ into two papers (see \cite{INTECONV2} for the second part). We also have investigated the nonconvex integral functionals given by the form of \eqref{DefinitionIntegrand2}  (see \cite{INTENONCONV} for more details).

The rest of the paper is organized as follows: In  Section \ref{Notation}, we summarize the notation which is needed in the sequel. In Section \ref{Preliminaryresults}, we give some definitions and preliminary results  of vector integration, measurable multifunctions, measurable selections and integral of multifunctions, which are used to study  the subdifferential of the integral functional  $I_f$.  In Section \ref{formulaeconvexfunctionals}, we present our main formulae, which characterize the $\varepsilon$-subdifferential of the integral functional without any qualification conditions (see Theorem \ref{teorema2}). In this result, we explore the finite-dimensional reduction approach (see, e.g., \cite{MR3561780,MR2448918,perezsub,perezsub2} and the references therein).  Later on, we provide  corollaries and simplifications of our main formulae under some qualification conditions of the data (see Corollary \ref{corollaryexactsum}). General formulae for the discrete sum are derived from Theorem \ref{teorema2}. Finally, in   Section \ref{SUSLINSPACES}, we use calculus rules for the $\varepsilon$-subdifferential to get tighter formulae in the setting of Suslin spaces. We also consider there the case of countably discrete measure space to study the subdifferential of series of  convex functions (see, e.g., \cite{MR3571567}).

\section{Notation}

\label{Notation}

In this section, we give the main notation and definitions that will be used
in the sequel (see, e.g., \cite{moreau1967fonctionnelles,MR1451876,MR1921556,MR0467080,MR2184742,MR2596822,MR1865628}). 

We denote by $(X,\tau _{X})$ a Hausdorff (separated) locally convex space (lcs, for short), endowed with a given (initial) topology $\tau _{X}$, and by $X^{\ast }$ a locally convex topological dual of $X$ with respect to a given bilinear form $\langle \cdot ,\cdot \rangle :X^{\ast
}\times X\rightarrow \mathbb{R}$, defined as 
$$\langle x^{\ast },x\rangle :=\langle
x,x^{\ast }\rangle :=x^{\ast }(x).
$$
Examples of the locally convex topology $\tau _{X^{\ast }}$ that will be considered on $X^*$ are the
weak$^{\ast }\ $topology $w(X^{\ast },X)$ ($w^{\ast },$ for short) and  the Mackey topology 
$\tau (X^{\ast },X)$.  These two topologies delimit the set of compatible topologies for the pair $(X,X^*)$. Another  topology on $X^*$  (possibly not compatible for the cited pair) that we will use in the sequel is the strong topology $\beta (X^{\ast
},X).$ Bounded sets in $X^*$ with respect to the $w^{\ast }$ or $\beta (X^{\ast
},X)$ topologies are the same. The convergence with respect to the weak$^{\ast }\ $topology is denoted by $\rightharpoonup $. 
We omit the reference to the topology $\tau _{X}$, and simply write $X$ when no confusion occurs.

For a point $x\in X$, $\mathcal{N}_{x}(\tau _{X})$ represents the family of convex and symmetric
neighborhoods of $x$ with respect to the topology 
$\tau _{X}$. We omit the reference to the
used topology and write  $\mathcal{N}_{x}$ when there is no confusion. 
We will write $\overline{\mathbb{R}}:=\mathbb{R}\cup \{-\infty
,\infty \}$ and adopt the following conventions, 
$$
0\cdot \infty =0=0\cdot (-\infty
) \text{ and }\infty +(-\infty )=(-\infty )+\infty =\infty .
$$ 
For a seminorm $\rho :X\rightarrow \mathbb{R}$, $x\in X,$ and $r>0$, we denote 
$$
B_{\rho
}(x,r):=\{y\in X:\rho (x-z)\leq r\}.
$$ 
For a given function $f:X\rightarrow \overline{\mathbb{R}}$, the (effective) 
\emph{domain} of $f$ is 
$$\dom f:=\{x\in X\mid f(x)<+\infty \}.
$$ We
say that $f$ is \emph{proper} if $\dom f\neq \emptyset $ and $%
f>-\infty $, and \emph{inf-compact} if for every $\lambda \in \mathbb{R}$
the sublevel set $[f\leq \lambda ]:=\{x\in X\mid f(x)\leq \lambda \}$ is
compact. We denote by $\Gamma _{0}(X)$ the class of proper lower
semicontinuous (lsc) convex functions on $X$. The \emph{conjugate} of $f$ is
the function $f^{\ast }:X^{\ast }\rightarrow \overline{\mathbb{R}}$ defined
by 
\begin{equation*}
	f^{\ast }(x^{\ast }):=\sup_{x\in X}\{\langle x^{\ast },x\rangle -f(x)\},
\end{equation*}%
and the \emph{biconjugate} of $f$ is the function $f^{\ast \ast }:X\rightarrow \overline{\mathbb{R}},$ defined as the restriction of $(f^{\ast })^{\ast
}$ to $X$. For $\varepsilon \geq 0$, the $%
\varepsilon $-\emph{subdifferential} of $f$ at a point $x\in X$ where it is
finite is the set 
\begin{equation*}
	\partial _{\varepsilon }f(x):=\{x^{\ast }\in X^{\ast }\mid \langle x^{\ast
	},y-x\rangle \leq f(y)-f(x)+\varepsilon ,\ \forall y\in X\};
\end{equation*}%
if $f(x)$ is not finite, we set $\partial _{\varepsilon }f(x):=\emptyset $.
For a closed linear subspace $F$ of $X,$ $\cco_{F}f:F\rightarrow \overline{\mathbb{R}}$ is the closed convex hull of $f_{|_F}$, the restriction of $f$ to  $F$. 

The \emph{indicator} and the \emph{support} functions of a set $A\subseteq X$ are, respectively, 
\begin{equation*}
	\delta _{A}(x):=%
	\begin{cases}
		0\qquad & x\in A \\ 
		+\infty & x\notin A,%
	\end{cases}%
	\qquad \qquad \sigma _{A}:=\delta _{A}^{\ast }.
\end{equation*}%
The \emph{inf-convolution} of $f,g:X\rightarrow \overline{\mathbb{R}}$ is
the function 
$$f\square g:=\inf\limits_{z\in X}\{f(z)+g(\cdot -z)\};$$ it is
said to be exact at $x$ if there exists $z$ such that $f\square
g(x)=f(z)+g(x-z)$.

For a set $A\subseteq X$, we denote by $\inte(A)$, $\overline{A}$ (or $\cl A $), $\co(A)$, 
and $\cco(A)$, the \textit{interior}, the \textit{%
	closure}, the \emph{convex hull}, and the \emph{closed convex hull} of the set $A$, respectively. The %
\emph{linear} and the \emph{affine hulls} of $A$ are respectively defined by  
$$
\lin(A):=\spn\{A\}:=\cap\{F\subset X: A\subset F, \ F\text{ is linear}\},
$$
$$
\aff(A):=\cap\{F\subset X: A\subset F, \ F\text{ is affine}\}.
$$
The \emph{relative interior} of $A$, denoted by $\ri(A)$, is the interior of  $A$ with respect to the trace (or induced) topology on $\aff(A)$, if $\aff(A)$ is closed, and the empty set otherwise. Hence, when $X\equiv \mathbb{R}^n$, $\ri(A)$ is the usual relative interior (see \cite{MR1451876}). 
%The \emph{relative interior} of $A$ with respect to an 
%affine subspace $F$, denoted by $\ri_F(A)$, is the interior of  $A\cap F$ with respect to the relative topology on $F$, if $F$ is closed, and the empty set otherwise. By the symbol $\ri(A)$ we denote $\ri_{\aff(A)}$. 
The \emph{polar} of $A$ is
the set 
\begin{equation*}
	A^{o}:=\{x^{\ast }\in X^{\ast }\mid \langle x^{\ast },x\rangle \leq
	1,\forall x\in A\},
\end{equation*}%
and the \emph{recession cone} of $A$, when $A$ is  convex and closed, is the set 
\begin{equation*}
	A_{\infty }:=\{u\in X\mid x+\lambda u\in A
	\text{ for all }\lambda \geq 0\},
\end{equation*}%
where $x$ is any fixed point in $A$. Finally, the $\varepsilon $-normal set of $A$ at $x$ is $\emph{N}_{A}^{\varepsilon
}(x):=\partial _{\varepsilon }\delta _{A}(x)$.

\section{Preliminary results}\label{Preliminaryresults}

In what follows, the lcs space $(X,\tau _{X})$ and its topological dual $X^{\ast }$ are defined as in Section \ref{Notation}. 

A Hausdorff topological space $S$ is said to be a Suslin space if there
exist a Polish (i.e., separable completely metrizable) space $P$ and a
continuous surjection from $P$ to $S$ (see \cite%
{MR0027138,MR0467310,MR0426084}); hence, $S$ does not need to be metrizable. For example, if $X$ is a separable Banach
space, then $(X,\left\Vert \cdot \right\Vert )$ and $(X^{\ast },w^{\ast })$
are Suslin spaces; indeed, the unit  ball in $X^{\ast }$ is weak*-separable and weak*-metrizable in that case.

Let $(T,\Sigma ,\mu )$ be a nonnegative complete $\sigma $-finite measure space, and let $L^{1}(T,\mathbb{R})$ denote the usual $L^{1}(T)$ space of equivalence classes of (scalar) Lebesgue integrable functions. Given a
function $f:T\rightarrow \overline{\mathbb{R}}$, we denote 
\begin{equation*}
	\mathcal{D}_{f}:=\{g\in L^{1}(T,\mathbb{R}):f(t)\leq g(t)\;\mu \text{-almost
		everywhere}\}.
\end{equation*}%
\begin{definition}\label{D1}
We define the upper integral of $f$ by 
\begin{equation}
	\int_{T}^{*}f(t)d\mu (t):=\inf_{g\in \mathcal{D}_{f}}\int_{T}g(t)%
	d\mu (t),  \label{defint}
\end{equation}%
whenever $\mathcal{D}_{f}\neq \emptyset $, and $\int_{T}^{*}f(t)d\mu (t):=+\infty $,  when $\mathcal{D}_{f}=\emptyset $. 
\end{definition}
Observe that when $f\in  L^{1}(T,\mathbb{R})$, the upper integral coincides with the usual Lebesgue integral. Conversely, if $f:T\rightarrow \overline{\mathbb{R}}$ is a measurable (in the usual sense) function with finite upper integrable, then, clearly, $f\in  L^{1}(T,\mathbb{R})$ (using the convention that $\inf_\emptyset=+\infty$). In other words, for every measurable function $f:T\rightarrow \overline{\mathbb{R}}$ we have that 
$$
\int_{T}^{*}f(t)d\mu (t)=\int_{T}f(t)d\mu (t).
$$
Definition \ref{D1} will be used next to introduce the integrability of appropriately measurable (see Definition  \ref{DD} below) vector-valued functions $f:T\rightarrow X$, via the upper integrability of the scalar-valued functions $\sigma_{B}(f(\cdot)) $, for weak* (or, equivalently, Mackey) bounded balanced subsets $B\subseteq X^{\ast}$. The issue is that these functions are not necessarily measurable, and so we cannot define rigorously their integral. Thus, the upper integral will be  involved to overcome this difficulty.  

\,

\,

\begin{definition}\label{DD}

(i) A function $f:T\rightarrow U,$ with $U$ being a topological space, is called
simple, if there are $k\in \mathbb{N}$, a partition $T_{i}\in \Sigma $ and
elements $x_{i}\in U$, $i=0,...,k,$ such that $
	f(t)= x_i \text{ for } t\in T_i.
	$
	
	\,
	
	\,
	
	\,
	
	(ii)
A function $f:T\rightarrow U,$ with $U$ being a topological space, is called strongly measurable (measurable, for short), if there exists a
sequence $(f_{n})_n$ of simple functions such that $%
f(t)=\lim\limits_{n \rightarrow \infty }f_{n}(t)$ for almost every (ae, for
short) $t\in T$.

	\,
	
	\,
	
	\,
	
(iii) A strongly measurable function $f:T\rightarrow X$ is said to be strongly 
integrable (integrable, for short), and we write $f\in \mathcal{L}^{1}(T,X)$, if $\int_{T}^{*}\sigma_{B}(f(t)) d\mu (t)$ is finite for every $(\beta(X^*,X))$-bounded balanced subset $B\subseteq X^{\ast}$.

	\,
	
	\,
	
	\,
	
(iv) A function $f:T\rightarrow X$ is called 
weakly or scalarly measurable (integrable, resp.), if for every $x^{\ast }\in X^{\ast },$ $%
t\rightarrow \langle x^{\ast },f(t)\rangle $ is 
measurable (integrable, resp.). 

A function $f:T\rightarrow X^*$ is called $w^*$-measurable ($w^*$-integrable, resp.), if for every $x\in X,$ the mapping $t\rightarrow \langle x,f(t)\rangle $ is 
measurable (integrable, resp.). 
	
	\,
	
	\,
	
	\,
	
(v) We define $\mathcal{L}_{w}^{1}(T,X)$ as the subspace of weakly measurable functions $f:T\rightarrow X$ such that $\int_{T}^{*}\sigma_{B}(f(t)) d\mu (t)$ is finite, for every bounded balanced subset $B\subseteq X^{\ast}$.

 We define $\mathcal{L}_{w^*}^{1}(T,X^*)$ as the subspace of w%
$^*$-integrable functions $f$ such that $\int_{T}^{*}\sigma_{B}(f(t)) d\mu (t)$ is finite, for every bounded balanced subset $B\subseteq X$. 

	\,
	
	\,
	
	\,
	
(vi) We define $L^{1}(T,X)$, $L_{w}^{1}(T,X)$ and $L_{w^*}^{1}(T,X^*)$ as the quotient spaces of $\mathcal{L}%
^{1}(T,X)$, $\mathcal{L}_{w}^{1}(T,X)$ and $\mathcal{L}_{w^*}^{1}(T,X^*)$, respectively, with respect to the equivalence relations $f=g$ ae, $\langle f,x^{\ast
}\rangle =\langle g,x^{\ast }\rangle $ ae $t$ for all $x^{\ast }\in X^{\ast },$
and $\langle f,x\rangle =\langle g,x\rangle $ ae $t$ for all $x\in X,$ respectively.
% (see, for example, \cite{MR0276438}).
	
	\,
	
	\,
	
	\,
	
(vii) Given w$^*$%
-integrable function $f:T\rightarrow X^{\ast }$ and $E\in \Sigma $, the weak integral of $f$ over $E$
is the linear mapping $\int_{E}f(t)d\mu(t):=x_{E}^{\sharp }$, defined on $X$ as 
$$
x_{E}^{\sharp
}(x):=\int_{E}\langle f(t),x\rangle d\mu(t). 
$$ 
 When $f$ is strongly integrable; hence, w$^*$-integrable,
the element $\int_{E}fd\mu$ is also referred to as the strong integral of $f$ over $E$. 
%
%
%\PP{Summarizing in what follows, for an extended real valued function $\int_{E}f(t)d\mu(t)$ is understood in the sense of \eqref{defint}, and for a strongly integrable, or for a $w^*$-integrable function  $\int_{E}f(t)d\mu(t)$ denotes  the weak integral of $f$ over $E$.}

	\,
	
	\,
	
	\,
	
(viii) When $X$ is Banach, we define $L^{\infty }(T,X)$ as the normed space (of classes of equivalence with respect to the relation $f=g$ ae) of strongly measurable functions 
$f:T\rightarrow X$, which are essentially bounded; that is, 
$$
\Vert f\Vert
_{\infty }:=\text{ess}\sup \{\Vert f(t)\Vert :t\in T\}<\infty .
$$	

(ix) When $X$ is Banach, we call an element $\lambda^{\ast }\in L^{\infty }(T,X)^{\ast }$ (the topological dual space of $L^{\infty }(T,X)$) a singular measure, and we write $\lambda^{\ast }\in L^{\text{sing}}(T,X)$, when there exists a sequence of measurable sets $T_{n}$ such that 
$$
T_{n+1}\subseteq T_{n}, \quad \mu (T_{n})\rightarrow_n 0, \quad \lambda^{\ast }(g\mathds{1}_{T^c_{n}})=0, \text{ for all }g\in L^{\infty}(T,X), \text{ and all }n\in\N,
$$ 
where $\mathds{1}_{A}$ denotes the characteristic function of $A$, equal to $1$ in $A$ and $0$ outside. 
\end{definition}

In the following remark, we gather some useful comments in order to explain the concepts introduced in the above definition. 
\begin{remark}
(i) If $X$ is a Suslin Banach space  and $X^*$ is its topological dual, then
every $(\Sigma ,\mathcal{B}(X))$-measurable function $f:T\rightarrow X$
(that is, $f^{-1}(B)\in \Sigma $ for all $B\in \mathcal{B}(X)$) is weakly
measurable. Here, $\mathcal{B}(X)$ is the Borel $\sigma $-Algebra of the
open (equivalently, weakly open) sets of $X$ (see, e.g., \cite[Theorem III.36 ]%
{MR0467310}).

(ii) In the Banach spaces setting, where $X$ is a Banach space and $X^*$ is its topological dual, $L^{1}(T,X)$ coincides with the space of Bochner integrable functions (see, e.g., \cite[\S II]{MR0453964}). If, in addition,  $X$ is separable, then both notions of (strong and weak) measurability coincide (see \cite[\S II, Theorem 2]{MR0453964}), and, consequently, $L^{1}(T,X)=L_{w}^{1}(T,X)$ in this case. Similarly, if $(X^{\ast },\Vert \cdot \Vert )$ is separable, then 
$L^{1}(T,X^{\ast })=L_{w^{\ast }}^{1}(T,X^{\ast })$. However, when the Banach space $X$ is separable, but the dual space $X^*$ is not (with respect to the dual norm), then strong and weak measurability may not coincide (see \cite[\S II Example 6]{MR0453964}), and so  $L^{1}(T,X)\neq L_{w}^{1}(T,X)$ in general. 

\,
	
	\,
	
	\,
(iii) It is clear that every strongly integrable function is weakly integrable.
However, the weak measurability of a function $f$ does not necessarily imply
the measurability of the function $\sigma _{B}(f(\cdot ))$, and so the
corresponding integral of this last function is to be understood as the upper integral  in the sense
of Definition \ref{D1}. 
	\,
	
	\,
	
	\,
(iv) In general, the weak integral $\int_{E}fd\mu$ of a w$^*$-integrable function $f:T\rightarrow X^{\ast }$ on $E\in \Sigma $, may not be in $X^*$. But, if $X$ is Banach, then $\int_{E}fd\mu\in X^*$, and is referred to in this case as the Gelfand integral of $f $ over $E$ (see \cite[\S II, Lemma 3.1]{MR0453964} and the details therein).

	\,
	
	\,

	\,
(v) When $X$ is Banach, we know that each functional $\lambda ^{\ast }\in
L^{\infty }(T,X)^{\ast }$ can be uniquely written as
$$
\lambda ^{\ast
}(x)=\int_{T}\langle \lambda _{1}^{\ast }(t),x(t) \rangle d\mu
(t)+\lambda _{2}^{\ast }(x),\;\; \text{ for all } x\in L^{\infty }(T,X),
$$
where $\lambda _{1}^{\ast }\in L_{w^{\ast
}}^{1}(T,X^{\ast })$, $\lambda _{2}^{\ast }\in L^{\text{sing}}(T,X)$, and the integral defines  the duality product between $ L_{w^{\ast}}^{1}(T,X^{\ast })$ and $L^{\infty }(T,X)$ (see, for example, \cite{MR0467310,0036-0279-30-2-R03}).

\end{remark}

\,

We shall need the following decomposability concepts \cite[Definition 3, \S VII]{MR0467310}. Recall that $\mathcal{P}(T)$ is the power set of $T$, and $X^T$ is the set of functions from $T$ to $X$.
\begin{definition}
	\label{defintion:decomposabilityC} $(i) $ Assume that $(T,\Sigma)=(\mathbb{N}
	,\mathcal{P}{(\mathbb{N}}))$. A vector space $L\subset X^T$ is said to be
	decomposable if 
	\begin{equation*}
		c_{00}(X):=\{ (x_n): \exists k_0\in \mathbb{N} \text{ such that } x_k=0,\,
		\forall k\geq k_0\}\subset L.
	\end{equation*}
	$(ii) $ Assume that $(T,\Sigma)\ne (\mathbb{N},\mathcal{P}{(\mathbb{N}}))$.
	A vector space $L$ of weakly integrable functions in $X^T$ is said to be
	decomposable if for every $u\in L$, every weakly integrable function $f\in
	X^T$ such that $f(T)$ is relatively compact, and every  set $%
	A\in \Sigma$ with finite measure, we have that $$f\mathds{1}_{A} + u\mathds{1}_{A^c}\in L.$$
\end{definition}
\begin{remark}
The specification of the decomposability above with respect to the underlying $\sigma $%
-algebra $(T,\Sigma )$ makes sense, since the two definitions may not coincide.
For instance, if $X=\mathbb{R}$ and $\mu $ is a finite measure over $(%
\mathbb{N},\mathcal{P}{(\mathbb{N}}))$, then the space $L=c_{00}(X)$ is
obviously decomposable in the sense of Definition \ref{defintion:decomposabilityC}(i), but not with respect to  Definition \ref{defintion:decomposabilityC}(ii). Indeed, the decomposability of $L$ in the
sense of  Definition \ref{defintion:decomposabilityC}(ii) would imply that $%
\ell ^{\infty }\subseteq L$.
\end{remark}
\,

\,

\,

The above measurability and integrability concepts are extended next to the framework of multifunctions. 
\begin{definition}
A multifunction $G:T\rightrightarrows X$ is called $\Sigma$-$\mathcal{B}(X,\tau_X)$%
-measurable (measurable, for simplicity) if its graph, given as 
$$
\grafo G:=\{(t,x)\in
T\times X:x\in G(t)\},
$$ 
is an element of $\Sigma \otimes \mathcal{B}(X,\tau_X)$. We
say that $G$ is weakly measurable if for every $x^{\ast }\in X^{\ast }$, $%
t\rightarrow \sigma _{G(t)}(x^{\ast })$ is a measurable function.
\end{definition}
The next definition gives the integral of multifunctions in the sense of Aumann (see, for example,\cite{MR0185073}).
\begin{definition}\label{AUMMANINTEGRAL}
	The strong and the weak integrals of a (non-necessarily measurable)
	multifunction $G:T\rightrightarrows X^{\ast }$ are given respectively by 
	\begin{align*}
		\int_{T}G(t)d\mu (t):=& \bigg\{\int_{T}m(t)d\mu (t)\in X^{\ast
		}:m\text{ is integrable}\text{ and }m(t)\in G(t)\;ae\bigg\}, \\
		(w)\text{-}\int_{T}G(t)d\mu (t):=& \bigg\{\int_{T}m(t)d\mu
		(t)\in X^{\ast }:m\text{ is w$^{\ast }$-integrable}\text{ and }m(t)\in
		G(t)\;ae\bigg\}.
	\end{align*}
\end{definition}
%It is important to recall that the original definition of the integral of set-valued mappings is due to R. J. Aumann and it was given for multifunctions defined on a closed  interval $[0,T]$  in $\R$; see for example \cite{MR0185073}. For this reason  many authors give the name of \emph{Aumann Integral} to  Definition \ref{AUMMANINTEGRAL}.

In what follows, we fix a function $f:T\times X\rightarrow \overline{\mathbb{R}}$. Given a vector subspace $L$ of $X^T$, we shall call an integral functional on $L$, the
extended real-valued functional $\hat{I}_{f}$, which is given as
\begin{equation}\label{ifl}
	x(\cdot )\in L\rightarrow \hat{I}_{f}(x(\cdot
	)):=\int_{T}^*f(t,x(t))d\mu (t),
\end{equation}
Hence, in particular, when the function $f(\cdot,x(\cdot))$ is measurable, we obtain 
\begin{equation}\label{ifx}
	 \hat{I}_{f}(x(\cdot
	))=\int_{T}f(t,x(t))d\mu (t).
\end{equation}
The special but interesting case when $L$ is the linear space of
constant functions will deserve a serious study. Since in such a case $\hat{I}_{f}$ can be regarded as a function on $X$, we shall denote it simply as $I_{f}$ and write
\begin{equation*}
	x\in X\rightarrow I_{f}(x):=\int_{T}^*f(t,x)d\mu (t).
\end{equation*}%
The assumptions that will be imposed on $f$ will guarantee that $\hat{I}_{f}$ and $I_{f}$ are defined by using the usual integral instead of the upper integral. 

The function $f$ is called a \emph{$\tau_X $-normal integrand} (or, simply, normal integral when no confusion occurs), if 
$f$ is $\Sigma \otimes \mathcal{B}(X,\tau_X)$-measurable and the functions $%
f(t,\cdot )$ are lsc for ae $t\in T$. In addition, if $f(t,\cdot )\in \Gamma
_{0}(X)$ for ae $t\in T$, then $f$ is called \emph{convex normal integrand}. For
simplicity, we denote $f_{t}:=f(t,\cdot )$. 
 
The following result characterizes the
Fenchel conjugate of $\hat{I}_f$ when $X$ and $X^{\ast}$ are Suslin spaces, which is known (\cite[Theorem VII-7]{MR0467310}), as well as when  
$(T,\Sigma)=(\mathbb{N},\mathcal{P}(\mathbb{N}))$. 
%The first part of it, corresponding to the
%case when $(X,X^*)$ is a dual pair of Suslin spaces, can be found in \cite[%
%Theorem VII-7]{MR0467310}. In the second part, we obtain a similar
%representation of the conjugate of $\hat{I}_f$ when $(T,\Sigma)=(\mathbb{N},%
%\mathcal{P}(\mathbb{N}))$.

\begin{proposition}
Assume that either $X,X^{\ast}$ are Suslin spaces, or $%
	(T,\Sigma )=(\mathbb{N},\mathcal{P}(\mathbb{N}))$, and assume that $f$ is a convex normal integrand. Let $L\subset X^T$ and $L^{\ast}\subset {X^*}^T$ be two vector
	spaces of weakly integrable functions, such that $L$ is decomposable and the function $$t\rightarrow
	\langle v(t),u(t)\rangle $$ is integrable for every $(u,v)\in L\times
	L^*$. Let $\hat{I}_{f}$ be the integral functional defined on $L$.
	If $f:T\times X\rightarrow \overline{\mathbb{R}}$ is a
	normal integrand such that $\hat{I}_{f}(u_{0})<\infty $ for some $u_{0}\in
	L$, then the integral functional $\hat{I}_{f^{\ast}}$ defined on $L^*$ satisfies, for all $v\in
	L^{\ast}$, 
	\begin{equation*}
		\hat{I}_{f^{\ast }}(v)=\sup_{u\in L}\int_{T}\left( \langle
		u(t),v(t)\rangle -f(t,u(t)\right)) d\mu (t),
	\end{equation*}
\end{proposition}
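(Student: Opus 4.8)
The identity to be proved is an interchange of a supremum with an integral. Recalling that, by definition, $\hat I_{f^{\ast}}(v)=\int_T^{\ast} f^{\ast}(t,v(t))\,d\mu(t)$ and that the conjugate of $f_t$ is $f^{\ast}(t,v(t))=\sup_{x\in X}\big(\langle v(t),x\rangle-f(t,x)\big)$, the claim reads
\[
\int_T^{\ast}\sup_{x\in X}\big(\langle v(t),x\rangle-f(t,x)\big)\,d\mu(t)=\sup_{u\in L}\int_T\big(\langle v(t),u(t)\rangle-f(t,u(t))\big)\,d\mu(t).
\]
Put $h(t,x):=\langle v(t),x\rangle-f(t,x)$, so that $h(t,\cdot)$ is upper semicontinuous and concave for a.e. $t$, and $(t,x)\mapsto h(t,x)$ is jointly measurable. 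The plan is to prove the two inequalities separately. The inequality $\ge$ is elementary: for fixed $u\in L$ one has $h(t,u(t))\le f^{\ast}(t,v(t))$ pointwise a.e.; since $t\mapsto\langle v(t),u(t)\rangle$ is integrable by hypothesis and $t\mapsto f(t,u(t))$ is measurable (the normal integrand $f$ composed with the measurable selection $u$), $h(\cdot,u(\cdot))$ is a measurable minorant of $f^{\ast}(\cdot,v(\cdot))$, whence $\int_T h(t,u(t))\,d\mu\le\int_T^{\ast}f^{\ast}(t,v(t))\,d\mu=\hat I_{f^{\ast}}(v)$ directly from the definition of the upper integral. Taking the supremum over $u\in L$ yields one inequality, and all the difficulty is concentrated in the reverse one.

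For the reverse inequality I would argue by measurable selection of approximate maximizers of $h$, combined with the decomposability of $L$. Choose measurable functions $\beta_j$ with $\beta_j(t)\uparrow f^{\ast}(t,v(t))$ and $\beta_j(t)<f^{\ast}(t,v(t))$ a.e. In the Suslin case ($X,X^{\ast}$ Suslin), the superlevel set $\{(t,x):h(t,x)>\beta_j(t)\}$ lies in $\Sigma\otimes\mathcal{B}(X,\tau_X)$ and projects onto a measurable subset of $T$, so the projection and von Neumann--Aumann/Castaing selection theorems available in Suslin spaces furnish a weakly measurable $w_j:T\to X$ with $h(t,w_j(t))\ge\beta_j(t)$ a.e.; this is exactly the mechanism of \cite[Theorem VII-7]{MR0467310}. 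Such a $w_j$ need not belong to $L$, and here decomposability enters. Using $\sigma$-finiteness, write $T=\bigcup_k A_k$ with $A_k\uparrow T$, $\mu(A_k)<\infty$, and set $B_m:=\{t:w_j(t)\in K_m\}$ for an increasing sequence of compacta $K_m$ exhausting the (essentially separable) range of $w_j$. On $E:=A_k\cap B_m$ the function $w_j\mathds{1}_{E}$ is weakly integrable with relatively compact range, so decomposability gives $u_{j,k,m}:=w_j\mathds{1}_{E}+u_0\mathds{1}_{E^c}\in L$, with $u_0$ the reference function satisfying $\hat I_f(u_0)<\infty$. Then
\[
\int_T h(t,u_{j,k,m}(t))\,d\mu=\int_{E}h(t,w_j(t))\,d\mu+\int_{E^c}h(t,u_0(t))\,d\mu\ge\int_{E}\beta_j\,d\mu+\int_{E^c}h(t,u_0(t))\,d\mu.
\]
Letting $m,k\to\infty$ so that $E\uparrow T$, the tail $\int_{E^c}h(t,u_0(t))\,d\mu$ tends to $0$ by dominated convergence, using the integrability of $t\mapsto\langle v(t),u_0(t)\rangle$ together with $\hat I_f(u_0)<\infty$, while $\int_{E}\beta_j\,d\mu\to\int_T\beta_j\,d\mu$; finally $j\to\infty$ and $\beta_j\uparrow f^{\ast}(\cdot,v(\cdot))$ give $\sup_{u\in L}\int_T h(t,u(t))\,d\mu\ge\int_T^{\ast}f^{\ast}(t,v(t))\,d\mu$, completing this case.

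In the discrete case $(T,\Sigma)=(\mathbb{N},\mathcal{P}(\mathbb{N}))$ no selection theorem is needed and the reverse inequality reduces to coordinatewise choices: the integral is the weighted sum $\sum_n\mu(\{n\})(\cdot)$ and decomposability means $c_{00}(X)\subseteq L$. Given $\gamma<\hat I_{f^{\ast}}(v)$, pick $N$ so that $\sum_{n\le N}\mu(\{n\})f^{\ast}(n,v(n))$ exceeds $\gamma$ with a margin (if some $f^{\ast}(n_0,v(n_0))=+\infty$ one inflates that single coordinate to make the sum arbitrarily large); for each $n\le N$ choose $x_n\in X$ with $\langle v(n),x_n\rangle-f(n,x_n)$ close to $f^{\ast}(n,v(n))$, and let $u$ be the function equal to $x_n$ for $n\le N$ and to $u_0(n)$ for $n>N$. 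Since $u$ differs from $u_0$ in finitely many coordinates and $L$ is a vector space containing $c_{00}(X)$, we have $u\in L$; the tail $\sum_{n>N}\mu(\{n\})h(n,u_0(n))$ tends to $0$ because $\hat I_f(u_0)<\infty$ and $t\mapsto\langle v(t),u_0(t)\rangle$ is summable, so $\sum_n\mu(\{n\})h(n,u(n))>\gamma$. Letting $\gamma\uparrow\hat I_{f^{\ast}}(v)$ closes this case.

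The main obstacle is the reverse inequality in the Suslin regime: producing a \emph{measurable} approximate maximizer $w_j$ of the normal integrand $h$ genuinely relies on the Suslin structure through the projection and measurable selection theorems, and inserting $w_j$ into the decomposable space $L$ without destroying the value of the integral is equally delicate. The two technical points there are the truncation of $w_j$ to relatively compact range on sets of finite measure so that Definition \ref{defintion:decomposabilityC}(ii) applies, and the control of the infinite-measure tail over $E^c$, for which the hypotheses $\hat I_f(u_0)<\infty$ and the integrability of $t\mapsto\langle v(t),u(t)\rangle$ are precisely what is required. The discrete case, by contrast, bypasses measurability altogether and is essentially a finite-dimensional truncation argument.
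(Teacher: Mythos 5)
Your proposal is correct and, in substance, follows the same route as the paper. For the discrete case $(T,\Sigma)=(\mathbb{N},\mathcal{P}(\mathbb{N}))$ your argument is essentially identical to the paper's: replace $u_0$ by approximate maximizers on finitely many coordinates, use $c_{00}(X)\subseteq L$ to stay in $L$, and control the tail $\sum_{n>N}\mu(\{n\})\big(\langle v(n),u_0(n)\rangle-f(n,u_0(n))\big)$. For the Suslin case the paper does not reprove anything: it reduces to $\hat I_f(u_0)\in\mathbb{R}$ and then simply invokes \cite[Theorem VII-7]{MR0467310}, whereas you reconstruct the proof of that theorem (projection/von Neumann--Aumann selection of approximate maximizers, truncation to relatively compact range on sets of finite measure so that Definition \ref{defintion:decomposabilityC}(ii) applies). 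Your reconstruction is sound, and its only cost is that you must justify a few standard points the citation absorbs: the measurability of $t\mapsto f^{\ast}(t,v(t))$, the inner regularity (Radonness) of the pushforward measure needed to exhaust the range of $w_j$ by compacta, and the choice of $\beta_j$ bounded below by the integrable function $h(\cdot,u_0(\cdot))$ so that $\int_E\beta_j\to\int_T\beta_j$. One small point you should make explicit, as the paper does at the outset: first dispose of the case $\hat I_f(u_0)=-\infty$ (where the identity is trivial since both sides equal $+\infty$), because your tail estimates in both cases use the integrability of $f(\cdot,u_0(\cdot))$, which follows from $\hat I_f(u_0)\in\mathbb{R}$ but not from $\hat I_f(u_0)<\infty$ alone.
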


\begin{proof}
	First, we may suppose without loss of generality (w.l.o.g.) that $\hat{I}_{f}(u_{0})\in \mathbb{R}$;
	since for otherwise, $\hat{I}_{f}(u_{0})=-\infty $ and the conclusion
	holds trivially. So, the proof in the first case of Suslin spaces follows
	from \cite[Theorem VII-7]{MR0467310}. For the proof in the second case when $%
	(T,\Sigma )=(\mathbb{N},\mathcal{P}(\mathbb{N})),$ we fix $v\in
	L^{\ast}$ and denote 
	\begin{align*}
		\delta (n):=&\langle u_{0}(n),v(n)\rangle -f(n,u_{0}(n)),\\
		\alpha :=&\sup_{u\in L}\int_{T}\left( \langle u(t),v(t)\rangle -f(t,u(t)\right)) d\mu
		(t).
	\end{align*}
	We consider the sequence $(x_{k})_k$ defined as	\begin{equation*}
		x_{k}(n)=u_{0}(n) \text{ for } n>k,\,  \text{ and } x_{k}(n):=w_{n} \text{ for } n\leq k,
	\end{equation*}
	where $w_{n}\in X$ is any vector satisfying
	$$
	\langle w_{n},v(n)\rangle
	-f(n,w_{n})\geq \max \{f^{\ast }(n,v(n))-\frac{1}{k},\delta (n)\}
	$$ 
	when $f^{\ast }(n,v(n))<+\infty$ and, for otherwise,
	$$
	\langle w_n,v(n)\rangle -f(n,w_n)\geq \max
	\{k,\delta (n)\}.
	$$ 
	Due to the decomposability of the vector space $L$ (see Definition \ref{defintion:decomposabilityC}(i)) we see that $(x_{k})\in L$.
	Then, for every $k$ and $k_{0}$ in $ \mathbb{N}$ such that $%
	k_{0}<k$ we have  
	\begin{align*}
		\alpha & \geq \int_{n\leq k_{0}}\langle x_{k}(n),v(n)\rangle
		-f(n,x_{k}(n))d\mu (n)+\int_{n>k_{0}}\left\langle
		x_{k}(n),v(n)\right\rangle -f(n,x_{k}(n))d\mu (n) \\
		& \geq \int_{n\leq k_{0}}\langle x_{k}(n),v(n)\rangle
		-f(n,x_{k}(n))d\mu (n)+\int_{n>k_{0}}\delta (n)d\mu (n).
	\end{align*}
	Hence, by taking the limit when $k\to+\infty$ we get 
	$$
	\alpha \geq \int_{n\leq
		k_{0}}f^{\ast }(n,v(n))d\mu (n)+\int_{n>k_{0}}\delta (n)d\mu (n),
		$$
	and the inequality $\alpha \geq \int_{\mathbb{N}}f^{\ast
	}(n,v(n))d\mu (n)$ follows as $k_{0}$ goes to $+\infty .$ This finishes the
	proof because the converse inequality 
	$$\alpha \leq \int_{\mathbb{N}
	}f^{\ast }(n,v(n))d\mu (n)$$ holds trivially.\qed
\end{proof}

We also recall the following result, which gives the representation of the Fenchel conjugate
of the integral functional $\hat{I}_{f}$ defined on $L^{\infty }(T,X).$ This result was first\ proved in 
\cite[Theorem 1]{MR0310612} for the case $X=\mathbb{R}^{n},$ and next in \cite[%
Theorem 4]{MR0390870} when $X$ is an  arbitrary separable reflexive Banach space. We recall that, for $s^{\ast }\in L^{\text{sing}}(T,X) (\subset (L^{\infty }(T,X))^{\ast })$,
$$
{\sigma }_{\dom \hat{I}_{f}}(s^{\ast }):= \sup\limits_{ u \in \dom \hat{I}_{f}  }  s^\ast (  u )= \sup\limits_{ u \in \dom \hat{I}_{f}  }  \langle s^\ast, u\rangle,
$$
where the last product refers to the duality product between $L^{\infty }(T,X)$ and its dual $(L^{\infty }(T,X))^{\ast }$ (we are assuming that $X$ is Banach).
\begin{theorem}
	\label{conjugateintegrandRock}Let $X$ be a separable reflexive Banach space,
	and $f:T\times X\rightarrow \mathbb{R}\cup \{+\infty \}$ be a normal convex
	integrand. Assume that the integral functional $\hat{I}_{f}$ defined on $%
	L^{\infty }(T,X)$ is finite at some point in $L^{\infty }(T,X),$ and that the integral functional $%
	\hat{I}_{f^{\ast }}$ defined on $L_{w^{\ast }}^{1}(T,X^{\ast })$ is finite at some point. Given $u^{\ast }\in (L^{\infty }(T,X))^{\ast }$, we choose $\ell ^{\ast }\in
	L_{w^{\ast }}^{1}(T,X^{\ast })$ and $s^{\ast }\in L^{\text{sing}}(T,X)$ such that  $u^{\ast }=\ell ^{\ast }+s^{\ast }$.  Then
		\begin{equation*}
		(\hat{I}_{f})^{\ast }(u^{\ast })=\int_{T}f^{\ast }(t,\ell ^{\ast
		}(t))d\mu (t)+ {\sigma }_{\dom \hat{I}_{f}}(s^{\ast }).
	\end{equation*}
\end{theorem}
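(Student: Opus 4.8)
The plan is to compute the supremum defining $(\hat{I}_f)^*(u^*)$ directly, separating the absolutely continuous and singular contributions of the decomposition $u^*=\ell^*+s^*$. By the representation recalled in Remark (v), for every $u\in L^{\infty}(T,X)$ one has $\langle u^*,u\rangle=\int_T\langle \ell^*(t),u(t)\rangle\,d\mu(t)+s^*(u)$, so that
\[
(\hat{I}_f)^*(u^*)=\sup_{u\in L^{\infty}(T,X)}\Big\{\int_T\big(\langle \ell^*(t),u(t)\rangle-f(t,u(t))\big)\,d\mu(t)+s^*(u)\Big\}.
\]
For the inequality ``$\le$'' I would argue pointwise: for any $u$ with $\hat{I}_f(u)<+\infty$ the Fenchel--Young inequality gives $\langle \ell^*(t),u(t)\rangle-f(t,u(t))\le f^*(t,\ell^*(t))$ for a.e.\ $t$, hence after integration the bracketed integral is at most $\int_T f^*(t,\ell^*(t))\,d\mu(t)$; moreover such a $u$ lies in $\dom\hat{I}_f$, so $s^*(u)\le\sigma_{\dom\hat{I}_f}(s^*)$. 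Summing and taking the supremum yields the first inequality.

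The converse ``$\ge$'' is the heart of the proof, and it is where I expect the main difficulty. First I would dispose of the absolutely continuous term by invoking the preceding Proposition: since $X$ and $X^*$ are Suslin (being separable reflexive Banach spaces), $L^{\infty}(T,X)$ is decomposable, $\ell^*\in L^{1}_{w^{*}}(T,X^{*})$, and $\hat{I}_f$ is finite at a point, that result identifies
\[
\int_T f^*(t,\ell^*(t))\,d\mu(t)=\hat{I}_{f^*}(\ell^*)=\sup_{u\in L^{\infty}(T,X)}\int_T\big(\langle \ell^*(t),u(t)\rangle-f(t,u(t))\big)\,d\mu(t).
\]
Thus, given $\varepsilon>0$, I can pick $u_{\varepsilon}\in L^{\infty}(T,X)\cap\dom\hat{I}_f$ whose integral exceeds $\int_T f^*(t,\ell^*(t))\,d\mu(t)-\varepsilon$ (or is arbitrarily large, in the case $\int_T f^*(t,\ell^*(t))\,d\mu(t)=+\infty$, which forces $(\hat{I}_f)^*(u^*)=+\infty$ and settles that case). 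Independently, I choose $u_k\in\dom\hat{I}_f$ with $s^*(u_k)\to\sigma_{\dom\hat{I}_f}(s^*)$.

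The remaining, and most delicate, step is to glue these two families using the singular structure of $s^*$. Let $(T_n)$ be the decreasing sets from $s^*\in L^{\text{sing}}(T,X)$, with $\mu(T_n)\to 0$ and $s^*(g\mathds{1}_{T_n^c})=0$ for all $g$, and set $v_{n,k}:=u_{\varepsilon}\mathds{1}_{T_n^c}+u_k\mathds{1}_{T_n}\in L^{\infty}(T,X)\cap\dom\hat{I}_f$. The concentration property yields $s^*(v_{n,k})=s^*(u_k\mathds{1}_{T_n})=s^*(u_k)$, independently of $n$, while the absolute continuity of the integrals of the integrable functions $f(\cdot,u_k)$ and $\langle \ell^*(\cdot),u_k(\cdot)\rangle$ gives $\int_{T_n}\big(\langle \ell^*,u_k\rangle-f(\cdot,u_k)\big)\,d\mu\to 0$ and $\int_{T_n^c}\big(\langle \ell^*,u_{\varepsilon}\rangle-f(\cdot,u_{\varepsilon})\big)\,d\mu\to\int_T\big(\langle \ell^*,u_{\varepsilon}\rangle-f(\cdot,u_{\varepsilon})\big)\,d\mu$ as $n\to\infty$. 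Evaluating the objective at $v_{n,k}$ and letting successively $n\to\infty$, then $k\to\infty$, then $\varepsilon\to 0$, produces the bound $(\hat{I}_f)^*(u^*)\ge\int_T f^*(t,\ell^*(t))\,d\mu(t)+\sigma_{\dom\hat{I}_f}(s^*)$. The main obstacle is precisely this gluing: one must guarantee that the modifications on the vanishing-measure sets $T_n$ keep the functions essentially bounded and in the domain, capture the full singular value $s^*(u_k)$, and leave the integral term asymptotically unchanged; the key technical inputs are the singular-measure concentration on the $T_n$ and the uniform integrability coming from $u_k\in\dom\hat{I}_f\cap L^{\infty}(T,X)$.
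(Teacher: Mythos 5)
The paper does not actually prove Theorem \ref{conjugateintegrandRock}: it is recalled as a known result, with the proof deferred to Rockafellar (\cite[Theorem 1]{MR0310612} for $X=\mathbb{R}^{n}$ and \cite[Theorem 4]{MR0390870} for separable reflexive $X$). So there is no internal proof to compare against; what you have written is essentially a reconstruction of Rockafellar's original argument, and its architecture is sound. The inequality ``$\le$'' via pointwise Fenchel--Young plus $s^{*}(u)\le\sigma_{\dom\hat{I}_{f}}(s^{*})$ is correct; the identification of $\int_{T}f^{*}(t,\ell^{*}(t))\,d\mu(t)$ with the supremum of the integral term through the preceding Proposition is legitimate (a separable reflexive Banach space and its dual are Suslin, and $L^{\infty}(T,X)$ is decomposable); and the gluing $v_{n,k}=u_{\varepsilon}\mathds{1}_{T_{n}^{c}}+u_{k}\mathds{1}_{T_{n}}$, with $s^{*}(v_{n,k})=s^{*}(u_{k})$ and dominated convergence on $T_{n}$, is exactly the standard mechanism for decoupling the singular part.

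Two points need tightening. First, your parenthetical claim that $\int_{T}f^{*}(t,\ell^{*}(t))\,d\mu(t)=+\infty$ ``forces'' $(\hat{I}_{f})^{*}(u^{*})=+\infty$ is not immediate: the functions $u$ that make $\int_{T}(\langle\ell^{*},u\rangle-f(\cdot,u))\,d\mu$ large may simultaneously have $s^{*}(u)$ very negative, so you must run the same gluing there too (replace $u$ by $u\mathds{1}_{T_{n}^{c}}+u_{0}\mathds{1}_{T_{n}}$ for a fixed $u_{0}\in\dom\hat{I}_{f}$) before letting the integral blow up. Second, the dominated-convergence step $\int_{T_{n}}(\langle\ell^{*},u_{k}\rangle-f(\cdot,u_{k}))\,d\mu\to 0$ requires $f(\cdot,u_{k}(\cdot))\in L^{1}(T)$, not merely a finite upper integral; this is precisely where the standing hypothesis that $\hat{I}_{f^{*}}$ is finite at some $v_{0}\in L_{w^{*}}^{1}(T,X^{*})$ enters (it supplies the integrable minorant $\langle v_{0}(\cdot),u_{k}(\cdot)\rangle-f^{*}(\cdot,v_{0}(\cdot))$, and also guarantees that $\hat{I}_{f}$ is proper), an assumption your write-up never invokes. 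With these two repairs the argument is complete.
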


A straightforward application of the above theorem gives us a representation
of the subdifferential of integrand functionals. The proof can be found (for 
$\varepsilon =0$) in \cite[Corollary 1B]{MR0310612} for the
finite-dimensional case, and in \cite[Proposition 1.4.1]{MR2444461} for
arbitrary separable reflexive Banach space. The proof of the general case $%
\varepsilon \geq 0$ is similar, and is given here for completeness.

\begin{proposition}
	\label{dualidadRock}With the assumptions of  Theorem \ref{conjugateintegrandRock}, for every\ $u\in L^{\infty }(T,X)$ and $%
	\varepsilon \geq 0,$ one has that $u^{\ast }=\ell ^{\ast }+s^{\ast }\in
	\partial _{\varepsilon }\hat{I}_{f}(u)$ (with $\ell ^{\ast }\in L_{w^{\ast
	}}^{1}(T,X^{\ast })$ and $s^{\ast }\in L^{\text{sing}}\ (T,X)$) if and only
	if there exist an integrable function $\varepsilon _{1}:T\rightarrow
	\lbrack 0,+\infty )$ and a constant $\varepsilon _{2}\geq 0$ such that 
	\begin{equation*}
		\ell ^{\ast }(t)\in \partial _{\varepsilon _{1}(t)}f(t,u(t))\text{ ae, }%
		s^{\ast }\in {N}_{{\dom}\hat{I}_{f}}^{\varepsilon _{2}}(u),%
		\text{ and }\int_{T}\varepsilon _{1}(t)d\mu (t)+\varepsilon _{2}\leq
		\varepsilon .
	\end{equation*}
\end{proposition}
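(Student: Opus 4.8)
The plan is to reduce both implications to the Fenchel--Young characterization of the $\varepsilon$-subdifferential, namely that for a convex function $g$ with $g(u)$ finite one has $u^{\ast}\in\partial_{\varepsilon}g(u)$ if and only if $g(u)+g^{\ast}(u^{\ast})\le\langle u^{\ast},u\rangle+\varepsilon$, and then to insert the conjugate formula supplied by Theorem \ref{conjugateintegrandRock}. First I would observe that whenever either side of the asserted equivalence holds we must have $\hat{I}_{f}(u)<\infty$, so $u\in\dom\hat{I}_{f}$; this makes $s^{\ast}(u)$ finite and gives $\sigma_{\dom\hat{I}_{f}}(s^{\ast})\ge s^{\ast}(u)$. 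Substituting the decomposition $u^{\ast}=\ell^{\ast}+s^{\ast}$ and the formula of Theorem \ref{conjugateintegrandRock} into the Fenchel--Young inequality for $\hat{I}_{f}$, it becomes
\begin{equation*}
\int_{T}\big(f(t,u(t))+f^{\ast}(t,\ell^{\ast}(t))-\langle\ell^{\ast}(t),u(t)\rangle\big)d\mu(t)+\big(\sigma_{\dom\hat{I}_{f}}(s^{\ast})-s^{\ast}(u)\big)\le\varepsilon ,
\end{equation*}
where, by the pointwise Fenchel--Young inequality for $f_{t}$, the integrand is nonnegative ae, and, since $u\in\dom\hat{I}_{f}$, the parenthesized singular term is nonnegative as well.

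For the implication $(\Rightarrow)$ I would simply read off the two defects: set $\varepsilon_{1}(t):=f(t,u(t))+f^{\ast}(t,\ell^{\ast}(t))-\langle\ell^{\ast}(t),u(t)\rangle$ and $\varepsilon_{2}:=\sigma_{\dom\hat{I}_{f}}(s^{\ast})-s^{\ast}(u)$. Both are nonnegative by the previous paragraph, and the displayed inequality is exactly $\int_{T}\varepsilon_{1}(t)d\mu(t)+\varepsilon_{2}\le\varepsilon$. The identity defining $\varepsilon_{1}(t)$ is precisely the equality case certifying $\ell^{\ast}(t)\in\partial_{\varepsilon_{1}(t)}f(t,u(t))$ ae, while the identity $\sigma_{\dom\hat{I}_{f}}(s^{\ast})-s^{\ast}(u)=\varepsilon_{2}$ is exactly $s^{\ast}\in N^{\varepsilon_{2}}_{\dom\hat{I}_{f}}(u)=\partial_{\varepsilon_{2}}\delta_{\dom\hat{I}_{f}}(u)$.

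For the converse $(\Leftarrow)$ I would integrate the pointwise inequalities. From $\ell^{\ast}(t)\in\partial_{\varepsilon_{1}(t)}f(t,u(t))$ one gets $\langle\ell^{\ast}(t),y(t)-u(t)\rangle\le f(t,y(t))-f(t,u(t))+\varepsilon_{1}(t)$ ae, for every $y\in\dom\hat{I}_{f}\subseteq L^{\infty}(T,X)$; integrating over $T$ and adding $s^{\ast}(y-u)\le\varepsilon_{2}$, which is the meaning of $s^{\ast}\in N^{\varepsilon_{2}}_{\dom\hat{I}_{f}}(u)$, yields $\langle u^{\ast},y-u\rangle\le\hat{I}_{f}(y)-\hat{I}_{f}(u)+\int_{T}\varepsilon_{1}d\mu+\varepsilon_{2}\le\hat{I}_{f}(y)-\hat{I}_{f}(u)+\varepsilon$. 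The remaining case $\hat{I}_{f}(y)=+\infty$ is trivial since the left-hand side is finite, so $u^{\ast}\in\partial_{\varepsilon}\hat{I}_{f}(u)$. (Equivalently, one may add the integrated pointwise and singular Fenchel--Young defects and invoke Theorem \ref{conjugateintegrandRock} to recover $\hat{I}_{f}(u)+(\hat{I}_{f})^{\ast}(u^{\ast})\le\langle u^{\ast},u\rangle+\varepsilon$.)

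The main obstacle is the measurability and integrability bookkeeping required to split the integral of $\varepsilon_{1}(\cdot)$ into its three constituents without producing an $\infty-\infty$ ambiguity. Here I would use that both $f$ and its conjugate $f^{\ast}$ are convex normal integrands, so that $t\mapsto f(t,u(t))$ and $t\mapsto f^{\ast}(t,\ell^{\ast}(t))$ are measurable, while $t\mapsto\langle\ell^{\ast}(t),u(t)\rangle$ is integrable because $\ell^{\ast}\in L^{1}_{w^{\ast}}(T,X^{\ast})$ pairs with $u\in L^{\infty}(T,X)$; hence $\varepsilon_{1}(\cdot)$ is measurable. The finiteness of $\hat{I}_{f}(u)=\int_{T}f(t,u(t))d\mu$ and of the pairing, together with the upper bound on $(\hat{I}_{f})^{\ast}(u^{\ast})$ forced by Fenchel--Young (and the pointwise lower bound $f^{\ast}(t,\ell^{\ast}(t))\ge\langle\ell^{\ast}(t),u(t)\rangle-f(t,u(t))$), guarantee that $\int_{T}f^{\ast}(t,\ell^{\ast}(t))d\mu(t)$ is finite; the nonnegative $\varepsilon_{1}$ then has finite integral and lies in $L^{1}$, which legitimizes the splitting in both directions.
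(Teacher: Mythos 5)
Your proposal is correct and follows essentially the same route as the paper: both reduce the statement to the Fenchel--Young characterization of $\partial_{\varepsilon}\hat{I}_{f}(u)$, substitute the conjugate formula of Theorem \ref{conjugateintegrandRock}, and define $\varepsilon_{1}(t)$ and $\varepsilon_{2}$ as the pointwise and singular Fenchel--Young defects. The paper's proof is terser (it writes out only the forward direction and omits the integrability bookkeeping), so your additional care with the converse and with the $L^{1}$ membership of $\varepsilon_{1}$ is consistent with, not divergent from, its argument.
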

\begin{proof}
	Take $u^{\ast }=\ell ^{\ast }+s^{\ast }$ in $\partial _{\varepsilon }\hat{I}%
	_{f}(u);$ hence, $u\in \dom\hat{I}_{f}.$ Then, by Theorem \ref{conjugateintegrandRock} and the definition of $\varepsilon $-subdifferentials, we have 
	\begin{equation*}
		\int_{T}\left( f(t,u(t))+f^{\ast }(t,\ell ^{\ast }(t))-\langle \ell
		^{\ast }(t),u(t)\rangle \right) d\mu (t)+\left( \mathrm{\sigma }_{\dom\hat{I}_{f}}(s^{\ast })-\langle s^{\ast },u\rangle \right) \leq
		\varepsilon .
	\end{equation*}%
	Hence, we conclude by setting\ $\varepsilon _{1}(t):=f(t,u(t))+f^{\ast
	}(t,\ell ^{\ast }(t))-\langle \ell ^{\ast }(t),u(t)\rangle $ $(\geq 0)$ and $%
	\varepsilon _{2}:=\mathrm{\sigma }_{\dom \hat{I}_{f}}(s^{\ast
	})-\langle s^{\ast },u\rangle $ $(\geq 0).$\qed
\end{proof}

The next result, also given in \cite[Theorem 2]{MR0310612}, will be used in
the proof of  Theorem \ref{teorema2} below.

\begin{theorem}
	\label{conjugateintegrandRock2} Let $f:T\times \mathbb{R}^{n}\rightarrow 
	\mathbb{R}\cup \{+\infty \}$ be a normal convex integrand. Assume that $\bar{%
		u}\in L^{\infty }(T,\mathbb{R}^{n}),$ and that for some $r>0$ the function\ $%
	f(\cdot ,\bar{u}(\cdot )+x)$ is integrable for every $x\in \mathbb{R}^{n}$ such that $\Vert
	x\Vert <r.$ Then there is some $u^{\ast }$ in $L^{1}(T,\mathbb{R}^{n})$ such
	that the integral functional $\hat{I}_{f^{\ast }}$ defined on $L^{1}(T,\mathbb{R}^{n})$ satisfies  $\hat{I}_{f^{\ast }}(u^{\ast })<\infty$. Moreover, the integral functional $\hat{I}_{f}$ defined on $L^{\infty }(T,\mathbb{R}^{n})$
	is continuous (in the $L^{\infty }(T,\mathbb{R}^{n})$-norm) at every $u\in L^{\infty }(T,\mathbb{R}^{n})$ such that $%
	\Vert u-\bar{u}\Vert _{\infty }<r.$
\end{theorem}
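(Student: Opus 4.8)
The plan is to reduce the whole statement to a pointwise fact on $\mathbb{R}^n$ — that a finite convex function is automatically continuous and subdifferentiable on the interior of its domain — and then to globalize it through a measurable selection together with an integrable domination. The crucial object I would establish first is an \emph{integrable upper envelope}: for every $s<r$ there is a function $\Phi_s\in L^1(T,\mathbb{R})$ with $f(t,\bar u(t)+x)\le\Phi_s(t)$ for a.e. $t$ and all $\|x\|\le s$. To build it, enclose the closed ball $\{x:\|x\|\le s\}$ in a polytope $P=\co\{x_0,\dots,x_m\}$ whose vertices satisfy $\|x_i\|<r$ (possible since $s<r$, e.g. by scaling a sufficiently fine net of the sphere to a radius between $s$ and $r$), and set $\Phi_s(t):=\max_i f(t,\bar u(t)+x_i)$. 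Each $f(\cdot,\bar u(\cdot)+x_i)$ is integrable by hypothesis, so $\Phi_s\le\sum_i[f(\cdot,\bar u(\cdot)+x_i)]^+\in L^1(T,\mathbb{R})$; and for $\|x\|\le s$ the point $x$ is a convex combination of the $x_i$, so convexity of $f_t$ gives $f(t,\bar u(t)+x)\le\Phi_s(t)$. Taking $x=0$ already shows $f(\cdot,\bar u(\cdot))\in L^1(T,\mathbb{R})$, hence $f(t,\bar u(t))$ is finite a.e.

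\textbf{First assertion.} For a.e. $t$ the envelope shows $\bar u(t)+\{x:\|x\|\le s\}\subseteq\dom f_t$, so $\bar u(t)\in\inte\dom f_t$ and $f_t$ is continuous, hence subdifferentiable, at $\bar u(t)$. Since $f$ is a convex normal integrand, the multifunction $t\rightrightarrows\partial f_t(\bar u(t))$ is measurable with a.e. nonempty values, so a measurable selection theorem yields a measurable $u^{\ast}:T\to\mathbb{R}^n$ with $u^{\ast}(t)\in\partial f_t(\bar u(t))$ a.e. Evaluating the subgradient inequality at $\bar u(t)+s\,u^{\ast}(t)/\|u^{\ast}(t)\|$ produces the local Lipschitz bound $s\|u^{\ast}(t)\|\le\Phi_s(t)-f(t,\bar u(t))$, whose right-hand side is integrable; hence $u^{\ast}\in L^1(T,\mathbb{R}^n)$. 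Finally, the Fenchel--Young equality for a subgradient, $f^{\ast}(t,u^{\ast}(t))=\langle u^{\ast}(t),\bar u(t)\rangle-f(t,\bar u(t))$, writes $f^{\ast}(\cdot,u^{\ast}(\cdot))$ as a sum of integrable functions, giving $\hat I_{f^{\ast}}(u^{\ast})<\infty$.

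\textbf{Continuity.} Given $u\in L^{\infty}(T,\mathbb{R}^n)$ with $\|u-\bar u\|_\infty<r$, I would choose $\rho>0$ and $s$ with $\|u-\bar u\|_\infty+\rho\le s<r$. For any $v$ with $\|v-u\|_\infty\le\rho$ one has $\|v(t)-\bar u(t)\|\le s$ a.e., so the envelope gives $f(t,v(t))\le\Phi_s(t)$ and, integrating, $\hat I_f(v)\le\int_T\Phi_s\,d\mu=:M<\infty$; thus $\hat I_f$ is bounded above by $M$ on the $L^{\infty}$-ball $B(u,\rho)$. In the other direction, the selection $u^{\ast}$ from the first part yields, again by Fenchel--Young, $f(t,v(t))\ge\langle u^{\ast}(t),v(t)\rangle-f^{\ast}(t,u^{\ast}(t))$ with integrable right-hand side, so $\hat I_f>-\infty$ on all of $L^{\infty}(T,\mathbb{R}^n)$ and in particular $\hat I_f(u)$ is finite. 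Since a proper convex function that is bounded above on a neighborhood of a point at which it is finite is continuous there, $\hat I_f$ is continuous at $u$.

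\textbf{Main obstacle.} The delicate step will be the first assertion, namely producing a \emph{genuinely integrable} selection $u^{\ast}$. This rests on two measure-theoretic points I would have to justify with care: the measurability and a.e. nonemptiness of $t\rightrightarrows\partial f_t(\bar u(t))$, which follow from normality of $f$ and legitimate a Castaing-type selection, and the integrable domination of $\|u^{\ast}(t)\|$, which is exactly the local Lipschitz estimate controlled by the envelope $\Phi_s$. Everything else is either a routine convexity estimate (the upper bound for $\hat I_f$) or the standard principle that local boundedness from above forces continuity of a proper convex function.
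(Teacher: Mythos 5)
The paper does not prove this result; it is quoted verbatim from Rockafellar (\cite[Theorem 2]{MR0310612}), so there is no in-paper argument to compare against. Your proof is correct, and it is in substance the classical argument from that reference: the integrable envelope $\Phi_s$ obtained by dominating the ball by a polytope with vertices in the open $r$-ball, the resulting uniform upper bound on an $L^\infty$-neighborhood combined with the affine minorant $\langle u^\ast(\cdot),\cdot\rangle - f^\ast(\cdot,u^\ast(\cdot))$ to get properness and hence norm-continuity of the convex functional $\hat I_f$, and the measurable selection of $\partial f_t(\bar u(t))$ made integrable by the local Lipschitz estimate. Two small points you should make explicit if you write this up: $\Phi_s\in L^1$ also needs the integrable lower bound $\Phi_s\ge f(\cdot,\bar u(\cdot)+x_0)$ (not just the upper bound by $\sum_i[f(\cdot,\bar u(\cdot)+x_i)]^+$), and the existence of the measurable selection of $t\rightrightarrows\partial f_t(\bar u(t))$ rests on the graph-measurability of the subdifferential of a normal convex integrand together with the completeness of $(T,\Sigma,\mu)$, which the paper assumes precisely so that such selection theorems apply.
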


The next  result deals with measurable selections in  Suslin spaces.

\begin{proposition}
	\cite[Theorem III.22]{MR0467310}\label{measurableselectiontheorem} Let $S$
	be a Suslin space and $G:T\rightrightarrows S$ be a measurable multifunction
	with non-empty values. Then there exists a sequence $(g_{n})$ of $(\Sigma , 
	\mathcal{B}(S))$-measurable functions such that $%
	\{g_{n}(t)\}_{n\geq 1}$ is dense in $G(t)$ for every $t\in T.$
\end{proposition}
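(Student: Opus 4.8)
The statement is the classical Castaing representation theorem, and the plan is to reduce it to the separable metric case by exploiting the very definition of a Suslin space. I fix a Polish space $P$ and a continuous surjection $\phi:P\to S$, and lift $G$ to the multifunction $\tilde{G}:T\rightrightarrows P$ defined by $\tilde{G}(t):=\phi^{-1}(G(t))$. Since $\phi$ is onto and $G(t)\neq\emptyset$, the values $\tilde{G}(t)$ are nonempty, and because $\grafo\tilde{G}=(\mathrm{id}_T\times\phi)^{-1}(\grafo G)$ with $\mathrm{id}_T\times\phi$ being $(\Sigma\otimes\mathcal{B}(P),\Sigma\otimes\mathcal{B}(S))$-measurable (as $\phi$ is continuous, hence Borel, the preimage of a measurable rectangle $A\times B$ is $A\times\phi^{-1}(B)$), the graph-measurability of $G$ passes to $\tilde{G}$, i.e. $\grafo\tilde{G}\in\Sigma\otimes\mathcal{B}(P)$. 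Thus it suffices to produce a Castaing representation of $\tilde{G}$ in the Polish target $P$ and then push it forward through $\phi$.

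In the Polish setting I would argue as follows. Fix a countable base $\{V_k\}_{k\geq1}$ of $P$ and set $T_k:=\{t\in T:\tilde{G}(t)\cap V_k\neq\emptyset\}$. On each $T_k$ the restricted multifunction $t\mapsto\tilde{G}(t)\cap V_k$ has nonempty values and measurable graph $\grafo\tilde{G}\cap(T_k\times V_k)$, so the von Neumann--Aumann measurable selection theorem furnishes a $(\Sigma,\mathcal{B}(P))$-measurable selection $h_k$ of it; completing $h_k$ off $T_k$ by a fixed measurable selection $h_0$ of $\tilde{G}$ (obtained by the same selection theorem applied to $\tilde{G}$ itself), each $h_k$ becomes a measurable selection of $\tilde{G}$ defined on all of $T$. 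For the density, fix $t$ and $p\in\tilde{G}(t)$: every basic neighborhood $V_k\ni p$ satisfies $t\in T_k$ and hence $h_k(t)\in\tilde{G}(t)\cap V_k$; letting $V_k$ shrink to $p$ shows that $\{h_k(t)\}_{k\geq0}$ is dense in $\tilde{G}(t)$.

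The main obstacle is precisely the measurability of the hitting sets $T_k$, since $T_k=\mathrm{proj}_T\big(\grafo\tilde{G}\cap(T\times V_k)\big)$ is the projection onto $T$ of a set in $\Sigma\otimes\mathcal{B}(P)$, and such projections are in general \emph{not} product-measurable. Here the Polish structure of $P$ is essential: the projection is generated from measurable rectangles by the Suslin operation and is therefore an analytic (Suslin) subset of $T$, hence universally measurable, and so it belongs to $\Sigma$ by the completeness of $\mu$. This measurable projection theorem (and the selection theorem it underpins) is the genuine analytic-set ingredient on which the whole argument rests; the remaining steps are bookkeeping.

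Finally I would transport the representation back to $S$ by setting $g_k:=\phi\circ h_k$. Each $g_k$ is $(\Sigma,\mathcal{B}(S))$-measurable as the composition of a measurable map with the continuous map $\phi$, and $g_k(t)\in\phi(\tilde{G}(t))=\phi(\phi^{-1}(G(t)))=G(t)$ (using surjectivity of $\phi$), so $g_k$ is a selection of $G$. Density is preserved under $\phi$: applying $\phi(\overline{D})\subseteq\overline{\phi(D)}$ to the dense set $D=\{h_k(t)\}_k\subseteq\tilde{G}(t)$ yields $G(t)=\phi(\tilde{G}(t))\subseteq\overline{\{g_k(t)\}_k}$, which is exactly the required density of $\{g_k(t)\}_{k\geq1}$ in $G(t)$ for every $t\in T$.
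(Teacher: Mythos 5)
Your argument is correct, but note that the paper offers no proof of this proposition at all: it is stated as a citation of the classical Castaing representation theorem (Castaing--Valadier, Theorem III.22), so there is nothing in the paper to compare against. What you have written is essentially the standard proof from that reference: reduce to the Polish case via the continuous surjection $\phi:P\to S$ defining the Suslin structure, build the representation there from the hitting sets of a countable base, and push forward by $\phi$. The two ingredients you treat as black boxes --- the measurable projection theorem (which needs the completeness of $(T,\Sigma,\mu)$, an assumption the paper does make) and the von Neumann--Aumann selection theorem for graph-measurable multifunctions --- are exactly the nontrivial inputs on which the cited proof also rests, so invoking them is legitimate and not circular; all the surrounding steps (measurability of $\grafo\tilde{G}$ as a preimage, density via shrinking basic neighborhoods, preservation of density under the continuous surjection) are carried out correctly.
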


\begin{lemma}
	\label{lemma2}Let $(F,\Vert \cdot\Vert _{F})\subset X$ be a finite-dimensional Banach subspace, with $(F^{\ast },\Vert \cdot \Vert _{F^{\ast }})$ being its dual, and denote by $P:X\rightarrow F$ and $P^{\ast }:F^{\ast }\rightarrow X^{\ast }$ the continuous linear projection and its adjoint mapping, respectively.
	 Then there exists a neighborhood $W\subset X$ of $0$ (depending only on $P$ and $F$) such that, for every
	integrable function\ $u^{\ast }(\cdot ):T\rightarrow F^{\ast }$, the
	composite function $P^{\ast }\circ u^{\ast }(\cdot )$ is integrable and
	satisfies 
	$$\sigma _{W}(u^{\ast }(t)\circ P)\leq \Vert u^{\ast }(t)\Vert
	_{F^{\ast }}.$$
\end{lemma}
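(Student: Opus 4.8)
The plan is to reduce both assertions to elementary properties of the adjoint $P^{\ast}$ together with the identity relating the support function of $W$ to that of its image $P(W)$. First I would fix the candidate neighborhood. Since $P$ is continuous from $(X,\tau_X)$ into the finite-dimensional space $(F,\|\cdot\|_F)$, the set
\[
W:=\{x\in X:\ \|Px\|_F<1\}=P^{-1}\big(\{y\in F:\ \|y\|_F<1\}\big)
\]
is an open, convex, symmetric neighborhood of $0$, and it manifestly depends only on $P$ and on the norm of $F$ (not on $u^{\ast}$), as required. Moreover $P(W)\subseteq B_F:=\{y\in F:\ \|y\|_F\le 1\}$.

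For the inequality, I would use the defining relation of the adjoint, $P^{\ast}v^{\ast}=v^{\ast}\circ P$, and compute, for $v^{\ast}\in F^{\ast}$,
\[
\sigma_W(v^{\ast}\circ P)=\sup_{x\in W}\langle v^{\ast},Px\rangle=\sigma_{P(W)}(v^{\ast})\le \sigma_{B_F}(v^{\ast})=\|v^{\ast}\|_{F^{\ast}},
\]
where the inequality uses $P(W)\subseteq B_F$ and the last equality is the definition of the dual norm. Applying this pointwise to $v^{\ast}=u^{\ast}(t)$ yields the claimed bound $\sigma_W(u^{\ast}(t)\circ P)\le\|u^{\ast}(t)\|_{F^{\ast}}$ for every $t$.

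It remains to prove that $P^{\ast}\circ u^{\ast}$ is integrable, which I would do by checking membership in $\mathcal{L}_{w^{\ast}}^{1}(T,X^{\ast})$ (the strong measurability being automatic as well, since applying the continuous $P^{\ast}$ to simple functions approximating $u^{\ast}$ produces $X^{\ast}$-valued simple functions converging pointwise to $P^{\ast}\circ u^{\ast}$). The $w^{\ast}$-measurability is immediate: for each $x\in X$ the map $t\mapsto\langle P^{\ast}u^{\ast}(t),x\rangle=\langle u^{\ast}(t),Px\rangle$ is a fixed continuous linear functional applied to the measurable $F^{\ast}$-valued function $u^{\ast}$, hence measurable. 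For finiteness of the upper integrals, fix a bounded balanced $B\subseteq X$. As above $\sigma_B(P^{\ast}u^{\ast}(t))=\sigma_{P(B)}(u^{\ast}(t))$, and since $P$ is a continuous linear map it sends $B$ to a bounded set $P(B)\subseteq F$, say $P(B)\subseteq\{y:\ \|y\|_F\le M_B\}$; hence $\sigma_B(P^{\ast}u^{\ast}(t))\le M_B\,\|u^{\ast}(t)\|_{F^{\ast}}$. The right-hand side is integrable because $u^{\ast}$ is integrable into the finite-dimensional $F^{\ast}$, so $M_B\|u^{\ast}(\cdot)\|_{F^{\ast}}\in\mathcal{D}_{\sigma_B(P^{\ast}u^{\ast}(\cdot))}$ and the upper integral $\int_T^{\ast}\sigma_B(P^{\ast}u^{\ast}(t))\,d\mu(t)$ is finite.

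The computations are short; the only points requiring care are (i) getting the direction of the support-function inequality right, which forces the choice $W=P^{-1}(\text{unit ball})$ rather than its image, and (ii) the fact that the same $W$ works for all $u^{\ast}$ simultaneously, i.e.\ its independence of $u^{\ast}$, which is exactly what makes the bound uniform and hence usable in Theorem \ref{teorema2}. Finiteness of the upper integral is not an obstacle precisely because $\sigma_B(P^{\ast}u^{\ast}(\cdot))$ is dominated by an integrable scalar function, so no measurability of $\sigma_B(P^{\ast}u^{\ast}(\cdot))$ itself is needed.
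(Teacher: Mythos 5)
Your proof is correct and follows essentially the same route as the paper: you choose $W$ so that $P(W)$ lies in the unit ball of $F$ (the paper phrases this as $\Vert P(x)\Vert_F\leq\sigma_{W^{\circ}}(x)$, you take $W=P^{-1}$ of the open unit ball) and then run the identical support-function computation $\sigma_W(u^{\ast}(t)\circ P)=\sigma_{P(W)}(u^{\ast}(t))\leq\Vert u^{\ast}(t)\Vert_{F^{\ast}}$. The only difference is that you spell out the integrability of $P^{\ast}\circ u^{\ast}$ (strong and $w^{\ast}$-measurability plus domination of $\sigma_B(P^{\ast}u^{\ast}(\cdot))$ by the integrable function $M_B\Vert u^{\ast}(\cdot)\Vert_{F^{\ast}}$), a detail the paper dispatches in one sentence by saying the composite inherits measurability from $u^{\ast}$.
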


\begin{proof}
	Since the projection mapping $P:X\rightarrow F$ is continuous, there exists a neighborhood $W\in \mathcal{N}_{0}(\tau)$ such that $\Vert P(x)\Vert
	_{F}\leq \sigma _{W^{\circ }}(x)$ for all $x\in X$. Hence, 
	\begin{equation*}
		\sigma _{W}(u^{\ast }(t)\circ P)=\sup_{x\in W}\langle u^{\ast
		}(t),P(x)\rangle \leq \sup_{y\in B_{F}(0,1)}\langle u^{\ast
		}(t),y\rangle =\Vert u^{\ast }(t)\Vert _{F^{\ast }},
	\end{equation*}
	where $B_{F}(0,1)$ is the unit ball in $F$.
	We are done since the function $P^{\ast }\circ u^{\ast }(\cdot )$ inherits
	the measurability from $u^{\ast }.$\qed
\end{proof}
\begin{lemma}
	\label{medibilidadgrafo}Assume that both $X$ and $X^{\ast }$ are Suslin and
	let $u^{\ast }:T\rightarrow X^{\ast }$ be a weak*-measurable function. Set 
	\begin{align*}
		\mathfrak{G}:=\{(t,x^{\ast },y^{\ast },v^{\ast })\in T\times X^{\ast }\times
		X^{\ast }\times F^{\ast }\mid x^{\ast }+y^{\ast }+P^{\ast }(v^{\ast
		})=u^{\ast }(t)\}.
	\end{align*}
	Then $ \mathfrak{G}\in \Sigma \otimes \mathcal{B}(X^{\ast }\times X^{\ast
	}\times F^{\ast })$, and for every measurable multifunctions $C_{1},C_{2}:T
	\rightrightarrows X^{\ast }$, $C_{3}:T\rightrightarrows F^{\ast },$ the
	multifunction $C:T\rightrightarrows X^{\ast }\times X^{\ast }\times F^{\ast
	} $, defined as 
	\begin{equation*}
		(x^{\ast },y^{\ast },z^{\ast })\in C(t)\Leftrightarrow (x^{\ast },y^{\ast
		},z^{\ast })\in C_{1}(t)\times C_{2}(t)\times C_{3}(t)\ \text{and }u^{\ast
		}(t)=x^{\ast }+y^{\ast }+P^{\ast }(z^{\ast }),
	\end{equation*}
	is measurable.
\end{lemma}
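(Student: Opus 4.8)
The plan is to reduce the vector identity defining $\mathfrak{G}$ to a countable family of scalar conditions, exploiting that $X$, being Suslin, is separable. First I would fix a countable dense set $\{x_{k}\}_{k\in\N}\subset X$. Since every element of $X^{*}$ is $\tau_{X}$-continuous, the weak*-continuous functionals $\langle\cdot,x_{k}\rangle$ separate the points of $X^{*}$, so the equation $x^{*}+y^{*}+P^{*}(v^{*})=u^{*}(t)$ is equivalent to its pairing with each $x_{k}$. Using $\langle P^{*}(v^{*}),x_{k}\rangle=\langle v^{*},P(x_{k})\rangle$, this gives
\begin{equation*}
\mathfrak{G}=\bigcap_{k\in\N}\big\{(t,x^{*},y^{*},v^{*}):\langle x^{*},x_{k}\rangle+\langle y^{*},x_{k}\rangle+\langle v^{*},P(x_{k})\rangle=\langle u^{*}(t),x_{k}\rangle\big\}.
\end{equation*}
For fixed $k$, the defining function splits as the sum of $t\mapsto-\langle u^{*}(t),x_{k}\rangle$, which is $\Sigma$-measurable by the weak*-measurability of $u^{*}$, and $(x^{*},y^{*},v^{*})\mapsto\langle x^{*},x_{k}\rangle+\langle y^{*},x_{k}\rangle+\langle v^{*},P(x_{k})\rangle$, which is continuous on $X^{*}\times X^{*}\times F^{*}$ and hence $\mathcal{B}(X^{*}\times X^{*}\times F^{*})$-measurable. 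Composing each with the appropriate (measurable) coordinate projection shows that their sum is $\Sigma\otimes\mathcal{B}(X^{*}\times X^{*}\times F^{*})$-measurable; thus each set in the intersection lies in $\Sigma\otimes\mathcal{B}(X^{*}\times X^{*}\times F^{*})$, and so does $\mathfrak{G}$.

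For the measurability of $C$, I would write $\grafo C$ as a finite intersection of $\mathfrak{G}$ with the pullbacks of the graphs $\grafo C_{1},\grafo C_{2},\grafo C_{3}$. Consider the continuous maps $\Theta_{1}(t,x^{*},y^{*},v^{*})=(t,x^{*})$, $\Theta_{2}(t,x^{*},y^{*},v^{*})=(t,y^{*})$ and $\Theta_{3}(t,x^{*},y^{*},v^{*})=(t,v^{*})$. Each $\Theta_{i}$ is measurable into the corresponding product $\sigma$-algebra, since on a measurable rectangle $A\times B$ one has $\Theta_{1}^{-1}(A\times B)=A\times\pi_{1}^{-1}(B)$ with $\pi_{1}$ the (continuous, hence Borel) coordinate projection, so that $\pi_{1}^{-1}(B)\in\mathcal{B}(X^{*}\times X^{*}\times F^{*})$, and similarly for $\Theta_{2},\Theta_{3}$. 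As $C_{1},C_{2},C_{3}$ are measurable, $\grafo C_{1},\grafo C_{2}\in\Sigma\otimes\mathcal{B}(X^{*})$ and $\grafo C_{3}\in\Sigma\otimes\mathcal{B}(F^{*})$, whence $\Theta_{i}^{-1}(\grafo C_{i})\in\Sigma\otimes\mathcal{B}(X^{*}\times X^{*}\times F^{*})$. The identity
\begin{equation*}
\grafo C=\mathfrak{G}\cap\Theta_{1}^{-1}(\grafo C_{1})\cap\Theta_{2}^{-1}(\grafo C_{2})\cap\Theta_{3}^{-1}(\grafo C_{3})
\end{equation*}
then exhibits $\grafo C$ as a finite intersection of sets in $\Sigma\otimes\mathcal{B}(X^{*}\times X^{*}\times F^{*})$, proving that $C$ is measurable.

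The hard part will be the first assertion, precisely because $u^{*}$ is only weak*-measurable and $(X^{*},w^{*})$ need not be metrizable: one cannot treat $u^{*}$ as strongly measurable, nor can one freely invoke the identity $\mathcal{B}(X^{*}\times X^{*}\times F^{*})=\mathcal{B}(X^{*})\otimes\mathcal{B}(X^{*})\otimes\mathcal{B}(F^{*})$, which may fail outside the metrizable setting. The countable-testing device against the dense sequence $\{x_{k}\}$ circumvents both obstacles at once, turning the single vector equation into countably many jointly measurable scalar equations and avoiding any appeal to the product-$\sigma$-algebra formula.
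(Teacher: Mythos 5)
Your proof is correct, and it is a genuinely streamlined variant of the paper's argument. Both proofs rest on the same core device: since $X$ is Suslin, hence separable, the single vector equation $x^{*}+y^{*}+P^{*}(v^{*})=u^{*}(t)$ can be replaced by countably many scalar conditions obtained by pairing against a countable dense set in $X$ (the functionals $\langle\cdot,x_{k}\rangle$ separate points of $X^{*}$ because every $z^{*}\in X^{*}$ is $\tau_{X}$-continuous). The difference is in how the $t$-dependence is handled. The paper first proves joint measurability of $(t,x^{*},y^{*},v^{*})\mapsto x^{*}+y^{*}+P^{*}(v^{*})-u^{*}(t)$ when $u^{*}$ is a simple function, then approximates a general $u^{*}$ by simple functions $u^{*}_{k}$ (invoking \cite[Theorem III.36]{MR0467310}) and writes $\mathfrak{G}$ as a $\liminf$-type expression $\bigcap_{v\in D}\bigcap_{n}\bigcup_{j}\bigcap_{k\geq j}\{|\langle x^{*}+y^{*}+P^{*}(v^{*})-u^{*}_{k}(t),v\rangle|<\epsilon_{n}\}$. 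You bypass the simple-function approximation entirely: weak*-measurability of $u^{*}$ means precisely that each $t\mapsto\langle u^{*}(t),x_{k}\rangle$ is $\Sigma$-measurable, so $\mathfrak{G}$ is an exact countable intersection of zero-sets of jointly measurable real functions. This is shorter, uses only the definition of weak*-measurability rather than a representation theorem, and sidesteps the question of whether weak*-measurability upgrades to strong measurability in this setting --- a point the paper glosses over when it invokes Theorem III.36. Your explicit treatment of the second assertion, writing $\grafo C=\mathfrak{G}\cap\Theta_{1}^{-1}(\grafo C_{1})\cap\Theta_{2}^{-1}(\grafo C_{2})\cap\Theta_{3}^{-1}(\grafo C_{3})$ and checking measurability of the $\Theta_{i}$ on rectangles, is also more detailed than the paper, which leaves that step implicit. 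Your closing caution about $\mathcal{B}(X^{*}\times X^{*}\times F^{*})$ versus $\mathcal{B}(X^{*})\otimes\mathcal{B}(X^{*})\otimes\mathcal{B}(F^{*})$ is well taken and your argument correctly avoids needing that identity.
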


\begin{proof}
	Consider the functions $g$ and $h$ defined as 
	$$g(t,x^{\ast },y^{\ast },v^{\ast })=x^{\ast }+y^{\ast
	}+P^{\ast }(v^{\ast })-u^{\ast }(t),\,\,(t,x^{\ast },y^{\ast },v^{\ast })\in T\times X^{\ast }\times
		X^{\ast }\times F^{\ast },$$ 
	$$
	h(x^{\ast },y^{\ast },v^{\ast
	})=x^{\ast }+y^{\ast }+P^{\ast }(v^{\ast }),\,\, (x^{\ast },y^{\ast },v^{\ast })\in  X^{\ast }\times
		X^{\ast }\times F^{\ast }.
	$$
	We claim that $g$ is $(\Sigma
	\otimes \mathcal{B}(X^{\ast }\times X^{\ast }\times F^{\ast }),\mathcal{B}%
	(X^{\ast}))$-measurable. First, assume that $u^*$ is a simple function; that is,
	there exists a measurable partition of $T$, $\{T_i\}_{i=1}^n$, and elements $%
	u^*_i\in X$ such that $u^*(t)=\sum u_i^* \mathds{1}_{T_i}(t)$. Then it is easy
	to see that, for every open set $U$ on $X^*$, 
	$$g^{-1}(U)=\bigcup_{i=1}^n T_i
	\times h^{-1}(U+u_i^*) \in \Sigma \otimes \mathcal{B}(X^{\ast }\times
	X^{\ast }\times F^{\ast }).$$ Therefore $g$ is $(\Sigma
	\otimes \mathcal{B}(X^{\ast }\times X^{\ast }\times F^{\ast }),\mathcal{B}%
	(X^{\ast}))$-measurable. More generally, since $u^*$ is measurable, we can write it as the limit of a sequence of
	simple functions $u^*_n$, by 
	\cite[Theorem III.36 ]{MR0467310}. So, by considering a countable dense set $D$ on $X$ and some $\epsilon_n \to 0^+$, we
 write
	\begin{equation*}
		\mathfrak{G} = \bigcap\limits_{v \in D} \bigcap\limits_{n  \geq 1 }
		\bigcup\limits_{j \in \mathbb{N} } \bigcap\limits_{ k \geq j} \big\{ (
		t,x^{\ast },y^{\ast },v^{\ast }) \mid | \langle x^{\ast }+y^{\ast }+P^{\ast }(v^{\ast })-u_k^{\ast
		}(t) , v \rangle | < \epsilon_n \big\},
	\end{equation*}
and with this we conclude, thanks to the first part of the proof.\qed 
\end{proof}

\section{Characterizations via $\protect\varepsilon$-subdifferentials\label{formulaeconvexfunctionals}}
In this section, we characterize the $\epsilon$-subdifferential ($\epsilon\ge 0$) of the convex function $I_{f}:X\to  \overline{\mathbb{R}}$, defined in (\ref{ifx}) by  
\begin{equation*}
	 I_{f}(x)=\int_{T}f(t,x)d\mu (t).
\end{equation*}
where $X,\,X^{\ast }$ are two lcs paired in duality, and\ $f:T\times X\rightarrow \overline{\mathbb{R}}$ is a convex
normal integrand with respect to $\Sigma \otimes \mathcal{B}(X,\tau _{X}).$

We start with the main result of this section, in which we use the following notation, for $x\in X$ and $ \eta\ge0$,
$$
\mathcal{F}(x):=\{L\subseteq X:L\text{ is a finite-dimensional linear
		space and }x\in L\},$$ 
		$$
		\mathcal{I}(\eta ):=\left\{\ell \in L^{1}(T,\mathbb{R}%
	_{+}):\int_T \ell(t) d\mu(t) \leq \eta \right\}.
	$$

\begin{theorem}
	\label{teorema2} For every $x\in X$ and $\varepsilon \geq 0$ we have 
	\begin{align}
		\partial _{\varepsilon }I_{f}(x)& =\bigcap\limits_{L\in \mathcal{F}%
			(x)}\bigcup\limits_{\substack{ \epsilon =\epsilon _{1}+\epsilon _{2}  \\ %
				\epsilon _{1},\epsilon _{2}\geq 0  \\ \ell \in \mathcal{I}(\epsilon _{1})}}%
		\left\{ \int_{T}\partial _{\ell (t)}(f_{t}+\delta _{\aff\{L\cap 
			\dom I_{f}\}})(x)d\mu (t)+N_{L\cap {\dom}I_{f}}^{\epsilon
			_{2}}(x)\right\}  \label{formula2.1} \\
		& =\bigcap\limits_{L\in \mathcal{F}(x)}\bigcup\limits_{\substack{  \\ \ell
				\in \mathcal{I}(\epsilon )}} \int_{T}\partial _{\ell
			(t)}(f_{t}+\delta _{L\cap {\dom}I_{f}})(x)d\mu (t).
		\label{formula2.2}
	\end{align}%
	\end{theorem}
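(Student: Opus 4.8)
The plan is to establish the two displayed identities through the cyclic chain of inclusions
\[
\partial _{\varepsilon }I_{f}(x)\ \subseteq\ (\ref{formula2.1})\ \subseteq\ (\ref{formula2.2})\ \subseteq\ \partial _{\varepsilon }I_{f}(x),
\]
working with a fixed $L\in \mathcal{F}(x)$ inside the intersection throughout. We may assume $x\in \dom I_{f}$, since otherwise all three sets are empty. Fix $L\in \mathcal{F}(x)$ and put $A:=\aff \{L\cap \dom I_{f}\}$, so that $x\in L\cap \dom I_{f}\subseteq A$ and $A=x+L_{0}$ for the linear subspace $L_{0}$ parallel to $A$. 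The reduction to $A$ is the decisive device: although $\dom (I_{f}|_{L})$ may have empty interior in $L$, it has nonempty relative interior in $A$, and this relative continuity is exactly what the finite-dimensional results of Rockafellar require; this is why $\aff \{L\cap \dom I_{f}\}$ enters (\ref{formula2.1}).

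To prove $\partial _{\varepsilon }I_{f}(x)\subseteq (\ref{formula2.1})$, take $x^{\ast }\in \partial _{\varepsilon }I_{f}(x)$; restricting the subgradient inequality to $y\in A$ gives $x^{\ast }|_{L_{0}}\in \partial _{\varepsilon }(I_{f}|_{A})(x)$. I would transport the situation to the finite-dimensional Banach space $L_{0}$ by translation, setting $\bar{f}(t,v):=f_{t}(x+v)+\delta _{L_{0}}(v)$, so that $I_{\bar{f}}(v)=I_{f}(x+v)$ on $L_{0}$. Choosing $v_{0}$ with $x+v_{0}\in \ri (L\cap \dom I_{f})$, Theorem \ref{conjugateintegrandRock2} applied at the constant function $\bar{u}\equiv x+v_{0}$ supplies the integrability of $f(\cdot ,x+v_{0}+z)$ for small $z$, hence both the finiteness of $\hat{I}_{\bar{f}^{\ast }}$ at some point and the $L^{\infty }$-continuity of $\hat{I}_{\bar{f}}$ at $\bar{u}$. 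Since $\bar{u}$ lies in the range of the embedding of $L_{0}$ into $L^{\infty }(T,L_{0})$ as constant functions (call it $\iota $) and $I_{\bar{f}}|_{L_{0}}=\hat{I}_{\bar{f}}\circ \iota $, the standard exact $\varepsilon $-subdifferential chain rule under this continuity condition produces $u^{\ast }\in \partial _{\varepsilon }\hat{I}_{\bar{f}}(\iota (0))$ with $\iota ^{\ast }(u^{\ast })=x^{\ast }|_{L_{0}}$. Proposition \ref{dualidadRock} then decomposes $u^{\ast }=\ell ^{\ast }+s^{\ast }$ with $\ell ^{\ast }(t)\in \partial _{\varepsilon _{1}(t)}\bar{f}(t,0)$ a.e., $s^{\ast }\in N_{\dom \hat{I}_{\bar{f}}}^{\varepsilon _{2}}(\iota (0))$ and $\int_{T}\varepsilon _{1}\,d\mu +\varepsilon _{2}\leq \varepsilon $; computing $\iota ^{\ast }$ shows $\iota ^{\ast }(\ell ^{\ast })=\int_{T}\ell ^{\ast }(t)\,d\mu $ belongs to the Aumann integral of $\partial _{\varepsilon _{1}(t)}(f_{t}|_{A})(x)$ and $\iota ^{\ast }(s^{\ast })\in N_{L\cap \dom I_{f}}^{\varepsilon _{2}}(x)$, both in $L_{0}^{\ast }$. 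Finally I would lift to $X^{\ast }$ using the continuous projection $P:X\to L_{0}$ and its adjoint $P^{\ast }$: set $m(t):=P^{\ast }(\ell ^{\ast }(t))$ and $n_{0}:=P^{\ast }(\iota ^{\ast }(s^{\ast }))$, where Lemma \ref{lemma2} gives the integrability of $m(\cdot )$; since $P$ is the identity on $L_{0}$ one checks $m(t)\in \partial _{\varepsilon _{1}(t)}(f_{t}+\delta _{A})(x)$ and $n_{0}\in N_{L\cap \dom I_{f}}^{\varepsilon _{2}}(x)$ in $X^{\ast }$. The integral $\int_{T}m\,d\mu $ agrees with $x^{\ast }$ on $L_{0}$, so the defect $z^{\ast }:=x^{\ast }-\int_{T}m\,d\mu -n_{0}$ annihilates $L_{0}$; because $L\cap \dom I_{f}\subseteq x+L_{0}$ we get $z^{\ast }\in L_{0}^{\perp }\subseteq N_{L\cap \dom I_{f}}^{0}(x)$, whence $n:=n_{0}+z^{\ast }\in N_{L\cap \dom I_{f}}^{\varepsilon _{2}}(x)$ and $\int_{T}m\,d\mu +n=x^{\ast }$, placing $x^{\ast }$ in the $L$-set of (\ref{formula2.1}).

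For $(\ref{formula2.1})\subseteq (\ref{formula2.2})$, fix $L$ and a representative $x^{\ast }=\int_{T}m\,d\mu +n$ with $m(t)\in \partial _{\ell (t)}(f_{t}+\delta _{A})(x)$ and $n\in N_{L\cap \dom I_{f}}^{\epsilon _{2}}(x)$. Picking $T_{0}\in \Sigma $ with $0<\mu (T_{0})<\infty $ and using the scaling identity $\tfrac{1}{\lambda }N_{C}^{\eta }(x)=N_{C}^{\eta /\lambda }(x)$, I distribute $n$ as $n(t):=\tfrac{1}{\mu (T_{0})}\,n\,\mathds{1}_{T_{0}}(t)\in N_{L\cap \dom I_{f}}^{\ell _{2}(t)}(x)$ with $\ell _{2}:=\tfrac{\epsilon _{2}}{\mu (T_{0})}\mathds{1}_{T_{0}}\in \mathcal{I}(\epsilon _{2})$; the elementary rule $\partial _{a}g_{1}+\partial _{b}g_{2}\subseteq \partial _{a+b}(g_{1}+g_{2})$ together with $\delta _{A}+\delta _{L\cap \dom I_{f}}=\delta _{L\cap \dom I_{f}}$ yields $m(t)+n(t)\in \partial _{\ell (t)+\ell _{2}(t)}(f_{t}+\delta _{L\cap \dom I_{f}})(x)$, while $\int_{T}(m+n)\,d\mu =x^{\ast }$ and $\int_{T}(\ell +\ell _{2})\,d\mu \leq \epsilon $. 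Lastly, $(\ref{formula2.2})\subseteq \partial _{\varepsilon }I_{f}(x)$ is a direct check: given $x^{\ast }$ in the $L$-set for every $L$ and any $y\in \dom I_{f}$, take $L=\spn \{x,y\}$, so $y\in L\cap \dom I_{f}$, and integrate the inequalities $\langle m(t),y-x\rangle \leq f_{t}(y)-f_{t}(x)+\ell (t)$ (legitimate since $f_{\cdot }(x),f_{\cdot }(y)\in L^{1}(T,\mathbb{R})$) to obtain $\langle x^{\ast },y-x\rangle \leq I_{f}(y)-I_{f}(x)+\varepsilon $; the case $y\notin \dom I_{f}$ is trivial.

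The principal obstacle is the first inclusion, and within it two points require the most care. The first is securing the continuity qualification: this is what forces the passage to the affine hull $A$, guaranteeing $\ri (L\cap \dom I_{f})\neq \emptyset $ so that Theorem \ref{conjugateintegrandRock2} and Proposition \ref{dualidadRock} apply through the $L^{\infty }$-model and the chain rule for $\iota $. The second is the faithful lifting of the finite-dimensional data from $L_{0}^{\ast }$ back to $X^{\ast }$, where Lemma \ref{lemma2} (integrability of $P^{\ast }\circ \ell ^{\ast }$) and the absorption of the $L_{0}^{\perp }$-defect into the normal cone are indispensable; this absorption, available precisely because $L\cap \dom I_{f}\subseteq x+L_{0}$, is what upgrades a decomposition valid only on $L_{0}$ to an exact identity in $X^{\ast }$.
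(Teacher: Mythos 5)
Your proposal is correct and follows essentially the same route as the paper's proof: the finite-dimensional reduction to $\aff\{L\cap\dom I_{f}\}$ to secure relative continuity, Theorems \ref{conjugateintegrandRock} and \ref{conjugateintegrandRock2} together with Proposition \ref{dualidadRock} applied through the constant-function embedding into $L^{\infty}$, and the lift back to $X^{\ast}$ via $P^{\ast}$ and Lemma \ref{lemma2} with the annihilator defect absorbed into the normal cone. The only differences are cosmetic (you translate to $L_{0}$ rather than normalizing $x=0$, and you spell out the two easy inclusions that the paper dismisses as immediate).
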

\begin{proof} We fix  $x\in X$ and $\varepsilon \geq 0$. First, it can be easily checked that the right-hand side in  \eqref{formula2.1} is included in the right-hand side of \eqref{formula2.2}, which is in turn included in  $\partial _{\varepsilon }I_{f}(x)$. Thus, if $\partial _{\varepsilon }I_{f}(x)$ is empty, then we are done with the proof of the theorem. In other words, we only need to prove the inclusion ``$\subset$" in \eqref{formula2.1} when $\partial _{\varepsilon }I_{f}(x)$ is nonempty; hence, $x\in  {\dom}I_{f}$. 

We may suppose that $x=0$ (the origin vector in $X$). Let us first consider the case when
$$L\cap {\dom}I_{f}=\{0\}, \text{ for every }L\in \mathcal{F}(0),$$ 
which easily leads to ${\dom}I_{f}=\{0\}$. Then, since $0\in \dom f_t$ for ae $t\in T$, we obtain that 
$$
X^*=\partial _{\varepsilon }I_{f}(0)=\int_{T}\partial _{\ell (t)}(f_{t}+\delta _{\aff\{L\cap 
			\dom I_{f}\}})(0)d\mu=\int_{T}\partial _{\ell
			(t)}(f_{t}+\delta _{L\cap {\dom}I_{f}})(0)d\mu ,
$$
and (\ref{formula2.1}) and (\ref{formula2.2}) hold trivially. 

Now, we assume that $L\cap {\dom}I_{f}\ne\{0\},$ for some $L\in \mathcal{F}(0)$; that is, 
$$
\mathcal{F}^*(0):=\{L\in \mathcal{F}(0):L\cap {\dom}I_{f}\not=\{0\}\}\ne \emptyset.
$$ 
Observe that for $L\in \mathcal{F}(0)$ being such that $L\cap {\dom}I_{f}$ is a singleton, as $0\in  {\dom}I_{f}$ we have that $L\cap {\dom}I_{f}=\{0\}$, and so  
$$\partial _{\varepsilon }(I_{f}+\delta _{\spn\{L\cap {\dom}I_{f}\}})(0)=\partial _{\varepsilon }(I_{f}+\delta _{L\cap {\dom}I_{f}})(0)=X^*.
	$$ 
Consequently, we obtain that 
\begin{eqnarray*}
		\partial _{\varepsilon }I_{f}(0) &= &\bigcap\limits_{L\in \mathcal{F}(0)}\partial _{\varepsilon }(I_{f}+\delta _{ L\cap {\dom}I_{f}})(0) \\
		&=&\bigcap\limits_{L\in \mathcal{F}(0)}\partial _{\varepsilon }(I_{f}+\delta _{\spn\{L\cap {\dom}I_{f}\}})(0) \\
&=&\bigcap\limits_{L\in \mathcal{F}^*(0)}\partial _{\varepsilon }(I_{f}+\delta _{ \spn\{L\cap {\dom}I_{f}\}})(0).
	\end{eqnarray*}	
In other words, we only need to characterize  the set $\partial _{\varepsilon }(I_{f}+\delta _{ \spn\{L\cap {\dom}I_{f}\}})(0)$, for a fixed $L\in \mathcal{F}^*(0)$. For this aim we denote 
$$F:=\spn\{L\cap {\dom}I_{f}\},$$ 
$$\hat{f}(t,z):=f_{\mid T\times F}(t,z),\,\, (t,z)\in T\times F,$$
and take $x^*\in \partial _{\varepsilon }(I_{f}+\delta _F)(0)$. We also consider a continuous projection $P:X\rightarrow F$, with an adjoint $P^*:F^*\rightarrow X^*$, together with the immersion mapping $i_F:F\rightarrow X$ of $F$ in $X$. Then the restriction of $x^*$ to $F$, denoted by  $x_{|_{F}}^{\ast}$, satisfies for all $z\in F$
\begin{eqnarray*}
\langle x_{|_{F}}^{\ast}, z\rangle&=&\langle x^{\ast }, i_F(z)\rangle \\
&\le& (I_{f}+\delta _{F})(i_F(z))-(I_{f}+\delta _{F})(0) +\epsilon \\
&=&I_{f}(i_F(z))-I_{f}(0)+\epsilon \\
&=&I_{\hat{f}}(z)-I_{\hat{f}}(0)+\epsilon; 
\end{eqnarray*}
that is, 
\begin{equation}\label{eqh}
x_{|_{F}}^{\ast}\in \partial _{\epsilon }I_{\hat{f}}(0).
\end{equation}
Because ${\dom}I_{\hat{f}}=F\cap {\dom}I_{f}\ni 0$, 
	$$
	\spn\{{\dom}I_{\hat{f}}\}=\aff\{{\dom}I_{f}\cap L\}=\spn\{{\dom}I_{f}\cap L\}=F,
	$$ 
	and $F$ is a finite-dimensional subspace, we know that $I_{\hat{f}}$ is continuous on $%
	\ri({\dom}I_{\hat{f}})$; that is to say, there exist $\eta >0$ and $%
	x_{0}\in {\dom}I_{f}\cap L$ such that $x_{0}+\eta \co\{\pm
	e_{i}\}\subseteq {\dom}I_{\hat{f}}$, where $\{\pm
	e_{i}\}$ is a given basis on $F$. Hence, if $h\in F$ belongs to $%
	\eta \co\{\pm e_{i}\}$ we have that $f(\cdot ,x_{0}+h)$ is integrable. So,
	by applying  Theorem \ref{conjugateintegrandRock2}, we obtain that $\hat{I}_{\hat{f}}$
	is continuous in a neighborhood of $x_{0}$ (in $L^{\infty }(T,F)$), and the
	hypotheses of  Theorem \ref{conjugateintegrandRock} are satisfied. Then, applying the composition rule to $I_{\hat{f}}$, (\ref{eqh}) implies that 
	$$
	x_{|_{F}}^{\ast}\in \partial _{\varepsilon
	}I_{\hat{f}}(0)=A^{\ast }(\partial _{\epsilon }\hat{I}_{\hat{f}}(0)),$$
	 where 
	$A:F\rightarrow L^{\infty }(T,F)$ is given by $A(h)=h\mathds{1}_{T}$, and $A^*: L^{\infty }(T,F)^*\rightarrow F^*$ is its adjoint defined for $h\in F$ as 
	$$%
	A^{\ast }(u^{\ast }+v^{\ast })(h)=\int_{T}\langle u^{\ast }(t),h\rangle +v^{\ast }(h%
	\mathds{1}_{T}),\,\,
	u^{\ast }\in L^{1}(T,F^{\ast }),\,\,v^{\ast }\in
	L^{\text{sing}}(T,F).$$ 
	Consequently,  taking into account Proposition \ref{dualidadRock}, there are $\alpha ^{\ast }\in L^{1}(T,F^{\ast })$
	and $\beta ^{\ast }\in L^{\text{sing}}(T,F)$ at the same time as 
	$\epsilon _{1},\epsilon _{2}\geq
	0 $, with  $ \epsilon _{1}+ \epsilon _{2}= \epsilon,$
 and $\ell \in \mathcal{I}(\epsilon _{1})$  such that
\begin{equation}\label{eqhh}
 x_{|_{F}}^{\ast }(h)=\int_{T}\langle \alpha ^{\ast }(t),h\rangle d\mu
	(t)+\beta ^{\ast }(h\mathds{1}_{T}),\,\, \text{ for all }h\in F,
\end{equation}
and 
$$\alpha^\ast (t)\in
	\partial _{\ell (t)}\hat{f}_{t}(0) \text{ ae},   
\,\,	\beta^{\ast }  \in N^{\epsilon _{2}}_{%
		{\dom}\hat{I}_{\hat{f}}}(0).$$
				Let $z^{\ast }\in
	{X^{\ast }}^T$ and $\lambda ^{\ast }\in {\R}^X$ be defined such that, for ae $t\in T$ and   $u\in X$,
	$$z
	^{\ast
	}(t)=P^{\ast }(\alpha ^{\ast }(t))=\alpha ^{\ast }(t)\circ P,\,\,  \lambda
	^{\ast }(u)=\beta^* (P(u)\mathds{1}_{T}).$$	
	Then we verify that $\lambda ^{\ast }\in X^{\ast }$ and, by  Lemma \ref{lemma2}, that $z^{\ast }\in L^{1}(T,X^{\ast })$. Now, from the fact
	that $\alpha^\ast (t)\in \partial _{\ell (t)}\hat{f}_{t}(0)$, ae, and the
	definition of $z^{\ast }$, we obtain that for ae $t\in T$ and all $z\in F$
	\begin{eqnarray*}
\langle z^{\ast }(t), z\rangle&=&\langle \alpha ^{\ast }(t), P(z)\rangle \\
&\le& \hat{f}_{t}(P(z))-\hat{f}_{t}(0) +\ell (t) \\
&=&(f_{t}+\delta _{F})(z)-(f_{t}+\delta _{F})(0)+\ell (t) . 
\end{eqnarray*}
Hence, since the last inequality trivially holds for all $z\in X\setminus F$, we infer that 	$z^{\ast }(t)\in \partial _{\ell
		(t)}(f_{t}+\delta _{F})(0)$ ae. 
		Moreover, since $A(P({\dom}I_f))\subseteq 
	{\dom}\hat{I}_{\hat{f}}$, by arguing as above we also deduce that $\lambda ^{\ast }\in N^{\epsilon _{2}}_{%
		{\dom}I_{f}\cap L}(0)$. 
		Consequently, using (\ref{eqhh}) it follows that for all $h\in X$
		\begin{eqnarray*}
		(P^*(x_{|_{F}}^{\ast }))(h)=x_{|_{F}}^{\ast }(P(h))&=&\int_{T}\langle \alpha ^{\ast }(t),P(h)\rangle d\mu
	(t)+\beta ^{\ast }(P(h)\mathds{1}_{T}),\\
	&=&\int_{T}\langle P^*\alpha ^{\ast }(t),h\rangle d\mu
	(t)+\beta ^{\ast }(P(h)\mathds{1}_{T}),\\
	&=&\int_{T}\langle z^{\ast }(t),h\rangle d\mu
	(t)+\lambda ^{\ast }(h);
	\end{eqnarray*}
	showing that $$P^*(x_{|_{F}}^{\ast })=\int_T z^{\ast }(t)d\mu(t)+\lambda^*
	\subset \int_{T}\partial _{\ell (t)}(f_{t}+\delta _{F})(0)d\mu (t)+N_{{\dom}I_{f}\cap L}^{\epsilon _{2}}(0).
	$$
But we have that $x^*-P^*(x_{|_{F}}^{\ast })\in F^{\perp }$, and so
\begin{eqnarray*}
x^{\ast }&\in &
	\int_{T}\partial _{\ell (t)}(f_{t}+\delta _{F})(0)d\mu (t)+N_{{\dom}I_{f}\cap L}^{\epsilon _{2}}(0)+F^{\perp }\\
	&=& \int_{T}\partial _{\ell (t)}(f_{t}+\delta _{F})(0)d\mu (t)+N_{{\dom}I_{f}\cap L}^{\epsilon _{2}}(0).
		\end{eqnarray*}
		This completes the proof of \eqref{formula2.1}.\qed
\end{proof}

\begin{remark}
	\label{remarkrest}As it can be easily seen from the proof of  Theorem \ref{teorema2}, instead of assuming\ that $f:T\times X\rightarrow \mathbb{R\cup
		\{+\infty \}}$ is a normal convex integrand, it is sufficient to suppose
	that for every finite-dimensional subspace $F$ of $X$, the function $%
	f_{|_{F}}:T\times F\rightarrow \mathbb{R\cup \{+\infty \}}$ is a convex
	normal integrand; of course, both assumptions coincide in the finite-dimensional
	setting, but they are not equivalent in general.
\end{remark}

\begin{remark}
	\label{remaknumerabilidad} It is worth mentioning that Theorem \ref{teorema2} above also
	holds if, instead of $\mathcal{F}(x)$, we take some subfamily of
	finite-dimensional subspaces, $\tilde{\mathcal{L}}:=\{L_n,\, n\ge1 \}\subseteq \mathcal{F}(x)$, such that $$%
	\bigcup_{n\in \mathbb{N}}{L_{n}}=X.$$ For example, if the space $X$
	is separable, or more generally, if $\epi I_{f}$ is separable, then we take 
a dense set $(x_{i},\alpha _{i})_{i\geq 1}$ in $\epi I_{f}$, and define $$L_{n}:=%
	\spn\{x,x_{i},\, i=1,\cdots, n\}.$$ Then it is easy to see that $\bigcap\limits_{n\in 
		\mathbb{N}}\partial _{\epsilon }(I_{f}+\delta _{L_{n}})(x)=\partial
	_{\varepsilon }I_{f}(x)$.
\end{remark}

	In  Theorem \ref{teorema2} one can weaken the convexity hypothesis\ by
	assuming that, for every finite-dimensional subspace $F\subset X,$ $%
	f_{|_{F}} $ is a normal integrand and 
	\begin{equation}\label{eqt}
		\cco_{F}I_{f}=I_{\cco_{F}f}.
	\end{equation}%
	\begin{corollary}
Under condition (\ref{eqt}), we have that 
	\begin{align*}
		\partial _{\varepsilon }I_{f}(x)& \subseteq \bigcap\limits_{L\in \mathcal{F}%
			(x)}\bigcup\limits_{\substack{ \epsilon =\epsilon _{1}+\epsilon _{2}  \\ %
				\epsilon _{1},\epsilon _{2}\geq 0  \\ \ell \in \mathcal{I}(\epsilon _{1})}}%
		\Bigg\{ \int_{T}\partial _{\ell (t)+m_{x,L}(t)}(f_{t}+\delta _{\aff%
			\{L\cap {\dom}I_{f}\}})(x)d\mu (t)\\&\hspace{4cm}+N_{{\dom}I_{f}\cap
			L}^{\epsilon _{2}}(x)\Bigg\} , \\
		\partial _{\varepsilon }I_{f}(x)& \subseteq \bigcap\limits_{L\in \mathcal{F}%
			(x)}\bigcup\limits_{\substack{  \\ \ell \in \mathcal{I}(\epsilon )}}\left\{
		\int_{T}\partial _{\ell (t)+m_{x,L}(t)}(f_{t}+\delta _{L\cap {		\dom}I_{f}})(x)d\mu (t)\right\} ,
	\end{align*}%
	where $$m_{x,L}(\cdot ):=f(\cdot ,x)-\cco_{L}f(\cdot ,x)$$ is
	the modulus of convexity over $F.$ In addition, if $\partial
	_{\varepsilon }I_{f}(x)\neq \emptyset ,$ then $$\int_{T}m_{x,L}(t)d\mu
	(t)\leq \epsilon, \text{ for all } L\in \mathcal{F}(x).$$ 
	\end{corollary}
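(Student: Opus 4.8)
The plan is to reduce the non-convex statement to the convex case of Theorem \ref{teorema2} by replacing, on each finite-dimensional subspace $L\in\mathcal{F}(x)$, the integrand $f$ with its fiber-wise closed convex hull $g:=\cco_L f$ (extended by $+\infty$ off $L$), and to pay for this replacement through the modulus $m_{x,L}$. Two elementary facts will drive the argument. The first is a \emph{conjugate-level} fact: since $g$ is a closed convex hull, its conjugate is unchanged, so $\partial_\varepsilon$-memberships transfer with an explicit shift of the tolerance by the integral gap. The second is a \emph{fiber-wise} fact, immediate from the definition of the $\varepsilon$-subdifferential: whenever $\phi\le h$ and $h(x)-\phi(x)=m$, one has $\partial_\alpha\phi(x)\subseteq\partial_{\alpha+m}h(x)$.

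First I would record the purely elementary reduction $\partial_\varepsilon I_f(x)=\bigcap_{L\in\mathcal{F}(x)}\partial_\varepsilon(I_f+\delta_{L\cap\dom I_f})(x)$, valid for an arbitrary (possibly non-convex) function: the inclusion ``$\subseteq$'' is clear, and for ``$\supseteq$'' one tests any $y\in X$ against $L=\spn\{x,y\}$ (here $I_f+\delta_{L\cap\dom I_f}=I_f+\delta_L$). Fixing $L$ and taking $x^*\in\partial_\varepsilon(I_f+\delta_{L\cap\dom I_f})(x)$, set $g:=\cco_L f$. By \eqref{eqt} applied with the subspace $L$, $I_g=I_{\cco_L f}=\cco_L I_f$, so $I_g$ is exactly the closed convex hull of $I_f+\delta_{L\cap\dom I_f}$; in particular $(I_g)^*=(I_f+\delta_{L\cap\dom I_f})^*$ and $I_g(x)=I_f(x)-M_{x,L}$, where $M_{x,L}:=\int_T m_{x,L}(t)\,d\mu(t)$. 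The conjugate-level fact then gives $x^*\in\partial_{\varepsilon-M_{x,L}}I_g(x)$, and the Fenchel--Young inequality for $I_g$ forces $\varepsilon-M_{x,L}\ge 0$, i.e. $\int_T m_{x,L}\,d\mu\le\varepsilon$; this already proves the last assertion of the corollary.

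Next I would apply Theorem \ref{teorema2} to the now \emph{convex} normal integrand $g$ and keep only the term $L'=L$ of the intersection in \eqref{formula2.1} (resp. \eqref{formula2.2}). This places $x^*$ in a union, over $\epsilon_1'+\epsilon_2'=\varepsilon-M_{x,L}$ and $\ell'\in\mathcal{I}(\epsilon_1')$, of the sets $\int_T\partial_{\ell'(t)}(g_t+\delta_\bullet)(x)\,d\mu(t)+N^{\epsilon_2'}_{L\cap\dom I_g}(x)$, where $\delta_\bullet$ is $\delta_{\aff\{L\cap\dom I_g\}}$ (resp. $\delta_{L\cap\dom I_g}$). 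Since $\cco$ preserves the affine hull of the domain, $\aff\{L\cap\dom I_g\}=\aff\{L\cap\dom I_f\}$, while $L\cap\dom I_f\subseteq L\cap\dom I_g$ yields $N^{\epsilon_2'}_{L\cap\dom I_g}(x)\subseteq N^{\epsilon_2'}_{L\cap\dom I_f}(x)$. The fiber-wise fact, applied with $\phi=g_t+\delta_\bullet$, $h=f_t+\delta_\bullet$ and $m=m_{x,L}(t)$, upgrades $\partial_{\ell'(t)}(g_t+\delta_\bullet)(x)$ to $\partial_{\ell'(t)+m_{x,L}(t)}(f_t+\delta_\bullet)(x)$ (in the second formula the indicators $\delta_{L\cap\dom I_g}$ and $\delta_{L\cap\dom I_f}$ differ, but one still has $g_t+\delta_{L\cap\dom I_g}\le f_t+\delta_{L\cap\dom I_f}$ with the same gap $m_{x,L}(t)$ at $x$). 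Finally I would re-index by $\epsilon_1:=\epsilon_1'+M_{x,L}$, $\epsilon_2:=\epsilon_2'$, $\ell:=\ell'$: then $\epsilon_1+\epsilon_2=\varepsilon$ and $\int_T\ell\,d\mu\le\epsilon_1'\le\epsilon_1$, so $\ell\in\mathcal{I}(\epsilon_1)$, which places $x^*$ in the displayed union for this $L$; intersecting over $L\in\mathcal{F}(x)$ gives both inclusions.

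The main obstacle I anticipate is the step justifying that $g=\cco_L f$ is genuinely a \emph{convex normal integrand}, so that Theorem \ref{teorema2} is applicable: one must verify the $\Sigma\otimes\mathcal{B}(L)$-measurability of $(t,z)\mapsto\cco_L f_t(z)$ and that $g_t\in\Gamma_0(L)$ for a.e. $t$, the latter being delicate since $\cco_L f_t$ could a priori take the value $-\infty$. The hypothesis \eqref{eqt} together with $\partial_\varepsilon I_f(x)\ne\emptyset$ (hence $I_g$ proper) is what keeps this under control; the remaining bookkeeping --- that $\dom I_g$ and $L\cap\dom I_f$ share the same affine hull and that the normal-set and integral terms match up --- is routine.
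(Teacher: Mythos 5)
Your argument is correct and follows essentially the same route the paper intends: convexify the integrand on each finite-dimensional $L$ via condition (\ref{eqt}), apply Theorem \ref{teorema2} to $\cco_L f$, and transfer back through the gap $m_{x,L}$, with the final assertion obtained from the Fenchel--Young inequality exactly as in the paper's (much terser) proof. The one point you flag but do not fully discharge --- that $\cco_L f$ is again a proper, measurable convex normal integrand on $T\times L$ --- is indeed the only technical debt, and it is settled by the standard finite-dimensional fact that the lsc convex hull of a normal integrand is a normal integrand, with properness a.e.\ following from your bound $\int_T m_{x,L}\,d\mu\le\varepsilon$.
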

	\begin{proof}
	The first statement being straightforward from Theorem \ref{teorema2}, we only prove the last one. Take $L\in 
	\mathcal{F}(x)$. Then, for every $t\in T$, we have that $$f(t,x)\geq \cco%
	(f(t,\cdot )_{|_{L}})(x)\geq \cco f(t,x).$$ Also, the
	non-emptiness of $\partial _{\varepsilon }I_{f}(x)\ $ensures that $%
	I_{f}(x)\leq \cco I_{f}(x)+\epsilon ,$ so that the hypothesis $I_{\cco f}(x)=%
	\cco I_{f}(x)$ leads to $I_{f}(x)\leq I_{\cco f}(x)+\epsilon $. This\
	implies that\ $$\int_{T}\bigg( f(t,x)-\cco(f(t,\cdot )_{|_{L}})(x)\bigg)d\mu
	(t)\leq \epsilon .$$ In particular,\ if $f$ is a convex normal integrand, or
	if $\partial I_{f}(x)\neq \emptyset ,$\ then we get $m_{x,L}(\cdot )=0.$\qed
	\end{proof}

The next example, given for smooth functions $f_t$ defined on finite-dimensional spaces, justifies the use of the indicator function inside the integral symbol in the formulae of Theorem  \ref{teorema2}.

\begin{example}
	\label{ejemploimportante}Consider the function $f(x):=\frac{b}{a}x+b+\delta
	_{\lbrack -\eta ,\eta ]}(x)$, $a,b,\eta >0$. Then we have 
	\begin{equation*}
		\partial _{\varepsilon }f(0)=\left[ -\frac{\epsilon }{\eta }+\frac{b}{a},%
		\frac{\epsilon }{\eta }+\frac{b}{a}\right] .
	\end{equation*}%
	We consider the Lebesgue measure on $]0,1]$ and the convex normal integrand $f:\,]0,1]\times \mathbb{R}\rightarrow
	\lbrack 0,+\infty ]$ given by 
	$$f(t,x):=\frac{b(t)}{a(t)}x+b(t)+\delta
	_{\lbrack -\eta (t),\eta (t)]}(x),$$
	 where $a(t)=\eta (t)=t$ and $b(t)=\frac{1}{\sqrt{t}}+1.$ Hence, 
	\begin{equation*}
		I_{f}(x)=\left\{ 
		\begin{array}{crr}
			\int_{0}^{1}\left( 1+\frac{1}{\sqrt{t}}\right) dt & \text{ if }~ & x=0 \\ 
			+\infty & \text{ if }~ & x\neq 0,%
		\end{array}%
		\right.
	\end{equation*}%
	and we obtain $\partial I_{f}(0)=\mathbb{R}$, while
	\begin{equation*}
		\partial _{\varepsilon }f_t(0)=\left[ -\frac{\epsilon }{t }+\frac{\frac{1}{\sqrt{t}}+1}{t},%
		\frac{\epsilon }{t }+\frac{\frac{1}{\sqrt{t}}+1}{t}\right] =\left[ \frac{1-\epsilon}{t }+\frac{1}{t^{3/2}},
		\frac{1+\epsilon }{t }+ \frac{1}{t^{3/2}}\right].
	\end{equation*}%
	Consequently, the set\ $\int_0^1 \partial
	_{\varepsilon }f_t(0) dt$ is empty for every $0<\epsilon <1$.
\end{example}

\begin{remark}
	\label{remcon}Observe that the formulae of  Theorem \ref{teorema2} can be
	simplified if some qualification conditions (QC, for short) are in force.
	For instance, each one of conditions (\ref{QC0})-(\ref{QC3}) below (see \cite[Theorem 2.8.3]{MR1921556}) ensures the validity of the exact sum rule 
	$$\partial _{\epsilon
	}(f_{t}+\delta _{\aff ( L\cap \dom
		I_f ) })(x)=\partial _{\varepsilon }f_{t}(x)+\partial \delta _{%
		\aff{( L\cap \dom
			I_f )	}}(x), \,\,t\in T,\,\,L\in \mathcal{F}(x),$$ which in turn gives rise to characterizations of $\partial
	_{\varepsilon }I_{f}(x)$ by means only of the $\varepsilon $%
	-subdifferentials of the $f_{t}$'s:
	
	\begin{enumerate}[label={QC(\roman*)},ref={QC(\roman*)}]
		
		\item\label{QC0} $X=\mathbb{R}^{n}$ and $\ri(\dom f_{t})\cap \aff ( L\cap \dom
		I_f) \neq \emptyset $.
		
		\item\label{QC1} $X$ Banach and $\mathbb{R}%
		_{+}(\dom f_{t}-\aff(\dom I_{f}\cap L))$ is a closed
		subspace.
		
		\item\label{QC2} $f_{t}$ is
		continuous at some point of $\dom I_{f}$.
		
		\item\label{QC3} For every $B\in \mathcal{N}_{0}$, there
		exist $r >0$ and $V\in \mathcal{N}_{0}$ such that $$V\cap \spn\bigg\{%
		\dom f_{t}-\aff(\dom I_{f}\cap L)\bigg\}\subseteq
		\{f_{t}\leq r\}\cap B-\aff(\dom I_{f}\cap L).$$
	\end{enumerate}
	
	All of the above conditions imply the following property (see, e.g., \cite{MR1721727,MR2197287,MR2346533,MR3561780,MR3522006}): 
	\begin{enumerate}[label={QC(\roman*)},ref={QC(\roman*)}]
		\setcounter{enumi}{4}
		\item \label{QC4.0} For every $x^{\ast }\in X^{\ast }$,
		\begin{align}\label{QC4}
			\big(f_t+\delta_{\aff(\dom I_{f}\cap L)}\big)^{\ast }(x^{\ast })&=\min %
			\big\{f_t^{\ast }(y^{\ast }):x^{\ast }-y^{\ast }\in (\aff(\dom %
			I_{f}\cap L))^{\circ }\big\}
		\end{align}
	\end{enumerate}
\end{remark}

\begin{corollary}
	\label{corollaryexactsum}In the setting of  Theorem \ref{teorema2}, suppose that 
	one of conditions \ref{QC0} to \ref{QC4.0} holds in a measurable
	set $T_{0}\subset T.$ Then for all $x\in X$\ 
	\begin{equation}\label{eq1:corollaryexactsum}
		\begin{split}
			\partial I_{f}(x)=& \bigcap\limits_{L\in \mathcal{F}(x)}\biggl\{%
			\int_{T_{0}}\left( \partial f_{t}(x)+(\aff{(\dom I_f \cap L-x )}%
			)^{\bot }\right) d\mu (t)\\
			&\hspace{2cm}+\int_{T_{0}^{c}}\partial (f_{t}+\delta _{%
				\aff\{L\cap \dom I_{f}\}})(x)d\mu (t) 
			+N_{L\cap \dom I_{f}}(x)\biggr\},
		\end{split}%
	\end{equation}%
	where $T_{0}^{c}$ is the complement of $T_{0}$. In particular, if $T$ is finite, then 
	\begin{equation}\label{eq2:corollaryexactsum}
		\partial \biggl(\sum\limits_{t\in T}f_{t}\biggr)(x)=\bigcap\limits_{\epsilon
			>0}\cl^{w^*} \biggl\{\sum_{t\in T_{0}}\partial f_{t}(x)+\sum_{t\in
			T_{0}^{c}}\partial _{\epsilon }f_{t}(x)\biggr\}.	\end{equation}
\end{corollary}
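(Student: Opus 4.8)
The plan is to read both identities off Theorem \ref{teorema2} at $\varepsilon=0$, and then to replace the integrands $\partial(f_t+\delta_{\,\cdot\,})$ by exact subdifferentials using the sum rule supplied by the qualification conditions.

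For \eqref{eq1:corollaryexactsum} I would evaluate \eqref{formula2.1} at $\varepsilon=0$. Since $\epsilon_1,\epsilon_2\ge0$ with $\epsilon_1+\epsilon_2=0$ force $\epsilon_1=\epsilon_2=0$, and $\ell\in\mathcal{I}(0)$ forces $\ell=0$ a.e., the inner union collapses to a single term and $N^{\epsilon_2}_{L\cap\dom I_f}(x)$ becomes the plain normal cone, so that $\partial I_f(x)=\bigcap_{L\in\mathcal{F}(x)}\{\int_T\partial(f_t+\delta_{\aff\{L\cap\dom I_f\}})(x)\,d\mu(t)+N_{L\cap\dom I_f}(x)\}$. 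On $T_0$, Remark \ref{remcon} gives the exact rule $\partial(f_t+\delta_{\aff\{L\cap\dom I_f\}})(x)=\partial f_t(x)+\partial\delta_{\aff\{L\cap\dom I_f\}}(x)$, and because $x\in L\cap\dom I_f$ the second term is $N_{\aff\{L\cap\dom I_f\}}(x)=(\aff(\dom I_f\cap L-x))^{\perp}$. Splitting the Aumann integral over the disjoint pieces as $\int_T=\int_{T_0}+\int_{T_0^c}$ (the integral is additive with respect to the domain, a selection on $T$ being exactly a pair of selections on $T_0$ and $T_0^c$) and inserting this pointwise a.e. replacement on $T_0$ yields precisely \eqref{eq1:corollaryexactsum}.

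For \eqref{eq2:corollaryexactsum}, with $T$ finite I would argue directly on $\partial(\sum_{t\in T}f_t)(x)$. First I peel off the $T_0$-block: for each $t_0\in T_0$ the chosen condition makes the sum rule exact and closure-free (under \ref{QC2}, for instance, $f_{t_0}$ is continuous at a point of $\dom I_f\subseteq\dom(\sum_{t\ne t_0}f_t)$), so iterating gives $\partial(\sum_{t\in T}f_t)(x)=\sum_{t\in T_0}\partial f_t(x)+\partial(\sum_{t\in T_0^c}f_t)(x)$. To the residual sum, for which no qualification is assumed, I apply the finite-sum version of the Hiriart-Urruty and Phelps formula recalled in the Introduction, $\partial(\sum_{t\in T_0^c}f_t)(x)=\bigcap_{\epsilon>0}\cl^{w^*}(\sum_{t\in T_0^c}\partial_\epsilon f_t(x))$. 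The two inclusions then close the proof: for ``$\subseteq$'' I use $c^*+\cl D\subseteq\cl(c^*+D)$ with $c^*\in\sum_{t\in T_0}\partial f_t(x)$ to push each element into $\bigcap_{\epsilon>0}\cl^{w^*}(\sum_{t\in T_0}\partial f_t(x)+\sum_{t\in T_0^c}\partial_\epsilon f_t(x))$; for ``$\supseteq$'' I use $\partial f_t\subseteq\partial_\epsilon f_t$ together with the full-index formula $\partial(\sum_{t\in T}f_t)(x)=\bigcap_{\epsilon>0}\cl^{w^*}(\sum_{t\in T}\partial_\epsilon f_t(x))$.

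The $\varepsilon=0$ collapse and the domain-additivity of the Aumann integral are routine. The delicate step, and where I expect the real work, is verifying that each of \ref{QC0}--\ref{QC4.0}, and not only the transparent continuity condition \ref{QC2}, licenses the closure-free peeling of a single $f_{t_0}$ at $x$: the conditions in Remark \ref{remcon} are phrased relative to $\aff\{L\cap\dom I_f\}$, so I would have to re-read them as qualification conditions between $f_{t_0}$ and the partial sum $\sum_{t\ne t_0}f_t$ and check that this reformulation is faithful. I expect this reconciliation between the affine-hull formulation and the pairwise sum rule to be the main obstacle, since it is exactly the step in which the hypotheses are used.
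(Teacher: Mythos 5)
Your treatment of \eqref{eq1:corollaryexactsum} matches the paper's: the paper also obtains it directly from Theorem \ref{teorema2} at $\varepsilon=0$ together with the exact sum rule of Remark \ref{remcon} on $T_0$, and the collapse $\epsilon_1=\epsilon_2=0$, $\ell=0$ a.e.\ plus the domain-additivity of the Aumann integral are indeed routine. The problem is \eqref{eq2:corollaryexactsum}, where your route diverges and the obstacle you yourself flag at the end is a genuine gap, not a detail to be checked. Your peeling step $\partial(\sum_{t\in T}f_t)(x)=\sum_{t\in T_0}\partial f_t(x)+\partial(\sum_{t\in T_0^c}f_t)(x)$ requires a qualification condition \emph{between $f_{t_0}$ and the partial sum} $\sum_{t\neq t_0}f_t$, whereas \ref{QC0}--\ref{QC4.0} are qualifications between $f_{t_0}$ and the indicator of $\aff(L\cap\dom I_f)$; these are not interchangeable. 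For instance, in $\mathbb{R}^2$ with $\dom f_1=[0,2]\times\{0\}$ and $\dom f_2=[0,1]^2$, condition \ref{QC0} holds for $t=1$ (since $\ri(\dom f_1)$ meets $\aff(\dom I_f)=\mathbb{R}\times\{0\}$), yet $\ri(\dom f_1)\cap\ri(\dom f_2)=\emptyset$, so the classical hypothesis for the exact pairwise sum rule fails. No argument is offered to bridge this, and the corollary is stated for all five conditions, so the proof is incomplete as proposed.

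The paper avoids the peeling entirely. It starts from \eqref{eq1:corollaryexactsum}, applies the Hiriart-Urruty--Phelps approximate sum rule \cite[Theorem 3.1]{MR1330645} to each term $\partial(f_t+\delta_{\aff\{L\cap\dom I_f\}})(x)$ with $t\in T_0^c$ (this needs no qualification and produces $\partial_\epsilon f_t(x)+(\aff(\dom I_f\cap L-x))^{\perp}+V$ for a neighborhood $V$), absorbs all the annihilator terms into $N_{L\cap\dom I_f}(x)$, and then --- this is the step your proposal has no substitute for --- eliminates the normal cone by the recession-cone identity
\begin{equation*}
N_{L\cap\dom I_{f}}(x)=\Bigl[\cl\Bigl(\sum_{t\in T_{0}}\partial f_{t}(x)+\sum_{t\in T_{0}^{c}}\partial_{\epsilon}f_{t}(x)+L^{\perp}\Bigr)\Bigr]_{\infty}
\end{equation*}
from \cite[Lemma 11]{MR2448918}, valid because $\partial I_f(x)\neq\emptyset$ forces the sets on the right to be nonempty. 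Adding a closed convex set's recession cone to it does not enlarge it, so the normal cone disappears and intersecting over $V$ and $\epsilon$ gives \eqref{eq2:corollaryexactsum}. In short: the formula is normal-cone-free not because the $T_0$-block can be split off exactly, but because the normal cone is a recession direction of the closed fuzzy sum. To repair your proof you would either have to justify the peeling under each of \ref{QC0}--\ref{QC4.0} (which, as the example shows, is not possible in general) or replace it by an absorption argument of this recession-cone type.
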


\begin{proof}
	Equation \eqref{eq1:corollaryexactsum} is direct from  Theorem \ref{teorema2} and  Remark \ref{remcon}, and so, we only need to prove  \eqref{eq2:corollaryexactsum}. Fix $x\in X,$ $V\in \mathcal{N}_{0}$ and choose $%
	L\in \mathcal{F}(x)$ such that $L^{\perp }\subseteq V$. We may assume that $%
	\partial I_{f}(x)\neq \emptyset .$ By \eqref{eq1:corollaryexactsum}, and
	taking into account \cite[Theorem 3.1]{MR1330645}, we have, for every $%
	\epsilon >0,$ 
	\begin{align*}
		\partial \biggl(\sum\limits_{i\in T}f_{i}\biggr)(x) \subseteq& \sum_{i\in
			T_{0}}\partial f_{i}(x)+(\aff{(\dom I_f \cap L-x )})^{\bot }\\&+\sum_{i\in
			T_{0}^{c}}\partial (f_{t}+\delta _{\aff\{L\cap \dom %
			I_{f}\}})(x)+N_{L\cap \dom I_{f}}(x) \\
		\subseteq& \sum_{i\in T_{0}}\partial f_{i}(x)+(\aff{(\dom I_f \cap L-x )}%
		)^{\bot }+\sum_{i\in T_{0}^{c}}\partial _{\epsilon }f_{t}(x)\\& +(\aff{(\dom I_f
			\cap L-x )})^{\bot }+N_{L\cap \dom I_{f}}(x)+V;
	\end{align*}%
	hence, $\partial f_{i}(x)\neq \emptyset $ for all $i\in T_{0}.$ But we have  $$(%
	\aff{(\dom I_f \cap L-x )})^{\bot }+(\aff{(\dom I_f \cap L-x )})^{\bot
	}+N_{L\cap \dom I_{f}}(x)\subseteq N_{L\cap \dom I_{f}}(x),$$
	and so  
	\begin{equation*}
		\partial \biggl(\sum\limits_{i\in T}f_{i}\biggr)(x)\subseteq \sum_{i\in
			T_{0}}\partial f_{i}(x)+\sum_{i\in T_{0}^{c}}\partial _{\epsilon
		}f_{t}(x)+N_{L\cap \dom I_{f}}(x)+V.
	\end{equation*}%
	Moreover, since (see \cite[Lemma 11]{MR2448918})
	\begin{equation*}
		N_{L\cap \dom I_{f}}(x)=\biggl[ \cl \biggl(\sum_{i\in
			T_{0}}\partial f_{i}(x)+\sum_{i\in T_{0}^{c}}\partial _{\epsilon
		}f_{t}(x)+L^{\perp }\biggr)\bigg]_{\infty },
	\end{equation*}%
	it follows that 
	\begin{equation*}
		\partial \biggl(\sum\limits_{i\in T}f_{i}\biggr)(x)\subseteq \sum_{i\in
			T_{0}}\partial f_{i}(x)+\sum_{i\in T_{0}^{c}}\partial _{\epsilon
		}f_{t}(x)+V+V,
	\end{equation*}%
	which in turn implies that 
	\begin{align*}
		\partial \biggl(\sum\limits_{i\in T}f_{i}\biggr)(x)& \subseteq
		\bigcap\limits_{\epsilon >0,\,i\in T_{0}^{c}}\bigcap\limits_{V\in \mathcal{N}%
			_{0}}\biggl[\sum_{i\in T_{0}}\partial f_{i}(x)+\sum_{i\in T_{0}^{c}}\partial
		_{\epsilon }f_{t}(x)+V+V\biggr] \\
		& =\bigcap\limits_{\epsilon >0,\,i\in T_{0}^{c}} \cl \biggl\{\sum_{i\in
			T_{0}}\partial f_{i}(x)+\sum_{i\in T_{0}^{c}}\partial _{\epsilon }f_{t}(x)%
		\biggr\}.
	\end{align*}%
	This yields the direct inclusion ``$\subset $" and then completes the proof,
	since\ the opposite inclusion is easily  checked.\qed
\end{proof}

\begin{corollary}\label{corollarylfinitedim}
	Let $f:T \times \mathbb{R}^n \to \Rex$ be a convex normal integrand. Assume that $\ri(\dom f_{t})\cap \aff ( \dom
	I_f) \neq \emptyset $ for almost all $t\in T$. Then for every $x\in \R^n$
	
	\begin{align}\label{eqcorollarylfinitedim}
		\partial I_{f}(x)= \int_{T}\left( \partial f_{t}(x)+ N_{{\dom}I_{f}}(x)\right) d\mu (t) .
	\end{align}
\end{corollary}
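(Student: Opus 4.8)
The plan is to establish the two inclusions separately, invoking Theorem~\ref{teorema2} only for ``$\subseteq$'' and arguing directly for ``$\supseteq$''. Write $N:=N_{\dom I_f}(x)$. If $x\notin\dom I_f$, then $N=\partial\delta_{\dom I_f}(x)=\emptyset$, so the right-hand side of \eqref{eqcorollarylfinitedim} has no admissible integrable selection and is empty, while $\partial I_f(x)=\emptyset$ as well; hence I may assume $x\in\dom I_f$. Set $S:=(\aff(\dom I_f)-x)^{\perp}$; since $x\in\dom I_f\subseteq\aff(\dom I_f)$, the set $\aff(\dom I_f)-x$ is a linear subspace, $S=\partial\delta_{\aff(\dom I_f)}(x)=N_{\aff(\dom I_f)}(x)$ is its orthogonal complement, and $S\subseteq N$ because every $s\in S$ satisfies $\langle s,y-x\rangle=0\le 0$ for all $y\in\dom I_f$.

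For ``$\supseteq$'', let $x^{\ast}=\int_T m(t)\,d\mu(t)$ with $m$ integrable and $m(t)\in\partial f_t(x)+N$ for a.e.\ $t$. Fix $y\in\dom I_f$, so that $y\in\dom f_t$ for a.e.\ $t$. For a.e.\ $t$ I decompose (pointwise, with no measurability needed) $m(t)=p(t)+\nu(t)$ with $p(t)\in\partial f_t(x)$ and $\nu(t)\in N$; the subgradient inequality for $p(t)$ together with $\langle\nu(t),y-x\rangle\le 0$ yields $\langle m(t),y-x\rangle\le f_t(y)-f_t(x)$. Integrating this scalar inequality (all quantities being integrable, as $x,y\in\dom I_f$) gives $\langle x^{\ast},y-x\rangle\le I_f(y)-I_f(x)$, which is trivial when $y\notin\dom I_f$; hence $x^{\ast}\in\partial I_f(x)$.

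For ``$\subseteq$'', apply Theorem~\ref{teorema2} with $\varepsilon=0$. Since $\mathcal{I}(0)=\{0\}$, formula \eqref{formula2.1} reads $\partial I_f(x)=\bigcap_{L\in\mathcal{F}(x)}\big\{\int_T\partial(f_t+\delta_{\aff(L\cap\dom I_f)})(x)\,d\mu(t)+N_{L\cap\dom I_f}(x)\big\}$. As $X=\mathbb{R}^n\in\mathcal{F}(x)$, the intersection is contained in its $L=\mathbb{R}^n$ term, for which $\aff(L\cap\dom I_f)=\aff(\dom I_f)$ and $N_{L\cap\dom I_f}(x)=N$. The standing hypothesis is precisely \ref{QC0} for $L=\mathbb{R}^n$, holding a.e., so Remark~\ref{remcon} supplies the exact sum rule $\partial(f_t+\delta_{\aff(\dom I_f)})(x)=\partial f_t(x)+S$ for a.e.\ $t$, whence $\partial I_f(x)\subseteq\int_T(\partial f_t(x)+S)\,d\mu(t)+N$. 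It remains to absorb the two extra terms into the integral: given $\int_T m\,d\mu(t)+\nu$ with $m(t)\in\partial f_t(x)+S\subseteq\partial f_t(x)+N$ a.e.\ and $\nu\in N$, choose $A\in\Sigma$ with $0<\mu(A)<\infty$ (possible by $\sigma$-finiteness) and replace $m$ by $m+\tfrac{1}{\mu(A)}\nu\mathds{1}_{A}$; because $N$ is a convex cone (so $N+N=N$ and $\tfrac{1}{\mu(A)}\nu\in N$), this is still an a.e.\ selection of $\partial f_t(x)+N$ with the same integral. Thus $\int_T(\partial f_t(x)+S)\,d\mu(t)+N\subseteq\int_T(\partial f_t(x)+N)\,d\mu(t)$, completing ``$\subseteq$''.

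The main obstacle, and the reason one should not simply quote Corollary~\ref{corollaryexactsum}, is that the qualification is assumed here only against $\aff(\dom I_f)$, i.e.\ it is \ref{QC0} for $L=\mathbb{R}^n$ alone and may fail for smaller $L\in\mathcal{F}(x)$, whose $\aff(L\cap\dom I_f)$ can miss $\ri(\dom f_t)$. The resolution is that this single term of the intersection already forces ``$\subseteq$'', while ``$\supseteq$'' is unconditional; the only remaining care is the bookkeeping that moves the fixed subspace $S$ and the fixed cone $N$ in and out of the Aumann integral, which rests solely on $S\subseteq N$, $N+N=N$, and $\sigma$-finiteness.
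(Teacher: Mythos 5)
Your proof is correct and follows essentially the same route as the paper's: specialize the qualified subdifferential formula to the single term $L=\mathbb{R}^n$, apply the exact sum rule under \ref{QC0}, and absorb $(\aff(\dom I_f)-x)^{\perp}$ and $N_{\dom I_f}(x)$ into the Aumann integral using $N+N=N$ and $\sigma$-finiteness. The paper simply invokes Corollary \ref{corollaryexactsum}; your observation that its hypothesis, read literally, asks for \ref{QC0} relative to every $L\in\mathcal{F}(x)$ while only the $L=\mathbb{R}^n$ term is needed (so that one should instead go back to Theorem \ref{teorema2} together with Remark \ref{remcon}) is a legitimate refinement rather than a different method, and your explicit check of the unconditional inclusion ``$\supseteq$'' merely fills in what the paper leaves implicit.
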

\begin{proof}
	Fix $x\in \R^n$. Using Corollary \ref{corollaryexactsum}, we have that 
	\begin{align*}
		\partial I_{f}(x)&= \int_{T}\left( \partial f_{t}(x)+(\aff{(\dom I_f -x )}%
		)^{\bot }\right) d\mu (t)  + N_{{\dom}I_{f}}(x)\\
		& =\int_{T}\left( \partial f_{t}(x)+(\aff{(\dom I_f -x )}%
		)^{\bot }+ N_{{\dom}I_{f}}(x)\right) d\mu (t) \\
		&=\int_{T}\left( \partial f_{t}(x)+ N_{{\dom}I_{f}}(x)\right) d\mu (t) .
	\end{align*}\qed
\end{proof}
\begin{remark} \label{remark:corollarylfinitedim}
	Example \ref{ejemploimportante} shows that (\ref{eqcorollarylfinitedim}) cannot be simplified to \begin{align*}
		\partial I_{f}(x)= \int_{T} \partial f_{t}(x) d\mu (t) +N_{{\dom}I_{f}}(x).
	\end{align*}\end{remark} 
\begin{example}
	The main feature of the finite sum given in  \eqref{eq2:corollaryexactsum} is that the characterization of $\partial I_{f}(x)$
	does not involve the normal cone $N_{ \dom I_{f}\cap L}(x).$ This
	fact is specific\ to this finite case and cannot be true in general, even
	for\ smooth data functions $f_{t}$ with\ $\int_T \partial f_t(x)d\mu (t)\neq
	\emptyset $. For example, consider the  Lebesgue measure  on $]0,1]$ and the
	integrand $f:\,]0,1]\times \mathbb{R}\rightarrow \mathbb{R}$ given by $$%
	f(t,x)=x^{2}/t.$$ Then we obtain  $I_{f}=\delta _{\{0\}}$ and, so, $%
	\partial f_t(0)=\{0\}$, while\ $\partial I_{f}(0)=\mathbb{R}$. The same
	example can be adapted to construct a counterexample  for a countable measure over the measurable space 
	$(\mathbb{N},\mathcal{P}(\mathbb{N}))$.
\end{example}

Next, we give  another formula for the subdifferential of finite sums of
convex functions, where a qualification condition involving the relative
interiors is satisfied by only a part of the family $\{f_{t},$ $t\in T\}.$
We need the following technical Lemma, which is an adaptation of classical
techniques in the finite-dimension setting (see, e.g., \cite[Corollary 2.3.5]{MR1921556}).

\begin{lemma}
	\label{lemmacontinuity}Let $g\in \Gamma _{0}(X)$ and let $L\subset X$ be a
	finite-dimensional affine subspace. If $g$ is continuous relative to $\aff(%
	\dom g)$ at some point in $ \dom g\cap L$, then for every $%
	x^{\ast }\in X^{\ast }$ 
	\begin{equation*}
		\big(g+\delta _{L}\big)^{\ast }(x^{\ast })=\min \big\{g^{\ast }(y^{\ast
		})+\delta _{L}^{\ast }(x^{\ast }-y^{\ast }):y^{\ast }\in X^{\ast }\big\}.
	\end{equation*}
\end{lemma}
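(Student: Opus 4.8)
The plan is to establish the inequality ``$\geq$'' pointwise together with attainment of the infimum, the reverse inequality ``$\leq$'' being the elementary Fenchel--Young estimate which gives $(g+\delta_L)^*(x^*)\le g^*(y^*)+\delta_L^*(x^*-y^*)$ for every $y^*\in X^*$. Fix $x^*\in X^*$ and, translating by a point $\bar{x}\in\dom g\cap L$ at which $g$ is continuous relative to $\aff(\dom g)$ (using the standard covariance of conjugates under translation), I may assume $\bar{x}=0$, so that both $L$ and $M:=\aff(\dom g)$ become linear subspaces, with $M$ closed and $g_{|M}$ continuous at $0$. Since now $\delta_L^*=\delta_{L^{\perp}}$, the statement reduces to producing $y^*\in x^*+L^{\perp}$ with $g^*(y^*)\le\alpha$, where $\alpha:=(g+\delta_L)^*(x^*)=\sup_{x\in L}\{\langle x^*,x\rangle-g(x)\}$. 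If $\alpha=+\infty$ the formula holds trivially, since every admissible $y^*$ already satisfies $g^*(y^*)\ge\alpha$, so I assume $\alpha\in\R$.

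I would then introduce $\psi:=g-\langle x^*,\cdot\rangle+\alpha$. This $\psi$ is convex, $\dom\psi=\dom g\subseteq M$, it remains continuous relative to $M$ at $0$, and by the very definition of $\alpha$ one has $\psi\ge0$ on $L$ with $\inf_L\psi=0$. Writing $y^*=x^*+w^*$, the desired estimate $g^*(y^*)\le\alpha$ is equivalent to $\langle w^*,x\rangle\le\psi(x)$ for all $x\in X$; thus everything reduces to finding $w^*\in L^{\perp}$ that minorizes $\psi$. Because $\psi\equiv+\infty$ off $M$, it suffices to secure $w^*\le\psi$ on $M$ together with $w^*_{|L}=0$.

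The construction proceeds in two stages, and this is exactly where the relative continuity and the finite-dimensionality of $L$ are genuinely used. I would split $L=L_0\oplus L_1$ with $L_0:=L\cap M$ and $L_1$ a finite-dimensional algebraic complement satisfying $L_1\cap M=\{0\}$. On the subspace $M$, the zero functional on $L_0$ is dominated by $\psi_{|M}$ (as $\psi\ge0$ on $L\supseteq L_0$), and $\psi_{|M}$ is continuous at the point $0\in L_0$; hence the Hahn--Banach dominated-extension (sandwich) theorem yields $\mu^*\in M^*$ with $\mu^*_{|L_0}=0$ and $\mu^*\le\psi_{|M}$. Since $M$ is closed and $L_1$ is finite-dimensional, $M\oplus L_1$ is a topological direct sum, so declaring $w^*:=\mu^*$ on $M$ and $w^*:=0$ on $L_1$ defines a continuous functional on $M\oplus L_1$, which a further Hahn--Banach extension carries to some $w^*\in X^*$. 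By construction $w^*$ vanishes on both $L_0$ and $L_1$, hence $w^*\in L^{\perp}$, while $w^*\le\psi$ on $M$ and therefore on all of $X$. The main obstacle is precisely this passage: the hypothesis only furnishes an interior point inside $M$, not in $X$, so one cannot dominate $\psi$ globally in one stroke; first solving the sandwich problem inside $M$ and then extending transversally across the finitely many directions of $L_1$ is the device that converts the relative-interior qualification into a usable one.

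Finally, setting $y^*:=x^*+w^*$ gives $y^*\in x^*+L^{\perp}$ and, for every $x\in X$, $\langle y^*,x\rangle-g(x)=\langle w^*,x\rangle-\psi(x)+\alpha\le\alpha$, whence $g^*(y^*)\le\alpha$; the Fenchel--Young inequality then forces $g^*(y^*)=\alpha$. Consequently the infimum defining the right-hand side is attained at this $y^*$ and equals $(g+\delta_L)^*(x^*)$, which is the claimed identity.
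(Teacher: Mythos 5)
The paper never actually proves this lemma: it is stated as ``an adaptation of classical techniques in the finite-dimensional setting'' with a pointer to \cite[Corollary 2.3.5]{MR1921556}, and no proof appears in the text, so there is nothing of the authors' to compare you against line by line. Your argument is correct and is essentially the expected one: reduce via Fenchel--Young to producing $w^{\ast}\in L^{\perp}$ minorizing $\psi=g-\langle x^{\ast},\cdot\rangle+\alpha$, solve the sandwich problem inside $M=\aff(\dom g)$ where the continuity hypothesis actually lives, and then extend transversally across the finitely many directions of $L_{1}$. Two assertions deserve an explicit line of justification. First, you take for granted that $M$ is closed. This is not among the hypotheses, but it does follow from them: if $g\leq c$ on $U\cap M$ for some $X$-open neighbourhood $U$ of the continuity point, then $U\cap\cl M\subseteq\cl (U\cap M)\subseteq[g\leq c]\subseteq\dom g\subseteq M$ by lower semicontinuity of $g$ on $X$, and a linear subspace containing a relative neighbourhood of the origin of $\cl M$ must equal $\cl M$. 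The closedness is genuinely needed where you invoke it: it is what makes $M\oplus L_{1}$ a topological direct sum (pass to the Hausdorff quotient $X/M$ and use that $L_1$ is finite-dimensional with $L_1\cap M=\{0\}$), hence what makes your glued functional continuous on $M\oplus L_{1}$ and Hahn--Banach extendable; if $M$ were merely dense the gluing would fail, so this point cannot be waved away. Second, the sandwich theorem on $M$ delivers an affine minorant $\mu^{\ast}+c$ of $\psi_{|M}$ that is nonnegative on $L_{0}$; since $L_{0}$ is a linear subspace this forces $\mu^{\ast}{}_{|L_{0}}=0$ and $c\geq0$, whence $\mu^{\ast}\leq\psi$ on $M$ --- routine, but it should be said, as should the remark that $\alpha>-\infty$ because $\dom g\cap L\neq\emptyset$. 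With these points supplied the proof is complete and the minimum is attained at $y^{\ast}=x^{\ast}+w^{\ast}$, as you claim.
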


\begin{corollary}
	\label{qualfin}Assume that $T$ is finite and let $\{f_{t}\}_{t\in
		T}\subseteq \Gamma _{0}(X)$ and $T_{0}\subset T$ be given. Then the
	following statements hold true\emph{:}
	
	\emph{(i) }If $\bigcap\limits_{t\in T_{0}}\ri_{\aff( \dom f_{t})}(%
	\dom f_{t})\cap \bigcap\limits_{t\in T_{0}^{c}} \dom %
	f_{t}\neq \emptyset $ and each $f_{t}$ with  $t\in T_{0}$ is continuous on $\ri_{%
		\aff( \dom f_{t})}( \dom f_{t})$, then for every $x\in X$ 
	\begin{equation*}
		\partial \biggl(\sum\limits_{t\in T}f_{t}\biggr)(x)=\bigcap\limits_{\epsilon
			>0} \cl^{w^*} \biggl\{\sum\limits_{t\in T_{0}}\partial
		f_{t}(x)+\sum\limits_{t\in T_{0}^{c}}\partial _{\epsilon }f_{t}(x)\biggr\}.
	\end{equation*}
	
	\emph{(ii) }If $\bigcap\limits_{t\in T_{0}}\inte (%
	\dom f_{t})\cap \bigcap\limits_{t\in T_{0}^{c}} \dom %
	f_{t}\neq \emptyset $ and each $f_{t}$ with $t\in T_{0}$ is continuous on $\inte%
	( \dom f_{t})$, then\ for every $x\in X$ 
	\begin{equation*}
		\partial \biggl(\sum\limits_{t\in T}f_{t}\biggr)(x)=\sum\limits_{t\in T_{0}}\partial
		f_{t}(x)+\bigcap\limits_{\epsilon
			>0} \cl^{w^*} \biggl\{\sum\limits_{t\in T_{0}^{c}}\partial _{\epsilon }f_{t}(x)\biggr\}.
	\end{equation*}
\end{corollary}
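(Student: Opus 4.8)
The plan is to derive both statements from the finite-sum formula \eqref{eq2:corollaryexactsum} of Corollary \ref{corollaryexactsum}, combined, in part (ii), with the classical exact sum rule under continuity. Throughout, fix $x\in X$. For part (i), the goal is to verify that the stated hypotheses force the qualification condition \ref{QC0} to hold on $T_0$ in the finite-dimensional reduction underlying Corollary \ref{corollaryexactsum}; once this is done, \eqref{eq2:corollaryexactsum} applies verbatim and gives exactly the asserted formula. First I would pick $\bar x$ in the nonempty set $\bigcap_{t\in T_0}\ri_{\aff(\dom f_t)}(\dom f_t)\cap\bigcap_{t\in T_0^c}\dom f_t$, and note that $\bar x\in\dom I_f$. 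Since $L\mapsto\partial(I_f+\delta_L)(x)$ is decreasing with respect to inclusion and the subspaces $L\in\mathcal F(x)$ with $\bar x\in L$ form a cofinal subfamily, it suffices to argue on such $L$ (cf. Remark \ref{remaknumerabilidad}).

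The crux, and the main technical obstacle, is to show that for each such $L$ and each $t\in T_0$ the relative-interior membership survives the restriction to $L$, namely $\bar x\in\ri_{\aff(\dom f_t\cap L)}(\dom f_t\cap L)$; together with $\bar x\in L\cap\dom I_f\subseteq\aff(L\cap\dom I_f)$ this is precisely \ref{QC0} for the reduced data on the finite-dimensional space $L$. I would obtain this from the continuity hypothesis rather than from a relative-interior-of-intersection argument, which is delicate in infinite dimensions: since $f_t$ is continuous relative to $\aff(\dom f_t)$ at $\bar x$, its restriction $f_t|_L\in\Gamma_0(L)$ is continuous relative to the smaller affine set $\aff(\dom f_t\cap L)$ at $\bar x$; and in the finite-dimensional space $L$ a proper lsc convex function can be continuous relative to the affine hull of its domain only at points of the relative interior of that domain (every relative neighbourhood in $\aff(\dom f_t\cap L)$ of a relative-boundary point of $\dom f_t\cap L$ contains points where the value is $+\infty$). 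Hence $\bar x\in\ri_{\aff(\dom f_t\cap L)}(\dom f_t\cap L)$, so \ref{QC0} holds and \eqref{eq2:corollaryexactsum} yields part (i).

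For part (ii), continuity is now with respect to the ambient topology, so a closure-free sum rule is available for the $T_0$-block. I would choose $\bar x\in\bigcap_{t\in T_0}\inte(\dom f_t)\cap\bigcap_{t\in T_0^c}\dom f_t$ and put $g:=\sum_{t\in T_0}f_t$ and $h:=\sum_{t\in T_0^c}f_t$. Each $f_t$ with $t\in T_0$ is continuous at $\bar x\in\inte(\dom f_t)$, so $g$ is continuous at $\bar x$ and $\bar x\in\inte(\dom g)\cap\dom h$. Iterating the classical Moreau--Rockafellar sum rule over the continuous summands gives $\partial g(x)=\sum_{t\in T_0}\partial f_t(x)$, and a further application (with $g$ continuous at $\bar x\in\dom h$) gives $\partial\big(\sum_{t\in T}f_t\big)(x)=\partial(g+h)(x)=\partial g(x)+\partial h(x)$. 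Finally, the general Hiriart-Urruty--Phelps formula, equivalently \eqref{eq2:corollaryexactsum} with empty $T_0$ applied to the subfamily $\{f_t\}_{t\in T_0^c}$ (see \cite[Theorem 3.1]{MR1330645}), yields $\partial h(x)=\bigcap_{\epsilon>0}\cl^{w^*}\{\sum_{t\in T_0^c}\partial_\epsilon f_t(x)\}$; substituting proves part (ii). This part is routine, the only point requiring care being that $\inte(\dom f_t)\neq\emptyset$ makes the continuity genuinely topological, so that no closure operation is needed on the $T_0$-block.
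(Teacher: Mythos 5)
Your treatment of part (ii) is correct and in fact more transparent than the paper's: you split off the $T_0$-block with the Moreau--Rockafellar exact sum rule (legitimate, since $\sum_{t\in T_0}f_t$ is continuous at $\bar x\in\bigcap_{t\in T_0^c}\dom f_t$) and then apply the Hiriart-Urruty--Phelps formula to the remaining block, whereas the paper obtains (ii) from (i) ``by arguing as in Lemma~\ref{lemmacontinuity}''. The problem is part (i). Corollary~\ref{corollaryexactsum} requires one of the conditions \ref{QC0}--\ref{QC4.0} to hold for the data \emph{on $X$}, because what it actually uses (Remark~\ref{remcon}) is the exact sum rule $\partial_{\epsilon}(f_{t}+\delta_{\aff(L\cap\dom I_{f})})(x)=\partial_{\epsilon}f_{t}(x)+\partial\delta_{\aff(L\cap\dom I_{f})}(x)$, an identity between subsets of $X^{\ast}$; condition \ref{QC0} is only available when $X$ itself is $\mathbb{R}^{n}$. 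What you verify --- that $\bar x\in\ri_{\aff(\dom f_{t}\cap L)}(\dom f_{t}\cap L)$, i.e.\ \ref{QC0} for the \emph{restricted} data $f_{t}|_{L}$ --- only yields the corresponding sum rule in $L^{\ast}$, and that does not plug into the hypotheses of Corollary~\ref{corollaryexactsum} as stated.

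The gap is not merely formal. Lifting the $L^{\ast}$-identity to $X^{\ast}$ amounts to replacing $\partial_{\epsilon}(f_{t}+\delta_{L})(x)$ by $\partial_{\epsilon}f_{t}(x)+L^{\perp}$, which is again a sum rule in $X^{\ast}$ and is exactly what the qualification condition is supposed to provide; for instance, if $\partial f_{t_{0}}(x)=\emptyset$ for some $t_{0}\in T_{0}$ while $\partial(f_{t_{0}}|_{L})(x)\neq\emptyset$, the right-hand side of \eqref{eq2:corollaryexactsum} is empty and the inclusion ``$\subseteq$'' is not justified by your argument. The continuity of $f_{t}$ relative to $\aff(\dom f_{t})$ is the hypothesis that rules this out, and the paper uses it head-on: Lemma~\ref{lemmacontinuity}, applied with $g=f_{t}$ and the affine set $\aff(\dom I_{f}\cap L)$ (choosing $L\ni\bar x$, as you do), gives the exact conjugate formula \eqref{QC4}, i.e.\ condition \ref{QC4.0}, after which Corollary~\ref{corollaryexactsum} applies. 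You use the continuity hypothesis only to deduce the finite-dimensional relative-interior membership, which is strictly weaker; rerouting your argument for (i) through Lemma~\ref{lemmacontinuity} closes the gap.
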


\begin{proof}
	Let $f:=\sum_{t\in T} f_t$. Fix $t \in T_0$ and consider $L \in \mathcal{F}(x)$. Then, by applying  Lemma \ref{lemmacontinuity} with $g=f_t$  and  $L=\aff(\dom f \cap L)$,  we ensure the validity of condition \eqref{QC4}. Therefore
	statement (i) follows by\ applying\  Corollary \ref{corollaryexactsum}. Statement (ii) follows from (i) by arguing as in	Lemma \ref{lemmacontinuity}.\qed
\end{proof}

\section{Suslin spaces or discrete measure space\label{SUSLINSPACES}}

In this section, we give more sharp characterizations of the $\varepsilon $%
-subdifferential of $I_{f}$ under the cases where either $X,$ $X^{\ast }$
are Suslin spaces, or $(T,\Sigma )=(\mathbb{N},\mathcal{P}(\mathbb{N}))$.
These settings, indeed, permit the use of measurable selection theorems, which
give us more control over the integration of the multifunctions $\partial
_{\varepsilon (t)}f_{t}(x)$ and $N_{\dom I_{f}\cap L}^{\epsilon
}(x). $ We recall that $f:T\times X\rightarrow \mathbb{R\cup \{+\infty \}}$
is a given normal convex integrand, and $(T,\Sigma ,\mu )$ is a nonnegative complete $%
\sigma $-finite measure space. The function $I_{f}:X\rightarrow \mathbb{%
	R\cup \{+\infty \}}$ is defined as 
\begin{equation*}
	I_{f}(x)=\int_{T}f(t,x)d\mu (t).
\end{equation*}%
The following corollary rewrites  the characterization given in\
Theorem \ref{teorema2} by using only the $\varepsilon $-subdifferential of
the $f_{t}$'s.

\begin{theorem}
	\label{corollary1}We suppose\ that either $X$ and $X^{\ast }$ are Suslin
	spaces or $(T,\Sigma )=(\mathbb{N},\mathcal{P}(\mathbb{N}))$. Then for every 
	$x\in X$ and $\varepsilon \geq 0$ we have%
	\begin{equation*}
		\partial _{\epsilon }I_{f}(x)=\bigcap\limits_{L\in \mathcal{F}%
			(x)}\hspace*{-0.1cm} \bigcup\limits_{\substack{ \epsilon _{1},\epsilon _{2}\geq 0  \\ %
				\epsilon =\epsilon _{1}+\epsilon _{2}  \\ \ell \in \mathcal{I}(\epsilon
				_{1})}}\bigcap\limits_{\eta \in
			L^{1}(T,\mathbb{R}^*_+)} \hspace*{-0.2cm} \cl^{\beta (X^{\ast },X)}  \left\{ \int_{T}\left( \partial
		_{\ell (t)+\eta (t)}f_{t}(x)+N_{ \dom  I_{f}\cap L}^{\epsilon
			_{2}}(x)\right) d\mu (t)\right\}.
	\end{equation*}%
	%  Moreover if for every $\epsilon >0$ $\displaystyle \int\limits_{T} \sub_\varepsilon f_t(x) \neq \emptyset$ then
	% $$\sub I_f(x)=\bigcap\limits_{\substack{\epsilon >0\\L \in \mathcal{F}(x) }} \cl\left\{ \displaystyle %\int\limits_{T}  \sub_\varepsilon f_t(x) d \mu(t)  +  N_{\dom I_f \cap L }(x)\right\}.$$
\end{theorem}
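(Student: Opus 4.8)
The plan is to obtain Theorem \ref{corollary1} by rewriting the characterization already established in Theorem \ref{teorema2}, replacing the integrand $\partial_{\ell(t)}(f_t+\delta_{\aff\{L\cap\dom I_f\}})(x)$ appearing in \eqref{formula2.1} by an expression built only from the sets $\partial_{(\cdot)}f_t(x)$ and the normal set $N_{\dom I_f\cap L}^{\epsilon_2}(x)$. The engine is the pointwise Hiriart-Urruty--Phelps sum rule: since $f_t,\delta_{\aff\{L\cap\dom I_f\}}\in\Gamma_0(X)$ and $\aff\{L\cap\dom I_f\}$ is a finite-dimensional affine subspace through $x$ (so that $\partial_s\delta_{\aff\{L\cap\dom I_f\}}(x)=(\aff\{L\cap\dom I_f\}-x)^{\perp}$ for all $s\ge0$), one has for each $t$
$$\partial_{\ell(t)}\big(f_t+\delta_{\aff\{L\cap\dom I_f\}}\big)(x)=\bigcap_{s>0}\cl\Big(\partial_{\ell(t)+s}f_t(x)+\big(\aff\{L\cap\dom I_f\}-x\big)^{\perp}\Big),$$
together with the inclusion $(\aff\{L\cap\dom I_f\}-x)^{\perp}\subseteq N_{\dom I_f\cap L}^{\epsilon_2}(x)$, valid because $\dom I_f\cap L\subseteq\aff\{L\cap\dom I_f\}$. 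The per-point enlargement $s$ will eventually become the family $\eta(\cdot)\in L^1(T,\mathbb{R}^*_+)$, and the pointwise closures will be consolidated into the single strong closure $\cl^{\beta(X^{\ast},X)}$.

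For the inclusion ``$\subseteq$'', I would fix $x^{\ast}\in\partial_{\varepsilon}I_f(x)$ and $L\in\mathcal{F}(x)$; Theorem \ref{teorema2} supplies $\epsilon_1,\epsilon_2,\ell$ and an integrable selection $z^{\ast}(\cdot)$ of $\partial_{\ell(\cdot)}(f_\cdot+\delta_{\aff\{L\cap\dom I_f\}})(x)$ with $x^{\ast}=\int_T z^{\ast}\,d\mu+w^{\ast}$, $w^{\ast}\in N_{\dom I_f\cap L}^{\epsilon_2}(x)$. Fixing $\eta\in L^1(T,\mathbb{R}^*_+)$, the displayed rule lets me write $z^{\ast}(t)$ as a limit of $a^{\ast}(t)+b^{\ast}(t)$ with $a^{\ast}(t)\in\partial_{\ell(t)+\eta(t)}f_t(x)$ and $b^{\ast}(t)\in(\aff\{L\cap\dom I_f\}-x)^{\perp}$. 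The decisive technical step is to carry out this decomposition \emph{measurably and integrably}: in the Suslin case I would invoke the measurable selection theorem (Proposition \ref{measurableselectiontheorem}) applied to the constraint multifunction whose measurability is guaranteed by Lemma \ref{medibilidadgrafo}, transferring integrability through Lemma \ref{lemma2} (in the case $(T,\Sigma)=(\mathbb{N},\mathcal{P}(\mathbb{N}))$ the selections are chosen coordinatewise). Integrating, and absorbing $\int_T b^{\ast}\,d\mu+w^{\ast}$ into $N_{\dom I_f\cap L}^{\epsilon_2}(x)$ --- using $(\aff\{L\cap\dom I_f\}-x)^{\perp}+N_{\dom I_f\cap L}^{\epsilon_2}(x)=N_{\dom I_f\cap L}^{\epsilon_2}(x)$ and placing $w^{\ast}$ on a finite-measure piece (by $\sigma$-finiteness) so that it stays pointwise in $N^{\epsilon_2}$ --- yields $x^{\ast}\in\cl^{\beta(X^{\ast},X)}\int_T\big(\partial_{\ell+\eta}f_t(x)+N_{\dom I_f\cap L}^{\epsilon_2}(x)\big)\,d\mu$ for every $\eta$, hence membership in the right-hand side.

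For the reverse inclusion I would test an element $x^{\ast}$ of the right-hand side against a fixed $y\in\dom I_f$, choosing an increasing family $L\ni x,y$ and the associated decomposition; integrating the subgradient inequalities of $\partial_{\ell+\eta}f_t(x)$ and of $N_{\dom I_f\cap L}^{\epsilon_2}(x)$ bounds $\langle x^{\ast},y-x\rangle$, and letting $\int_T\eta\,d\mu\downarrow0$ removes the $\eta$-contribution. The point requiring care is that naive integration of the constant $\varepsilon$-normal set inside the Aumann integral would overspend the budget; this excess is eliminated exactly through the intersection over the finite-dimensional subspaces $L$, by the recession-cone absorption already exploited in Corollary \ref{corollaryexactsum} (cf.\ \cite[Lemma 11]{MR2448918}): as $L$ grows, $N_{\dom I_f\cap L}^{\epsilon_2}(x)$ shrinks and only the genuine normal directions are recovered in the strong closure.

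The main obstacle I anticipate is precisely this reconciliation under integration. The pointwise sum rule carries both a closure and a per-point enlargement $s=s(t)$, and these must be converted simultaneously into one $\cl^{\beta(X^{\ast},X)}$ and the family $\eta(\cdot)$, while the constant budget $\epsilon_2$ (rather than $\mu(T)\epsilon_2$) is made to survive the intersection over $L$. Turning the approximate pointwise decompositions into measurable integrable selections, so that the pointwise identities can be integrated and the errors controlled uniformly on $\beta$-bounded sets via their support functions and Lemma \ref{lemma2}, is where the Suslin (or discrete) hypothesis is indispensable, through Proposition \ref{measurableselectiontheorem} and Lemma \ref{medibilidadgrafo}.
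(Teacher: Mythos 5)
Your proposal follows essentially the same route as the paper: starting from Theorem \ref{teorema2}, applying the Hiriart-Urruty--Phelps sum rule pointwise to split $\partial_{\ell(t)}(f_t+\delta_{\aff\{L\cap\dom I_f\}})(x)$ into $\partial_{\ell(t)+\eta(t)}f_t(x)$ plus the annihilator plus a small error, making that splitting measurable via Lemma \ref{medibilidadgrafo} and Proposition \ref{measurableselectiontheorem}, and controlling the integrated error on $\beta$-bounded sets through Lemma \ref{lemma2}; the paper's only concrete refinement of your sketch is to use the neighborhood form of the sum rule with error sets $U(t)$ scaled by an integrable $\gamma$ satisfying $\int_T\gamma\,d\mu\leq\delta$, which turns the pointwise closures directly into the single $\cl^{\beta(X^{\ast},X)}$. (The paper treats the reverse inclusion as immediate rather than via the recession-cone argument you outline, but otherwise your plan matches its proof.)
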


\begin{proof}
	We only need to prove the inclusion  ``$\subseteq$"  in which we suppose that $%
	x=0.$\ Take $F\in \mathcal{F}(0),$ $\epsilon >0$ and $$L:=\spn\{F\cap 
	\dom  I_{f}\}\,\,(=\aff\{F\cap  \dom  I_{f}\}),$$ say\ $L=\spn%
	\{e_{i}\}_{1}^{p}$, with\ $\{e_{i}\}_{1}^{p}$ being\ linearly independent, and $\co\{\pm e_{i}\}_{i=1}^{p}$ is the unit closed ball in $L$ with
	respect to a norm $\Vert \cdot \Vert _{L}$ (on $L).$ Let\ $P:X\rightarrow L$
	be a continuous projection and let $W\in \mathcal{N}_{0}$ be as in
	Lemma \ref{lemma2}. Given $\delta >0,$ we pick an integrable function $\gamma
	:T\rightarrow (0,+\infty )$ such that 
	\begin{equation}
		\int_{T}\gamma (t)d\mu \leq \delta ,  \label{p}
	\end{equation}%
	and\ define the measurable multifunctions $U,$ $V:T\rightrightarrows L^{\ast
	}$ as%
	\begin{align*}
		U(t):=&\{x^{\ast }\in X^{\ast }:|\langle x^{\ast },e_{i}\rangle |\leq \gamma
		(t),\;i=1,...,p\},\\V(t):=&\{x^{\ast }\in L^{\ast }:|\langle x^{\ast
		},e_{i}\rangle |\leq \gamma (t),\;i=1,...,p\}.
	\end{align*}%
	Now, take\ $x^{\ast }\in \partial _{\varepsilon }I_{f}(0)$ and fix\ a
	positive measurable function $\eta.$ By formula \eqref{formula2.1} in Theorem \ref{teorema2} there exist $\epsilon _{1},\epsilon _{2}\geq 0$ with $%
	\epsilon _{1}+\epsilon _{2}=\epsilon $, $\ell \in \mathcal{I}(\epsilon _{1})$%
	, an integrable selection $x_{L,\epsilon }^{\ast }(\cdot )$ of the
	multifunction $t\rightarrow \partial _{\ell (t)}\left( f_{t}+\delta
	_{L}\right) (0),$ and $\lambda ^{\ast }\in N_{ \dom  I_{f}\cap
		L}^{\epsilon _{2}}(0)$ such that $$x^{\ast }=\int_{T}x_{L,\epsilon }^{\ast
	}(t)d\mu (t)+\lambda ^{\ast }.$$ Since $U(t)\in \mathcal{N}_{0}$, by \cite[%
	Theorems 3.1 and 3.2]{MR1330645} we have that, for ae $t\in T,$ 
	\begin{eqnarray*}
		x_{L,\epsilon }^{\ast }(t)\in \partial _{\ell (t)}\left( f_{t}+\delta
		_{L}\right) (0) &\subset &\partial _{\ell (t)+\eta (t)}f_{t}(0)+L^{\perp
		}+U(t) \\
		&\subset &\partial _{\ell (t)+\eta (t)}f_{t}(0)+N_{ \dom  I_{f}\cap
			L}(0)+P^{\ast }(V(t)).
	\end{eqnarray*}%
	We define the multifunction $G:T\rightrightarrows X^{\ast }\times X^{\ast
	}\times L^{\ast }$ as\ 
	\begin{equation*}
		(y^{\ast },w^{\ast },v^{\ast })\in G(t)\Leftrightarrow \left\{ 
		\begin{array}{c}
			y^{\ast }\in \partial _{\ell (t)+\eta (t)}f(t,0),\;w^{\ast }\in N_{{%
					\dom}I_{f}\cap L}(0),\;\text{and }v^{\ast }\in V(t), \\ 
			x_{L,\epsilon }^{\ast }(t)=y^{\ast }+w^{\ast }+P^{\ast }(v^{\ast }).%
		\end{array}%
		\right.
	\end{equation*}%
	If $X,X^{\ast }$ are Suslin spaces, then, by  Lemma \ref{medibilidadgrafo}, $G$
	is measurable, and so, by\  Proposition \ref{measurableselectiontheorem} it
	admits a measurable selection $(y^{\ast }(\cdot ),w^{\ast }(\cdot ),v^{\ast
	}(\cdot )).$ This also obviously holds when $(T,\Sigma )=(\mathbb{N},%
	\mathcal{P}(\mathbb{N})).$ Thus, by  Lemma \ref{lemma2} the\ function\ $%
	u^{\ast }(t):=v^{\ast }(t)\circ P$ is integrable and we get\ 
	\begin{equation*}
		\sigma _{W}(u^{\ast }(t))\leq \max\limits_{i=1,...,p}\langle v^{\ast
		}(t),e_{i}\rangle \leq \gamma (t)\text{ for ae }t\in T.
	\end{equation*}%
	Consequently, the function $y^{\ast }+w^{\ast }=x_{L,\epsilon }^{\ast
	}(\cdot )-u^{\ast }(\cdot )$ is strongly integrable and we have (recall \eqref{p}) 
	\begin{eqnarray*}
		\sigma _{W}(x^{\ast }-\int_{T}(y^{\ast }(t)+w^{\ast }(t))d\mu
		(t)-\lambda ^{\ast }) &=&\sigma _{W}(\int_{T}x_{L,\epsilon }^{\ast }(t)d\mu
		(t)-\int_{T}(y^{\ast }(t)+w^{\ast }(t))d\mu (t)) \\
		&=&\sigma _{W}(\int_{T}u^{\ast }(t)d\mu (t)) \\
		&\leq &\int_{T}\sigma _{W}(u^{\ast }(t))d\mu (t) \\
		&\leq &\int_{T}\gamma (t)d\mu \leq \delta ;
	\end{eqnarray*}%
	that is,%
	\begin{equation*}
		x^{\ast }-\int_{T}(y^{\ast }(t)+w^{\ast }(t))d\mu (t)-\lambda ^{\ast
		}\in \delta W^{\circ },
	\end{equation*}%
	and, due to the arbitrariness of $\delta $, 
	\begin{equation*}
		x^{\ast }\in  \cl^{\beta (X^{\ast
			},X)}\left(\int_{T}\left( \partial_{\ell (t)+\eta (t)}f_{t}(0)+N_{	\dom}I_{f}\cap L (0)\right) d\mu (t)+N_{ \dom  I_{f}\cap L}^{\epsilon_{2}}(0)\right).
	\end{equation*}%
	Finally, since $N_{ \dom  I_{f}\cap L}^{\epsilon _{2}}(0)\subset
	\int_{T}N_{ \dom  I_{f}\cap L}^{\epsilon _{2}}(0)d\mu (t)$ we
	conclude that 
	\begin{equation*}
		x^{\ast }\in  \cl  \nolimits^{\beta (X^{\ast
			},X)}\int_{T}\left( \partial _{\ell (t)+\eta (t)}f_{t}(0)+N_{{
				\dom}I_{f}\cap L}^{\epsilon _{2}}(0)\right) d\mu (t).
	\end{equation*}\qed
\end{proof}

The next result is a finite-dimensional-like characterization of the
subdifferential of $I_{f}.$ Recall that a closed affine subspace\ $A\subset
X $ is said to have a continuous projection if there exists an affine continuous
projection from $X$ to $A$, or equivalently, if there exists a continuous linear projection from $X$ to $A-x_0$, where $x_0\in A$.

\begin{theorem}\label{theoremfinitedimension}
	Let\ $X$, $X^{\ast }$ and $T$ be as in  Theorem \ref{corollary1}. If $%
	I_{f}$ is continuous on $\ri({\dom}I_{f})\neq \emptyset $ and $%
	\overline{\aff}(${${\dom}$}${I_{f}})$ has a continuous projection,
	then 
	\begin{equation}
		\partial I_{f}(x)=\bigcap\limits_{\eta \in L^{1}(T,\mathbb{R}^*_+)}{\cl}^{w^{\ast }}\left\{ (w)\text{-}\int_{T}\left( \partial
		_{\eta (t)}f_{t}(x)+N_{{\dom}I_{f}}(x)\right) d\mu (t)\right\} .\label{newone}
	\end{equation}
	
\end{theorem}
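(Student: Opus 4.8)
The plan is to establish the two inclusions separately, after harmless normalizations. If $\partial I_f(x)=\emptyset$ the identity is trivial, so assume $\partial I_f(x)\neq\emptyset$, whence $x\in\dom I_f$; translating, suppose $x=0$. Set $A:=\overline{\aff}(\dom I_f)$. The hypothesis $\ri(\dom I_f)\neq\emptyset$ forces $\aff(\dom I_f)=A$ to be closed (a subspace through $0$), and by assumption $A$ carries a continuous linear projection. Fix once and for all a continuity point $\bar x\in\ri(\dom I_f)$.

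For the inclusion ``$\supseteq$'', take $x^*$ in the right-hand side of \eqref{newone} and fix $\eta\in L^1(T,\mathbb{R}^*_+)$. Then $x^*$ is a $w^*$-limit of weak integrals $(w)\text{-}\int_T(a(t)+n(t))\,d\mu$ with $a(t)\in\partial_{\eta(t)}f_t(0)$ and $n(t)\in N_{\dom I_f}(0)$. For any $y\in\dom I_f$ the subgradient inequalities give $\langle a(t),y\rangle\le f_t(y)-f_t(0)+\eta(t)$ pointwise and $\langle n(t),y\rangle\le0$; integrating and passing to the $w^*$-limit yields the closed inequality $\langle x^*,y\rangle\le I_f(y)-I_f(0)+\int_T\eta\,d\mu$. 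Intersecting over $\eta$ (so that $\int_T\eta\,d\mu\downarrow0$) gives $\langle x^*,y\rangle\le I_f(y)-I_f(0)$ for all $y$, i.e. $x^*\in\partial I_f(0)$.

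For the inclusion ``$\subseteq$'', I would start from Theorem \ref{corollary1} with $\varepsilon=0$, where the union collapses ($\epsilon_1=\epsilon_2=0$, $\ell\equiv0$), giving $\partial I_f(0)=\bigcap_\eta\bigcap_{L\in\mathcal F(0)}\cl^{\beta(X^*,X)}\{\int_T(\partial_{\eta(t)}f_t(0)+N_{\dom I_f\cap L}(0))\,d\mu\}$. Next I would exploit the continuity of $I_f$ relative to $A$: applying Lemma \ref{lemmacontinuity} to $g=\delta_{\cl\dom I_f}$ (continuous relative to $\aff(\dom I_f)=A$ at $\bar x$) for every finite-dimensional $L\ni\bar x$ yields the normal-cone sum rule $N_{\dom I_f\cap L}(0)=N_{\dom I_f}(0)+L^\perp$. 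Substituting this and pulling the constant subspace $L^\perp$ out of the Aumann integral (a routine measurable-selection splitting, as in Lemma \ref{medibilidadgrafo} and Proposition \ref{measurableselectiontheorem}, using $\int_T L^\perp\,d\mu=L^\perp$) turns the inner set into $S_\eta+L^\perp$, where $S_\eta:=\int_T(\partial_{\eta(t)}f_t(0)+N_{\dom I_f}(0))\,d\mu$. Finally, restricting the intersection to the cofinal family $\{L\in\mathcal F(0):L\ni\bar x\}$ and invoking the purely topological identity $\bigcap_{L}\cl^{\beta(X^*,X)}(S+L^\perp)=\cl^{w^*}(S)$ — valid for any $S\subseteq X^*$, proved by passing to the finite-dimensional quotients $X^*/L^\perp$ on which all vector topologies coincide — collapses the $\beta$-closure together with the intersection over $L$ into a single $w^*$-closure, placing $x^*\in\cl^{w^*}(S_\eta)$. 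Since the strong Aumann integral $S_\eta$ is contained in the weak one, this gives membership in the right-hand side of \eqref{newone} for every $\eta$.

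The decisive step, and the main obstacle, is converting $N_{\dom I_f\cap L}(0)$ into $N_{\dom I_f}(0)+L^\perp$ in this locally convex setting: this is precisely where both hypotheses enter, the continuity of $I_f$ on $\ri(\dom I_f)$ feeding Lemma \ref{lemmacontinuity}, and the continuous projection onto $A$ guaranteeing that $\aff(\dom I_f)$ is closed and complemented so that the reduction and the finite-dimensional quotient argument are legitimate. The second delicate point is the clean factorization of $L^\perp$, and of the constant cone $N_{\dom I_f}(0)$, out of the Aumann integral, which relies on the Suslin (or discrete) measurable-selection apparatus already used in the proof of Theorem \ref{corollary1}.
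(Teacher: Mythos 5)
Your ``$\supseteq$'' direction is fine, and your overall strategy for ``$\subseteq$'' (start from Theorem \ref{corollary1} with $\varepsilon=0$, convert $N_{\dom I_{f}\cap L}$ into $N_{\dom I_{f}}+L^{\perp}$, and collapse $\bigcap_{L}\cl^{\beta}(\,\cdot\,+L^{\perp})$ into a single $w^{*}$-closure) is the same skeleton the paper uses. But there is a genuine gap at the step you dismiss as ``a routine measurable-selection splitting'': you need the identity
\begin{equation*}
\int_{T}\bigl(\partial_{\eta(t)}f_{t}(0)+N_{\dom I_{f}}(0)+L^{\perp}\bigr)\,d\mu
\;=\;\int_{T}\bigl(\partial_{\eta(t)}f_{t}(0)+N_{\dom I_{f}}(0)\bigr)\,d\mu+L^{\perp},
\end{equation*}
and this is exactly what does not follow from measurable selections alone. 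An Aumann integral is the set of integrals of \emph{integrable} selections; a measurable selection theorem gives you a measurable decomposition $y^{*}(t)=z^{*}(t)+l^{*}(t)$ with $z^{*}(t)\in\partial_{\eta(t)}f_{t}(0)+N_{\dom I_{f}}(0)$ and $l^{*}(t)\in L^{\perp}$, but provides no control whatsoever on the integrability of $z^{*}$, so $\int_{T}z^{*}\,d\mu$ need not be defined and the splitting of the integral is meaningless. (Compare Example \ref{ejemploimportante}: the smaller multifunction can fail to admit any integrable selection even when the larger one does.) This is precisely the difficulty the paper's proof is built around: it first arranges, via the $\gamma$-controlled neighborhoods $W(t)$, that $y^{*}(t)-z^{*}(t)\in W(t)$ with $\sigma_{W}(y^{*}(t)-z^{*}(t))\le\epsilon(t)$ integrable, and then replaces $z^{*}$ by $z^{*}\circ P$, where $P$ is the continuous projection onto $F=\overline{\aff}(\dom I_{f})$, proving that $z^{*}\circ P$ is weakly integrable through the estimate $\langle z^{*}(t)\circ P,y\rangle\le f(t,x_{0}+P(y))-f(t,0)+\eta(t)+\epsilon(t)+|\langle y^{*}(t),x_{0}\rangle|$, which is where the continuity of $I_{f}$ at $x_{0}\in\ri(\dom I_{f})$ (i.e., the integrability of $f(\cdot,x_{0}+P(y))$ for $y$ in a neighborhood of $0$) is actually consumed. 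In your write-up the two hypotheses are only used to justify the normal-cone identity and a closedness/complementation remark, so the place where they do the real work is missing.

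Two smaller points. First, the paper only establishes $N_{\dom I_{f}\cap L}(0)=\cl^{w^{*}}\bigl(L^{\perp}+N_{\dom I_{f}}(0)\bigr)$ (its equation \eqref{equation:doce}), not the exact sum you assert; your argument via Lemma \ref{lemmacontinuity} applied to $g=\delta_{\cl\dom I_{f}}$ can be made to yield exactness, but you should say why $g$ is continuous relative to $\aff(\dom g)$ at a point of $\dom g\cap L$ and why $L$ can be restricted to contain $\bar x$. Second, the conclusion of Theorem \ref{theoremfinitedimension} involves the \emph{weak} Aumann integral for a reason: the selection ultimately produced ($z^{*}\circ P$ in the paper) is only $w^{*}$-integrable, and one must separately check that its weak integral lands in $X^{*}$ (the paper does this via the uniform bound \eqref{in}); your set $S_{\eta}$, if read as the strong integral, may be empty where the weak one is not, so ``the strong integral is contained in the weak one'' does not rescue the argument.
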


\begin{proof}
	Because the inclusion  ``$\subseteq$"  is immediate\ we only need to prove the
	other inclusion ``$\subseteq$" when\ $x=0;$ hence,\ $F:=\overline{\aff}(${$%
		{\dom}$}${I_{f}})$ is a closed subspace of $X.$ Let $x^{\ast }\in
	\partial I_{f}(0)$, $\eta \in L^{1}(T,(0,+\infty )),$ and $V:=\{h^{\ast }\in
	X^{\ast }:|\langle h^{\ast },e_{i}\rangle |\leq 1,\;\;i=1,...,p\}$ for some $%
	\{e_{i}\}_{i=1}^{p}\subset X$. By the current assumption, we take $x_{0}\in %
	\ri({\dom}I_{f})$ and a continuous projection $P:X\rightarrow F$.
	Define $L=\spn\{e_{i},P(e_{i}),x_{0}\}_{i=1}^{p}$ and $$W(t):=\{h^{\ast }\in
	X^{\ast }:\max \{|\langle h^{\ast },e_{i}\rangle |,|\langle h^{\ast
	},P(e_{i})\rangle |,|\langle h^{\ast },x_{0}\rangle |\}\leq \epsilon (t),
	i=1,...,p\},$$ where $\epsilon (\cdot )$ is any positive integrable function
	with values on $(0,1)$ and $\int_{T}\epsilon d\mu \leq 1/2$. Then $$L^{\perp
	}+W(t)\subseteq W(t)\subseteq V.$$ Because $L\cap \ri({\dom}I_{f})\neq
	\emptyset $, we have (see, e.g.,  Corollary \ref{qualfin}) 
	\begin{equation}
		N_{{\dom}I_{f}\cap L}(0)={\cl}^{w^{\ast }}\big(L^{\perp }+N_{%
			{\dom}I_{f}}(0)\big).  \label{equation:doce}
	\end{equation}%
	By  Theorem \ref{corollary1} there exists a (strong) integrable selection $%
	y^{\ast }(t)\in \partial _{\eta (t)}f_{t}(0)+N_{{\dom}I_{f}\cap
		L}(0)\subset \partial _{\eta (t)}f_{t}(0)+N_{{\dom}I_{f}}(0)+W(t),$
	due to \eqref{equation:doce}, such that 
	\begin{equation}
		x^{\ast }-\int_{T}y^{\ast }d\mu \in V.  \label{ss}
	\end{equation}%
	Also, by the measurability of multifunctions $\partial _{\eta (\cdot
		)}f_{\cdot }(0),$ $N_{{\dom}I_{f}}(0),$ and $W(\cdot )$ (see, e.g., 
	\cite{MR0372610}), there exists a (weakly) measurable selection $z^{\ast
	}(\cdot )$ of $\partial _{\eta (\cdot )}f_{\cdot }(0)+N_{{\dom}%
		I_{f}}(0)$ such that 
		$$y^{\ast }(t)-z^{\ast }(t)\in W(t)\, \text{ for ae }
		$$
		 (the
	existence of such a selection is guaranteed for Suslin spaces by the
	representation theorem of Castaing, while it is straightforward in the
	discrete case). 
	
	Let us verify that the function $z^{\ast }(\cdot )\circ P$
	is weakly integrable: Given $U\in \mathcal{N}_{0}$ such that $%
	x_{0}+P(U)\subset \ri({\dom}I_{f})$ (using the the continuity of $P$)
	we have, for every $y\in U,$ 
	\begin{align}
		\langle z^{\ast }(t)\circ P,y\rangle & =\langle z^{\ast }(t),Py\rangle 
		\notag \\
		& =\langle z^{\ast }(t),x_{0}+Py\rangle -\langle z^{\ast }(t),x_{0}\rangle 
		\notag \\
		& \leq f(t,x_{0}+P(y))-f(t,0)+\eta (t)-\langle z^{\ast }(t),x_{0}\rangle
		+\sigma _{N_{{\dom}I_{f}}(0)}(x_{0}+P(y))  \notag \\
		& \leq f(t,x_{0}+P(y))-f(t,0)+\eta (t)+|\langle z^{\ast }(t)-y^{\ast
		}(t),x_{0}\rangle |+|\langle y^{\ast }(t),x_{0}\rangle |  \notag \\
		& \leq f(t,x_{0}+P(y))-f(t,0)+\eta (t)+\epsilon (t)+|\langle y^{\ast
		}(t),x_{0}\rangle |,  \label{in}
	\end{align}%
	and the weak integrability of $z^{\ast }(\cdot )\circ P$\ follows, as 
	\begin{align*}
		\int_{T}\left\vert \langle z^{\ast }(t)\circ P,y\rangle \right\vert d\mu
		(t)\leq& \int_{T}\left\vert f(t,x_{0}+P(y))\right\vert d\mu
		(t)\\ &+\int_{T}\left\vert f(t,x_{0}-P(y))\right\vert d\mu
		(t)-I_{f}(0)\\&+\int_{T}(\eta (t)+\epsilon (t)+|\langle y^{\ast
		}(t),x_{0}\rangle |)d\mu (t)<+\infty .
	\end{align*}%
	Moreover, \eqref{in} implies that\ $\int_{T}z^{\ast }\circ Pd\mu $ is
	uniformly bounded on a neighborhood of zero, so that $$\int_{T}z^{\ast
	}\circ Pd\mu \in X^{\ast }.$$ Finally, we have $z^{\ast }(t)\circ P=z^{\ast }(t)+z^{\ast }(t)\circ P-z^{\ast }(t)$,  
	\begin{align*}
		z^{\ast }(t) \in
		\partial _{\eta (t)}f_{t}(0)+N_{{\dom}I_{f}}(0) \text{ and }	z^{\ast }(t)\circ P-z^{\ast }(t) \in F^\perp,
	\end{align*}%
	and, consequently,
	\begin{align*}
		z^{\ast }(t)\circ P \in \partial _{\eta (t)}f_{t}(0)+N_{{\dom}I_{f}}(0)	+F^\perp =\partial _{\eta (t)}f_{t}(0)+N_{{\dom}I_{f}}(0)
	\end{align*}
	and (recall \eqref{ss}) 
	\begin{equation*}
		x^{\ast }-\int_{T}z^{\ast }\circ Pd\mu =x^{\ast }-\int_{T}y^{\ast }d\mu
		+\int_{T}(y^{\ast }-z^{\ast })d\mu -\int_{T}(z^{\ast }\circ P-z^{\ast })d\mu
		\in V+V+F^{\perp }.
	\end{equation*}%
	So, observing that $F^{\perp }\subset \int_{T}N_{{\dom}%
		I_{f}}(0)d\mu (t)$,
	\begin{eqnarray*}
		x^{\ast } &\in &(w)-\int_{T}\left( \partial _{\eta (t)}f_{t}(0)+N_{%
			{\dom}I_{f}}(0)\right) d\mu (t)+V+V+F^{\perp } \\
		&\subset &(w)-\int_{T}\left( \partial _{\eta (t)}f_{t}(0)+N_{{
				\dom}I_{f}}(0)\right) d\mu (t)+V+V.
	\end{eqnarray*}%
	Hence, by intersecting over $V$ we get 
	\begin{equation*}
		x^{\ast }\in {\cl}^{w^{\ast }}\left\{ (w)\text{-}\int_{T}%
		\left( \partial _{\eta (t)}f_{t}(x)+N_{{\dom}I_{f}}(x)\right) d\mu
		(t)\right\} ,
	\end{equation*}%
	which gives the desired inclusion due to the arbitrariness of the function $\eta
	.$\qed
\end{proof}
\begin{remark}
	If in Theorem \ref{theoremfinitedimension}, $X$ is a Banach space and, for a given point $\bar{x}$, there exists some sequence $(\eta_n)_n \subset L^1(T,(0,+\infty))$ such that 
	$$
	\int_T \eta (t ) d\mu(t) \to 0,\quad (w)\text{-}\int_{T} \partial
	_{\eta_n (t)}f_{t}(\bar{x})  d\mu (t)\neq \emptyset, \, \forall n\in \N,
	$$ 
	then 
	\begin{align}\label{newone2}
		\partial I_{f}(\bar{x})=\bigcap\limits_{n \in \N }{\cl}^{w^{\ast }}\left\{ (w)\text{-}\int_{T}  \partial
		_{\eta_n (t)}f_{t}(\bar{x})d\mu (t) +N_{{\dom}I_{f}}(\bar{x}) \right\} .
	\end{align}
	Indeed, the inclusion ``$\subseteq$" is straightforward.  Moreover, for each $n \in \N$ we have that  (due to the fact that the integral of $  \partial
	_{\eta_n (\cdot)}f_{\cdot}(\bar{x}) $ and $N_{{\dom}I_{f}}(\bar{x})$ are nonempty) 
	\begin{align*}
		(w)\text{-}\int_{T}\left( \partial
		_{\eta (t)}f_{t}(x)+N_{{\dom}I_{f}}(x)\right) d\mu (t) 
		& \subseteq	{\cl}^{w^{\ast }}\left\{	(w)\text{-}\hspace{-0.1cm}\int_{T}  \partial
		_{\eta_n (t)}f_{t}(\bar{x})d\mu (t) +N_{{\dom}I_{f}}(\bar{x}) \right\}
	\end{align*}
	(see e.g. \cite[Proposition 5.6]{MR1485775}, \cite[Proposition 8.6.2]{MR2458436}). Then, by \eqref{newone}, the left-hand side is included in the right-hand side.
\end{remark}
 
 \begin{corollary}\label{corollaryfinitedimension2}
	Let $(T,\Sigma,\mu)$ be a finite-measure space and 	let $f:T \times \mathbb{R}^n \to \Rex$ be a convex normal integrand. Then, for every $x\in \R^n$,
	
	\begin{align}\label{eqfinal}
		\partial I_{f}(x)=\bigcap\limits_{\eta >0 }\cl\left\{\int_{T}\left( \partial
		_{\eta }f_{t}(x)+N_{{\dom}I_{f}}(x)\right) d\mu (t)\right\} .
	\end{align}
	
\end{corollary}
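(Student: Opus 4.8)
The plan is to obtain \eqref{eqfinal} as the finite-dimensional, finite-measure specialization of Theorem \ref{theoremfinitedimension}. Three things collapse in this setting: in $\R^n$ the weak and strong Aumann integrals coincide and $\cl^{w^*}$ is just $\cl$; all the geometric hypotheses of Theorem \ref{theoremfinitedimension} hold automatically; and, crucially, since $\mu(T)<\infty$ the intersection over the integrable gauges $\eta\in L^1(T,\R^*_+)$ may be cut down to an intersection over positive constants. Throughout I assume $I_{f}$ proper, which is implicit in the statement (and forced by the hypotheses of Theorem \ref{theoremfinitedimension}); if instead $x\notin\dom I_{f}$, then $\partial I_{f}(x)=\emptyset$ and $N_{\dom I_{f}}(x)=\partial\delta_{\dom I_{f}}(x)=\emptyset$, so the integrand multifunction on the right of \eqref{eqfinal} is empty-valued and both sides reduce to $\emptyset$.

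First I would verify the hypotheses of Theorem \ref{theoremfinitedimension}. As $X=X^{\ast}=\R^{n}$ is a separable, hence Suslin, space, the framework of Theorem \ref{corollary1} is in force. Being a proper convex function on $\R^{n}$, $I_{f}$ is continuous on $\ri(\dom I_{f})\neq\emptyset$, and $\overline{\aff}(\dom I_{f})$, an affine subspace of $\R^{n}$, carries a continuous (orthogonal) projection. Hence Theorem \ref{theoremfinitedimension} applies, and after replacing $(w)$-$\int_{T}$ by $\int_{T}$ and $\cl^{w^{\ast}}$ by $\cl$, formula \eqref{newone} reads
\[
\partial I_{f}(x)=\bigcap_{\eta\in L^{1}(T,\R^*_+)}\cl\Big\{\int_{T}\big(\partial_{\eta(t)}f_{t}(x)+N_{\dom I_{f}}(x)\big)\,d\mu(t)\Big\}.
\]

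The inclusion ``$\subseteq$'' in \eqref{eqfinal} is then immediate: since $\mu(T)<\infty$, every constant $\eta>0$ is a function in $L^{1}(T,\R^*_+)$, so the family indexing the right-hand side of \eqref{eqfinal} is a subfamily of the one above, and intersecting over the larger family only shrinks the set. For ``$\supseteq$'' I would argue directly. Fix $\eta>0$ and let $\xi=\int_{T}s\,d\mu$ be any element of the Aumann integral $\int_{T}(\partial_{\eta}f_{t}(x)+N_{\dom I_{f}}(x))\,d\mu$, so that for a.e.\ $t$ one may write $s(t)=m(t)+w(t)$ with $m(t)\in\partial_{\eta}f_{t}(x)$ and $w(t)\in N_{\dom I_{f}}(x)$. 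For every $y\in\dom I_{f}$ the scalar inequalities $\langle m(t),y-x\rangle\le f_{t}(y)-f_{t}(x)+\eta$ and $\langle w(t),y-x\rangle\le 0$ hold a.e.; integrating their sum over $T$ (all terms being integrable as $x,y\in\dom I_{f}$ and $\mu(T)<\infty$) gives $\langle\xi,y-x\rangle\le I_{f}(y)-I_{f}(x)+\eta\mu(T)$, which is trivial for $y\notin\dom I_{f}$. Thus $\int_{T}(\partial_{\eta}f_{t}(x)+N_{\dom I_{f}}(x))\,d\mu\subseteq\partial_{\eta\mu(T)}I_{f}(x)$, and since the latter set is closed the same containment holds for the closure. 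Intersecting over $\eta>0$, and using $\mu(T)<\infty$ together with $\bigcap_{\delta>0}\partial_{\delta}I_{f}(x)=\partial I_{f}(x)$, yields the remaining inclusion.

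The substantive content is entirely carried by Theorem \ref{theoremfinitedimension}; the corollary merely extracts its $\R^{n}$ consequence. I expect the only step needing genuine care to be the ``$\supseteq$'' argument: justifying the pointwise decomposition $s=m+w$ (which however is only used to produce the scalar inequality on $\langle s(t),y-x\rangle$, so no joint measurability of $m,w$ is required) and the interchange of integration with the subgradient inequality, where the finiteness of $\mu$ is precisely what turns the constant gauge $\eta$ into the vanishing error $\eta\mu(T)$. Checking that continuity and the continuous-projection property are automatic is routine finite-dimensional convex analysis.
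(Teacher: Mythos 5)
Your proposal is correct and follows essentially the same route as the paper: the ``$\subseteq$'' inclusion comes from Theorem \ref{theoremfinitedimension} because constant positive functions lie in $L^{1}(T,(0,+\infty))$ when $\mu(T)<\infty$, and the ``$\supseteq$'' inclusion comes from the finiteness of the measure. You merely spell out details the paper leaves implicit (verifying the hypotheses of Theorem \ref{theoremfinitedimension} in $\R^{n}$ and the estimate $\int_{T}(\partial_{\eta}f_{t}(x)+N_{\dom I_{f}}(x))\,d\mu\subseteq\partial_{\eta\mu(T)}I_{f}(x)$), all of which are sound.
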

\begin{proof}
	Since the measure is finite we have that the right-hand side of \eqref{eqfinal} is included in $\partial I_{f}(x)$. Moreover, since the constant positive functions belongs to $L^1(T,(0,+\infty))$, Theorem \ref{theoremfinitedimension} implies that the subdifferential of $I_f$ at $x$  is included in  the  right-hand side of \eqref{eqfinal}.
\qed	
\end{proof}
%\begin{remark}\label{remark2}
%	It is worth mentioning that \eqref{eqfinal} is very similar to the expected formulation \eqref{equationIoffelevin2chapter4}, but with the difference that in this final formula the normal cone of the domain must be inside of the sign of integral. It is explained due to the fact that the set $\int\limits_{T}  \partial
%	_{\eta }f_{t}(x) d\mu(t)$ could be empty, as it was shown in  Example \ref{ejemploimportante}. Nonetheless, under the additional assumption that  $\int\limits_{T}  \partial
%	_{\eta_n }f_{t}(x) d\mu(t)$ is non-empty for some sequence $\eta_n \to 0$ we have that \eqref{equationIoffelevin2chapter4} holds at $x$.
%\end{remark}

\section{Conclusions}
\label{sec:conclusions}
In this work, we presented general characterizations of the $\varepsilon$-subdifferential of the integral functional $I_f$, without any qualification conditions, when defined over a locally convex space (see Theorem \ref{teorema2} for the main result). We used a finite-dimensional reduction approach since there is no theory about measurable selections and integration of multifunctions in non-separable locally convex space. We provided simplifications under qualification conditions of the nominal data (see Corollary \ref{corollaryexactsum}). 
%Our approach allows us to give formulae for the  subdifferential of the sum of convex functions (see Corollaries \ref{corollaryexactsum} and \ref{qualfin}). Moreover, using calculus rules for the $\epsilon$-subdifferential and measurable selection theorems, we presented formulae, which do not involve the use of indicator functions (see  Theorems \ref{corollary1} and  \ref{theoremfinitedimension}). Finally, we included simplification of the main results in finite-dimensional setting, and we discussed and compared our results with the previous results from the literature (see Corollaries \ref{corollaryfinitedimension2} and \ref{corollarylfinitedim} and Remark \ref{remark:corollarylfinitedim}).  

\,

\,

{{\bf{Acknowledgements.}} We very much appreciate the insightful suggestions and helpful comments of the referees, which have contributed to improving the current revision of the manuscript.}

\bibliographystyle{plain}
\bibliography{references}
\end{document}